\newtheorem{theorem}{Theorem}
\newtheorem{lemma}[theorem]{Lemma}
\newtheorem{proposition}{Proposition}
\newtheorem{definition}{Definition}
\theoremstyle{definition}
\theoremstyle{remark}
\newtheorem*{remark}{Remark}
\newcommand{\R}{\mathbbm{R}}
\newcommand{\N}{\mathbbm{N}}
\newcommand{\Z}{\mathbbm{Z}}
\newcommand{\E}{\mathbf{E}}
\newcommand{\V}{\mathcal{V}}
\newcommand{\Sur}{\mathcal{S}}
\newcommand{\either}{\#}
\providecommand{\keywords}[1]
{
	\small	
	\textbf{\textit{Keywords---}} #1
}
\DeclareMathOperator{\Int}{Int}
\DeclareMathOperator{\Type}{Type}
\DeclareMathOperator{\Card}{Card}
\DeclareMathOperator{\Label}{Label}
\newcommand{\norm}[1]{\| #1 \|}
\newcommand{\ceil}[1]{\left \lceil #1 \right \rceil}
\title{Liquid-Gas phase transition for Gibbs point process with Quermass interaction}
\author{D.~Dereudre, C.~Renaud~Chan}
\begin{document}
	
	\maketitle
	
	\begin{abstract}
		We prove the existence of a liquid-gas phase transition for continuous Gibbs point process in $\R^d$ with Quermass interaction. The Hamiltonian we consider is a linear combination of the volume $\mathcal{V}$, the surface measure $\mathcal{S}$ and the Euler-Poincaré characteristic $\chi$ of a halo of particles (i.e. an union of balls centred at the positions of particles). We show the non-uniqueness of infinite volume Gibbs measures for special values of activity and temperature, provided that the temperature is low enough. Moreover we show the non-differentiability of the pressure at these critical points. Our main tool is an adaptation of the Pirogov-Sina\"i-Zahradnik theory for continuous systems with interaction exhibiting a saturation property.  
	\end{abstract}
	
	\keywords{Gibbs measure, DLR equations, Widom-Rowlinson model, Pirogov-Sinaï-Zahradnik theory, cluster expansion}

	\section{Introduction}
	
	Spatial point processes with interaction are models describing the locations of objects, particles in a domain and for which the interaction can be of different nature; attractive, repulsive or a mix of both at different scales of distance between the points. The most popular point process is surely the Poisson point process \cite{last_penrose} which describes random objects without interaction between each other. There are different models for point processes with interaction as for instance Cox point processes \cite{last_penrose}, determinantal point processes \cite{macchi}, zeros of random polynomials or analytic functions \cite{Hough}, Gibbs point processes \cite{MiniCours, ruelle_livre}, etc. Among the field of applications of such models we have plant ecology, telecommunication, astronomy, data science  and statistical physics. 
	
	A large class of point process coming from statistical physics is the family of Gibbs point processes. The finite volume Gibbs point process on a bounded set $\Delta \subset \R^d$ is defined via an unnormalized density, with respect to the Poisson point process in $\Delta$, of the form $z^{N_\Delta} e^{- \beta H}$. The parameters $z$ and $\beta$ are positive numbers (respectively called the activity and the inverse temperature), $N_\Delta$ is the number of points inside $\Delta$ and $H$ an energy function (also called the Hamiltonian). By taking the thermodynamic limit (i.e $\Delta\to \R^d$) we obtain the infinite volume Gibbs point processes which are characterized by the equilibrium equations also known as the Dobrushin-Lanford-Ruelle (DLR) equations.
	
	The type of interaction we study in this paper is the Quermass interaction. In general, the energy function is defined as a linear combination of the $d+1$ Minkowski functionals of the halo of the configuration, which is the union of closed balls centred at the position of the particles with random radii. In this paper, we will consider only the volume, measure of the surface and the Euler-Poincaré characteristic. This type of interaction is a natural extension of the Widom-Rowlinson interaction for penetrable spheres \cite{WidomRowlinson}. Hadwiger's characterization Theorem \cite{Hadwiger} ensures that any functional $F$ on finite unions of convex compact spaces, continuous for the Haussdorff topology, invariant under isometric transformations and additive (i.e. for $A,B \subset \R^d$ such that $A \cup B$ is convex, $F(A \cup B) = F(A) + F(B)- F(A \cap B)$ ) can be written as a linear combination of Minkowski functionals. This representation justifies the study of the Quermass interaction for modelling a large class of morphological interactions. Furthermore, this family of energy function is used to describe the physics of micro-emulsions and complex fluids \cite{likos_emulsion,Mecke_complex_fluid,mecke_integralgeometrie}. The stability condition in the sense of Ruelle has been established in \cite{Kendall} and the existence of the infinite volume Gibbs measures with random radii has been tackled in \cite{dereudre_existence_quermass}. 
	
	In general a phase transition phenomenon occurs when for some special value of $\beta$ and $z$, the boundary conditions at infinity  influence some macroscopic statistics in the bulk of the system. A phase transition phenomenon can also be defined by uniqueness/non-uniqueness of solutions to the DLR equilibrium equations. More specifically we call a liquid-gas or first order phase transition when several solutions have different intensities. The Gibbs measure with the lowest density is associated to the distribution of a pure gas phase and in contrary the highest density to the distribution of a pure liquid phase. 
	
	If we consider models of particles with different spins, there exists an abundance of results on phase transition mainly based on the existence of a dominant spin. We can cite for instance the phase transition results on the continuous Potts model \cite{georgii_hagstrom} and the non symmetrical multiple colour Widom-Rowlinson model \cite{bricmont_kuroda}. Without spin, we need to rely only on the self arrangement, the geometry and the density of particles for proving the phase transition. It is more difficult and there exist only few known results in this setting. The first result of phase transition without spin has been proved for the Widom-Rowlinson model. It is a Gibbs point process where the Hamiltonian is given by  $ H(\omega) = \mathcal{V}(\cup_{x \in \omega} B(x,R))$ where $\mathcal{V}$ is the volume functional. The first proofs are due to Widom-Rowlinson \cite{WidomRowlinson} and Ruelle \cite{RuelleWR} and use extensively the symmetry of the associated two-colour Widom-Rowlinson process. They prove the existence of a critical activity $z_c$ such that for $z>z_c$ and $\beta = z$ the liquid-gas phase transition occurs. Later another proof using Fortuin-Kasteleyn representation has been given \cite{2ChayesKotecky}. Recently, the full phase diagram has been almost entirely mapped by proving an OSSS inequality \cite{HoudebertDereudre}. Outside of a region around the critical point $(z_c, z_c)$, the phase transition occurs if and only if $\beta = z$ for $z > z_c$. Numerical studies have shown that the unmapped region is actually small \cite{Houdebert}. Among continuous models without any form of special symmetry, there is the beautiful result of liquid gas phase transition for an attractive pair and repulsive four body potential of the Kac type \cite{LebowitzMazelPresutti}. They manage to prove, using the Pirogov-Sina\"i-Zahradnik theory (PSZ), the existence of a phase transition for finite but long range interaction compared to typical distance between particles. Indeed the finite range interaction is obtained as a perturbation of the mean field interaction. In a more recent work using a similar strategy, it has been proven that the liquid gas phase transitions persists if a hard-core interaction is considered \cite{Pulvirenti}. Until now, these results were the only proofs of phase transition for continuum systems without spins. 
	
	In the present paper, we are interested in the phase transition phenomenon for the Quermass interaction with the particular form $H(\omega) = \mathcal{V}(\cup_{x \in \omega } B(x,R)) + \theta_1 \mathcal{S}(\cup_{x \in \omega } B(x,R)) - \theta_2 \chi(\cup_{x \in \omega } B(x,R))$ where $\mathcal{S}$ is the measure of the surface and $\chi$ is the Euler-Poincaré characteristic. Contrary to the Widom Rowlinson model we cannot benefit from any symmetry of a two spin process due to the contribution of the surface measure and Euler-Poincaré characteristic. Moreover, in our work we do not use mean-field approximation and the range of interaction is at the same order than the distance between particles. We achieve to prove, for some parameter $\theta_1$ and $\theta_2$, the existence of two infinite Gibbs point processes with distinct intensities when $\beta$ is sufficiently large (i.e. at low temperature) and the activity $z$ equals to a  critical activity $z_\beta^c$. The main tool of our proof is an adaptation of the Pirogov-Sinaï-Zahradnik (PSZ) theory \cite{Zahradnik, PirogovSinai, PirogovSinai2} for continuous systems with interaction satisfying a saturation property. For a modern presentation of the PSZ theory  we advise the lecture of Chapter 7 of \cite{Velenik}. Let us describe succinctly what the saturation property is. First we consider a discretisation of the space $\R^d$ with cubic tiles. Then we introduce two types of "ground states", which are not usual ground states as minimisers of the energy, but rather as extreme idealizations of the point process. One ground state is the empty state and the other corresponds to a dense and homogeneous distribution of particles (i.e. at least one particle in any tile). These states have the property to provide explicit and tractable energy for an extra point emerging in a ground state. That is what we call saturation property.  Using the PSZ theory  we are able to show that for $\beta$ large enough and at the critical activity $z_\beta^c$ the pressure of the model is not differentiable with respect to $z$. Furthermore we construct two different Gibbs measures with the "ground states" as boundary conditions and we make the relation between the intensity of these Gibbs measures and the left and right derivatives of the pressure. We believe that our method is robust enough to deal with other interactions with  similar saturation property. 
	
	Our paper is organized as follows. In Section \ref{Section2}, we introduce the notations, the Quermass interaction and the associated Gibbs point processes. In Section \ref{Section3}, we present the main results of the paper. In Section \ref{Section4}, we develop the tools and we prove the main results. Annex A contains results on Cluster Expansion and Annex B contains the technical proof of Proposition \ref{prop_tau_stability_truncated_weights} at the heart of the PSZ theory.
	
	\tableofcontents

	\section{Quermass interaction model} \label{Section2}
	
	\subsection{State spaces and notations}
	
	We denote by $\mathcal{B}_b(\R^d)$ the set of bounded Borel sets of $\R^d$ with positive Lebesgue measure. For any sets $A$ and $B$ in $\mathcal{B}_b(\R^d)$, $A \oplus B$ stands for the Minkowski sum of these sets; i.e. $A \oplus B := \{x+y, \forall x \in A, \forall y \in B\}$. Let $R_0, R_1$ be two real numbers such that $0<R_0 \leq R_1$.  We denote by $E$ the state space of a single marked point defined as $\R^d \times [R_0, R_1]$. For any $(x,R)\in\E$, the first coordinate $x$ is for the location of the point and the second coordinate $R$ is the mark representing the radius of a ball. For any set $\Delta \in \mathcal{B}_b(\R^d)$, $E_\Delta$ is the local state space $\Delta \times [R_0, R_1]$. A configuration of marked points $\omega$ is a locally finite set in $E$; i.e. $N_\Delta(\omega) := \Card(\omega \cap E_\Delta)$ is finite for any $\Delta \in \mathcal{B}_b(\R^d)$. We denote by $\Omega$ the set of all marked point configurations and by $\Omega_f$ the subset of finite configurations. For any $\omega\in\Omega$, its projection in $\Delta\subset \R^d$ is defined by $\omega_\Delta:=\omega \cap E_\Delta$. As usual we equipped the state space $\Omega$ with the $\sigma$-algebra $\mathcal{F}$ generated by the counting functions on $E$. The halo of a configuration $\omega\in \Omega$ is defined as 
	\begin{equation}
		L(\omega) = \bigcup_{(x,R) \in \omega} B(x,R) 
	\end{equation}
	where $B(x,R)$ is the closed ball centred at  $x$ with radius $R$.
	
	\subsection{Interaction}
	
	Let us introduce the Quermass interaction as in \cite{Kendall}. Usually it is defined as a linear combination of $d+1$ Minkowski functionals of the halo  $L(\omega)$. Here, we consider only the volume $\V$, the surface area $\Sur$ and the Euler-Poincaré characteristic $\chi$ (in dimension $d=2$). This restriction is due to statistical physics considerations as the stability of the energy is required. 
	\begin{definition}
		Let $\theta_1 \in \R$ and $\theta_2 \geq 0$. The energy of a finite configuration $\omega \in \Omega_f$ is given by 
		\begin{align*}
			H(\omega) = \begin{cases}
				\V(L(\omega)) + \theta_1 \Sur(L(\omega)) - \theta_2 \chi(L(\omega)) & \text{if } (d=2), \\
				\V(L(\omega)) + \theta_1 \Sur(L(\omega)) &  \text{if } (d\ge 3),
			\end{cases}
		\end{align*}
		where $\V(L(\omega))$ is the volume of $L(\omega)$ defined as the Lebesgue measure of $L(\omega)$, $\Sur(L(\omega))$ is the surface area of $L(\omega)$ defined as the $d-1$-dimensional Hausdorff measure of the boundary $\partial L(\omega)$ and $\chi(L(\omega))$ is the Euler-Poincaré characteristic of $L(\omega)$ defined as the difference between the number of connected components and the number of holes in $L(\omega)$ (in dimension $d=2$). 
	\end{definition}
	
	The energy is parametrized with two parameters $\theta_1$ and $\theta_2$. We discuss below why we impose $\theta_2$ to be non negative. Note that we do not introduce a third parameter in the front of the volume $\V(L(\omega))$ since it is indirectly given by the inverse temperature $\beta\ge 0$ in the Definition \ref{defGibbs} of Gibbs measures.
	
	With this choice of parameters the energy is stable which means that there exists a constant $C \geq 0$ such that for any finite configuration $\omega\in\Omega_f$,
	$$H(\omega) \geq - C N(\omega).$$
	The volume and the surface are clearly stable since the radii are uniformly bounded. The Euler-Poincaré characteristic is more delicate to study. In dimension 2, it is shown by Kendall et al. \Cite{Kendall} that for the union of N closed balls, the number of holes is bounded above by $2N-5$, and the number of connected components is clearly bounded by $N$. Therefore the Euler-Poincaré characteristic is stable for any parameter $\theta_2\in\R$. In higher dimension $d\geq 3$, for some configurations, the maximum number of holes is of order $N^2$ and thus the Euler-Poincaré characteristic is not stable if $\theta_2<0$. More generally, the stability of this statistic is not obvious even if $\theta_2$ is strictly positive. Therefore the existence of the infinite volume Gibbs point process is not well established. It is for this reason that we impose $\theta_2 =0$ in the case $d\ge3$.

	Let us turn to the definition of the local energy which provides the energy cost of introducing points in a domain given the configuration outside this domain. 
	\begin{definition}
		Let $\Delta \in \mathcal{B}_b(\R^d)$ and  $\omega \in \Omega_f$ be a finite configuration. We define the local energy of $\omega$ in $\Delta$ as
		\begin{equation*}
			H_\Delta (\omega) := H(\omega) - H(\omega_{\Delta^c}). 
		\end{equation*}
		From the additivity of Minkowski functionals, we observe a finite range property. Indeed we have that $H_\Delta ( \omega) = H_\Delta \big(\omega_{\Delta \oplus B(0,2R_1)}\big)$. Therefore, we can extend the  definition of the local energy to any configuration $\omega$ in $\Omega$ by
		\begin{equation}
			H_\Delta (\omega) := H_\Delta \left(\omega_{\Delta \oplus B(0,2R_1)}\right).
		\end{equation}
	\end{definition}

	\subsection{Gibbs measures}	
	
	Let $Q$ be a reference measure on $[R_0, R_1]$ for the distribution of the radii, and let $z$ be a non-negative real number  called the activity parameter. We denote by $\lambda$ the Lebesgue measure on $\R^d$ and by $\Pi^z$ the distribution of a Poisson point process on $E$ with intensity measure  $z \lambda \otimes Q$ \cite{last_penrose}. Similarly, for any $\Delta \in \mathcal{B}_b(\R^d)$, $\Pi_\Delta^z$ denotes the Poisson point process on $E_\Delta$ with intensity $z \lambda_\Delta \otimes Q$, where $\lambda_\Delta$ is the Lebesgue measure on $\Delta$. A point process $\mathbbm{X}$ is said to be stationary in space if, for any $u \in \R^d$, $\tau_u (\mathbbm{X}) \overset{d}{=} \mathbbm{X}$, where $\tau_u$ is the translation by the vector $u$. 
	
	\begin{definition}\label{defGibbs}
		A probability measure $P$ on $\Omega$ is a Gibbs measure for the Quermass interaction with parameters $\theta_1,\theta_2$, the activity $z >0$ and the inverse temperature $\beta \geq 0$ if $P$ is stationary in space and if for any $\Delta \in  \mathcal{B}_b(\R^d)$ and any bounded positive measurable function $f : \Omega \rightarrow \R$,
		\begin{equation}\label{eq_dlr}
			\int f(\omega) P(d\omega) = \int \int \frac{1}{Z_\Delta(\omega_{\Delta^c})} f(\omega_\Delta' \cup \omega_{\Delta^c}) e^{-\beta H_\Delta(\omega_\Delta' \cup \omega_{\Delta^c})} \Pi_\Delta^z(d\omega'_\Delta) P(d\omega)
		\end{equation} 
		where $Z_\Delta(\omega_{\Delta^c})$ is the partition function given the outer configuration $\omega_{\Delta^c}$
		\begin{equation*}
			Z_\Delta(\omega_{\Delta^c}) = \int e^{-\beta H_\Delta(\omega_\Delta' \cup \omega_{\Delta^c})} \Pi_\Delta^z(d\omega').
		\end{equation*}
	\end{definition}
	
	These equations are called DLR equations (for Dobrushin, Lanford and Ruelle). It is equivalent to the following conditional probability definition: For all $\Delta \in \mathcal{B}(\R^d)$ the distribution of $\omega_\Delta$ under $P$ given the outer configuration $\omega_{\Delta^c}$ is absolutely continuous with respect to $\Pi_\Delta^z$ with the following density 
	\begin{equation*}
		P(d\omega_\Delta' | \omega_{\Delta^c} ) = \frac{1}{Z_\Delta(\omega_{\Delta^c})} e^{-\beta H_\Delta(\omega_\Delta' \cup \omega_{\Delta^c})} \Pi_\Delta^z(d\omega'_\Delta).
	\end{equation*}
	For the collection of parameters $\theta_1, \theta_2, \beta$ and $z$, we denote by $\mathcal{G}(\theta_1, \theta_2, \beta, z)$ the set of all Gibbs measures for these parameters. The existence, the uniqueness or non-uniqueness (phase transition) of such Gibbs point processes are long-standing and challenging questions in statistical physics. In the present setting, since the interaction is finite range and stable, the existence is a direct application of Theorem 1 in \cite{MiniCours}. In other words, for any parameters $\theta_1\in\R, \theta_2\ge 0, \beta\ge 0$ and $z>0$, the set  $\mathcal{G}(\theta_1, \theta_2, \beta, z)$ is not empty. 
	
	\section{Results} \label{Section3}
	
	We say that a first-order phase transition occurs at $\theta_1,\theta_2, \beta $ and $z$  when the corresponding set of Gibbs measures $\mathcal{G}(\theta_1, \theta_2, \beta, z)$ contains at least two Gibbs measures $P^+,P^-$ with different intensities, i.e.,
	\begin{equation}
		\rho (P^+) := \E_P(N_{[0,1]^d}) > \rho(P^-).
	\end{equation}
	In particular, the set $\mathcal{G}(\theta_1, \theta_2, \beta, z)$ is not reduced to a singleton. This phenomenon is also called first order phase transition since the non-uniqueness of Gibbs measures is coupled with a discontinuity of the intensity. Other kind of phase transition are possible. Our main result states that such first-order phase transition occurs for a large range of parameters.

	We denote by $\theta_1^*$ the following constant $R_0 \frac{\V(B(0,1))}{\Sur(B(0,1))}$ = $\frac{R_0}{d}$. In dimension $d=2$, for any $\theta_1 > -\theta_1^*$ we define the constant $\theta_2^*(\theta_1) >0$ as in the equation \eqref{eq_theta_2_lim} below. At a first glance, the explicit value for $\theta_2^*(\theta_1)$ is not so important and its existence with a non-null value is already interesting in the next theorem.

	\begin{theorem}\label{thm_quermass_transition}
		Let $\theta_1,\theta_2$ be two parameters such that $\theta_1 > -\theta_1^* $ and  $0 \leq \theta_2 < \theta_2^*(\theta_1)$ (recall that $\theta_2=0$ if $d\ge 3$). Then there exists $\beta_c(\theta_1, \theta_2) > 0$ such that for all $\beta > \beta_c(\theta_1, \theta_2)$, there exists $ z_\beta^c > 0$ for which a first-order phase transition occurs: i.e. there exist $P^+,P^- \in \mathcal{G}(\theta_1, \theta_2, \beta, z_\beta^c)$ with $\rho(P^+) >  \rho(P^-)$. Moreover, there is $\alpha>0$ such that  $z_\beta^c$ satisfies $|z_\beta^c- \beta| = O(e^{-\alpha \beta})$.     
	\end{theorem}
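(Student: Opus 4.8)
The plan is to adapt the Pirogov-Sina\"i-Zahradnik (PSZ) theory to the present continuous, saturated model. I would first fix a partition of $\R^d$ into cubic tiles whose side length is of order $R_0$: small enough that a single particle centred in a tile covers a controlled fraction of it, and comparable to the finite range $2R_1$ of $H_\Delta$ so that only neighbouring tiles interact. To each finite configuration I attach a labelling of the tiles into two phases, \emph{liquid} where the configuration and its neighbourhood force the halo to saturate the tile, and \emph{gas} where they leave it essentially empty. The two ground states are $\omega^{\mathrm{gas}}=\emptyset$ and a homogeneous dense configuration $\omega^{\mathrm{liq}}$ with at least one particle per tile. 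The saturation property makes the relevant local energies explicit: adding a particle inside a liquid region costs, to leading order, nothing, since the new ball lies in an already covered region and leaves $\V$, $\Sur$ and $\chi$ unchanged, whereas adding an isolated ball $B(x,R)$ inside a gas region costs its full Minkowski energy $E_1(R):=\V(B(x,R))+\theta_1\Sur(B(x,R))-\theta_2\chi(B(x,R))$. The hypotheses $\theta_1>-\theta_1^*$ and $0\le\theta_2<\theta_2^*(\theta_1)$ are exactly what guarantee that $E_1(R)$ is bounded below by a strictly positive constant uniformly in $R\in[R_0,R_1]$, and I expect this lower bound to govern the exponent $\alpha$.

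Next I would set up the contour representation. A contour $\gamma$ is a connected block of tiles on which the configuration disagrees with both pure ground states, decorated with the labels of the phases it separates. Expanding the partition function $Z_\Delta(\omega_{\Delta^c})$ with a pure (liquid or gas) boundary condition, I would rewrite it as a sum over compatible families of contours, each carrying a weight obtained by integrating $e^{-\beta H_\Delta}$ over the particle configurations that realise the prescribed pattern. The key step is a Peierls-type bound showing that this weight is at most $e^{-c\beta|\gamma|}$, where $|\gamma|$ is the number of tiles of $\gamma$ and $c>0$: an interface between a saturated and an empty region necessarily creates either uncovered volume in nominally liquid tiles or isolated balls in nominally gas tiles, and in both cases the saturation estimates of the previous paragraph produce an energy surplus of order $|\gamma|$ over the better pure state. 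Requiring this energetic gain to dominate the configurational entropy of the interior points forces $\beta$ large and defines $\beta_c(\theta_1,\theta_2)$.

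With contour stability in place I would invoke the PSZ machinery of truncated weights. Since the gas and liquid phases share no symmetry, their pressures coincide only for special values of $z$, so I introduce the truncated contour weights and the associated metastable pressures $p^{+}(z)$ (liquid) and $p^{-}(z)$ (gas), each obtained from a convergent cluster expansion provided by Annex~A, the convergence being guaranteed precisely by the $\tau$-stability of Proposition~\ref{prop_tau_stability_truncated_weights}. Both $p^{\pm}$ are analytic in $z$, and I would define $z_\beta^c$ as the unique solution of the balance equation $p^{+}(z)=p^{-}(z)$, the stable phase being the one of larger pressure; exactly at $z_\beta^c$ the two pressures cross and both contour expansions converge simultaneously. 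Passing to the thermodynamic limit with liquid and gas boundary conditions then yields two infinite-volume Gibbs measures $P^{+},P^{-}\in\mathcal{G}(\theta_1,\theta_2,\beta,z_\beta^c)$. Their intensities are recovered, up to the factor $z$, as the one-sided $z$-derivatives of the pressure $p=\max(p^{+},p^{-})$ at $z_\beta^c$; since $\partial_z p^{+}\approx 1$ while $\partial_z p^{-}$ is exponentially small, these one-sided derivatives differ, which gives simultaneously $\rho(P^{+})>\rho(P^{-})$ and the non-differentiability of the pressure. Finally, evaluating the leading terms gives $p^{-}(z)\approx z\,e^{-\beta\min_R E_1(R)}$, the pressure of a dilute gas of isolated balls, and $p^{+}(z)\approx z-\beta$, the saturated activity gain against the volume energy $-\beta\V$; equating them yields $z_\beta^c=\beta+O(e^{-\alpha\beta})$ with $\alpha$ the minimal single-ball energy $\min_R E_1(R)$.

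The main obstacle is the combination of this Peierls estimate with the $\tau$-stability of the truncated weights, namely Proposition~\ref{prop_tau_stability_truncated_weights}, whose proof is deferred to Annex~B. Unlike the lattice case, the weight of a contour is itself an integral over a continuum of particle configurations, so bounding it requires that the interfacial surplus $e^{-c\beta|\gamma|}$ genuinely beat the entropy of those configurations, whose effective activity is of order $z\approx\beta$ and hence large. Establishing that the energetic cost strictly wins for every admissible $(\theta_1,\theta_2)$ and every $\beta>\beta_c$, uniformly in the size and geometry of $\gamma$, is the technical heart of the argument, and it is precisely the saturation property that makes these competing contributions explicitly comparable.
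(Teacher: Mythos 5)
Your overall architecture is the paper's: tile space at a scale tied to $R_0$, exploit saturation to define two reference states, set up contours, prove a Peierls bound, run the PSZ machinery with truncated weights and metastable pressures, locate $z_\beta^c$ as the crossing point of the two truncated pressures, and read off the two intensities from the one-sided derivatives of the pressure. The final steps (sign change and strict monotonicity of the gap, thermodynamic limits with the two boundary conditions, $\rho(P^\pm)=z+z\beta\,\partial_z^{\pm}\psi$) match Section \ref{subsection4.6} closely, except that the truncated pressures are only $\mathcal{C}^1$ (the cut-off $\kappa$ is $\mathcal{C}^1$), not analytic; $\mathcal{C}^1$ regularity plus monotonicity of the gap is what is actually used.

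The genuine gap is in the Peierls step, the one part of the argument specific to this model. You assert that the hypotheses $\theta_1>-\theta_1^*$ and $0\le\theta_2<\theta_2^*(\theta_1)$ are ``exactly what guarantee'' that the single-ball energy $E_1(R)=\V(B(x,R))+\theta_1\Sur(B(x,R))-\theta_2\chi(B(x,R))$ is uniformly positive, and that this positivity yields the contour bound $e^{-c\beta|\gamma|}$. Neither half is correct. The paper's $\theta_2^*(\theta_1)$ equals $r_0\,\V(B(0,R_0))$ for $\theta_1\ge 0$, where $r_0$ is the small domino density of Lemma \ref{lem-domino-domino}; this is far below the single-ball threshold $\V(B(0,R_0))+\theta_1\Sur(B(0,R_0))$, and for good reason: a configuration realising a contour can fragment its halo into as many as $|\overline{\gamma}|\delta^d/\V(B(0,R_0))$ components (Lemma \ref{lem-chi}), so the term $-\theta_2\chi$ can \emph{lower} the energy by an amount proportional to $|\overline{\gamma}|$, and this loss must be beaten by the actual surplus. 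That surplus is not the cost of isolated balls but the ``energy from vacuum'': since $R_0\ge 2\delta\sqrt{d}$, every occupied tile adjacent to an empty one covers that empty tile entirely, so a positive fraction $r_0$ of the contour tiles contributes an extra $\delta^d$ of volume on top of the reference $|\overline{\gamma}_1|\delta^d$ (the reference must be $|\overline{\gamma}_1|\delta^d$, not $0$, so that it cancels against the factor $g_1^{|\overline{\gamma}_1|}$ in the weight). Moreover, for $\theta_1<0$ the surface term of an arbitrary contour configuration is negative and must be bounded against the volume of covered tiles near $\partial L(\omega)$; that is Lemma \ref{lem-surface-domino}, the true origin of $\theta_1^*$ (the single ball merely happens to be the extremal case there). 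Without these geometric estimates the inequality $H_{\overline{\gamma}}(\omega)\ge|\overline{\gamma}_1|\delta^d+\rho_0|\overline{\gamma}|$ of Proposition \ref{prop_peierls} --- on which the $\tau$-stability, the cluster expansion, and hence everything downstream rests --- is not established.
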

	
	It is important to note that the first-order phase transition is proved only for a special value $z_\beta^c > 0$, the other parameters  $\theta_1,\theta_2, \beta$ being fixed. We are not able to prove that $z_\beta^c$ is the only one value for which the phase transition occurs; it is likely not true in general. But, as a corollary of the Pirogov-Sinaï-Zahradnik technology we used, in a neighbourhood of $z_\beta^c$ it is the case. This local uniqueness is typical for a first-order phase transition. Note that in the setting of Widom-Rowlinson model (i.e., $R_0=R_1$ and $\theta_1=\theta_2=0$), the critical activity $z_\beta^c$ is unique and equal to $\beta$ \cite{georgii_hagstrom, RuelleWR, WidomRowlinson}. Furthermore, we conjecture that the phase transition we observe is linked to a liquid-gas phase transition. Indeed, we believe that we do not have symmetry breaking for this model as nothing should induce rigidity in the behaviour. Our intuition, is that at low activity the model should roughly behave like a Poisson point process with intensity $ze^{-\beta H(\{0\})}$ and at high activity it behaves like a Poisson point process with intensity $z$. The reason is that at low activity, the halo is negligible, so points are primarily added in empty areas, with the cost of this action being $H(\{0\})$. Conversely, at high activity, the halo covers nearly the entire space, making the cost of adding a point essentially $0$. 
	
	The proof of Theorem \ref{thm_quermass_transition} relies on the study of the regularity of the so-called pressure $\psi$ defined by the following thermodynamic limit
	\begin{equation}
		\psi (z, \beta) : = \lim\limits_{n \rightarrow +\infty} \frac{1}{\beta |\Delta_n|} \ln(Z_{\Delta_n}),
	\end{equation}
	where  $(\Delta_n)$ is the sequence of the following boxes $[-n,n]^d$ and  $Z_{\Delta_n}$ is the partition function with free boundary condition (i.e.  $Z_{\Delta_n}(\emptyset)$). This limit always exists as a consequence of sub-additivity of the sequence $(\ln Z_{\Delta_n})_{n\in \N}$. As a corollary of the Pirogov-Sina\"i-Zahradnik technology we used, we obtain the non regularity of the pressure at the critical point.

	\begin{proposition}\label{prop_first_order_transition}
		Under the same assumptions as Theorem \ref{thm_quermass_transition}, for all $\theta_1,\theta_2, \beta$ such that $\theta_1 > -\theta_1^* $, $0 \leq \theta_2 < \theta_2^*(\theta_1)$  (recall that $\theta_2=0$ if $d\ge 3$) and $\beta > \beta_c(\theta_1, \theta_2)$ 
		\begin{align*}
			\frac{\partial \psi}{\partial z^+}(z_\beta^c, \beta) > \frac{\partial \psi}{\partial z^-}(z_\beta^c, \beta).
		\end{align*} 
		Furthermore, there exist two Gibbs measures $P^+, P^- \in \mathcal{G}(\theta_1, \theta_2, \beta, z_\beta^c)$  such that 
		\begin{equation}
			\rho(P^-) = z + z\beta\frac{\partial \psi}{\partial z^-}(z_\beta^c, \beta) \quad \text{and } \quad \rho(P^+) = z + z\beta \frac{\partial \psi}{\partial z^+}(z_\beta^c, \beta).
		\end{equation}
	\end{proposition}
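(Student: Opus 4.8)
\section*{Proof proposal}

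The plan is to obtain both assertions from a single finite-volume identity relating the density to the logarithmic derivative of the partition function, fed into the two metastable branches produced by the Pirogov--Sina\"i--Zahradnik analysis.

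First I would record the differentiation identity. Since the boundary condition is empty, $H_{\Delta_n}(\omega)=H(\omega_{\Delta_n})$, and the Poisson expansion gives
\begin{equation*}
Z_{\Delta_n} = e^{-z|\Delta_n|}\sum_{k\ge 0}\frac{z^k}{k!}\int_{E_{\Delta_n}^k} e^{-\beta H(\{(x_i,R_i)\})}\,\prod_{i=1}^k dx_i\,dQ(R_i).
\end{equation*}
Differentiating in $z$ yields $z\,\partial_z\ln Z_{\Delta_n} = -z|\Delta_n| + \E_{P_{\Delta_n}}[N_{\Delta_n}]$, where $P_{\Delta_n}$ is the finite-volume Gibbs measure with free boundary. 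Dividing by $\beta|\Delta_n|$ and letting $n\to\infty$ gives, wherever the derivative and the limit can be exchanged,
\begin{equation*}
z\beta\,\frac{\partial\psi}{\partial z}(z,\beta) = -z + \rho,
\end{equation*}
which is the announced relation $\rho = z + z\beta\,\partial_z\psi$. The exchange of limit and derivative is exactly where the cluster expansion is needed: on each side of $z_\beta^c$ the convergent expansion of the truncated partition functions furnished by Proposition \ref{prop_tau_stability_truncated_weights} gives analyticity and locally uniform bounds on the $z$-derivatives, so the one-sided limits of $\partial_z(\beta|\Delta_n|)^{-1}\ln Z_{\Delta_n}$ exist and equal the derivatives of the limiting functions.

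Next I would use the PSZ decomposition already set up. The theory produces two metastable pressures $\psi^-$ and $\psi^+$, attached respectively to the empty (gas) and to the dense (liquid) ground states, each analytic in $z$ through its convergent expansion, together with the identification $\psi=\max(\psi^-,\psi^+)$ in a neighbourhood of $z_\beta^c$, where $z_\beta^c$ is the unique crossing point, with $\psi=\psi^-$ for $z\le z_\beta^c$ and $\psi=\psi^+$ for $z\ge z_\beta^c$. Hence the one-sided derivatives of $\psi$ at $z_\beta^c$ are the genuine derivatives of the two branches,
\begin{equation*}
\frac{\partial\psi}{\partial z^-}(z_\beta^c,\beta)=(\psi^-)'(z_\beta^c),\qquad \frac{\partial\psi}{\partial z^+}(z_\beta^c,\beta)=(\psi^+)'(z_\beta^c).
\end{equation*}
Because $\psi$ agrees with the analytic branch $\psi^\pm$ on the corresponding side of $z_\beta^c$, applying the identity of the first step along that side gives $z\beta(\psi^\pm)'(z_\beta^c)=-z+\rho^\pm$, where $\rho^-$ and $\rho^+$ are the left and right limits of the density $\rho(z)$ at $z_\beta^c$, that is, the densities of the gas and liquid phases.

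Then I would construct the two Gibbs measures. Weak limits along $(\Delta_n)$ of the finite-volume Gibbs measures with dense, respectively empty, boundary conditions produce $P^+$ and $P^-$; these lie in $\mathcal{G}(\theta_1,\theta_2,\beta,z_\beta^c)$ because the interaction is finite range, so the DLR equations pass to the limit, and stationarity is recovered by spatial averaging. The core of the contour analysis is that the density of $P^\pm$ equals $\rho^\pm$, whence
\begin{equation*}
\rho(P^\pm)=\rho^\pm = z + z\beta(\psi^\pm)'(z_\beta^c) = z + z\beta\,\frac{\partial\psi}{\partial z^\pm}(z_\beta^c,\beta),
\end{equation*}
which is the second assertion. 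Finally, the strict inequality $\partial\psi/\partial z^+>\partial\psi/\partial z^-$ is equivalent to $\rho^+>\rho^-$, and this is the main obstacle: it is the genuine low-temperature content of Theorem \ref{thm_quermass_transition}. I would control it through the same expansion by showing that for $\beta$ large the gas density $\rho^-$ is of order $ze^{-\beta H(\{0\})}$, hence exponentially small, while the liquid density $\rho^+$ stays bounded below by a positive constant coming from the requirement of at least one particle per tile in the dense ground state, the two estimates being uniform on a neighbourhood of $z_\beta^c\sim\beta$. For $\beta>\beta_c(\theta_1,\theta_2)$ these two ranges are disjoint, so the branches cross transversally, the one-sided derivatives differ, and the liquid derivative dominates, giving both $\partial\psi/\partial z^+>\partial\psi/\partial z^-$ and $\rho(P^+)>\rho(P^-)$.
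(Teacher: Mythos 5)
Your overall architecture is the same as the paper's: the finite-volume identity $z\,\partial_z\ln Z=-z\delta^d|\Lambda|+\E(N)$, the two metastable pressures with $\psi=\max(\widehat{\psi}^{(0)},\widehat{\psi}^{(1)})$ crossing at $z_\beta^c$, the identification of the one-sided derivatives of $\psi$ with the derivatives of the two analytic branches, and the construction of $P^\pm$ as stationarized limits of the finite-volume measures with the two boundary conditions. Two steps, however, are genuine gaps as written. First, you differentiate the \emph{free-boundary} partition function $Z_{\Delta_n}$ and invoke the cluster expansion to exchange derivative and thermodynamic limit. The convergent expansions supplied by Proposition \ref{prop_tau_stability_truncated_weights} and Theorem \ref{theorem_cluster_expansion} apply to the partition functions $Z^{\either}_{\Lambda_n}$ with the tile boundary conditions, not to $Z_{\Delta_n}$; Lemma \ref{lem_pression_free_boundary} only identifies the \emph{limits} of the pressures and gives no control on $\partial_z\ln Z_{\Delta_n}$. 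The paper sidesteps any exchange-of-limits issue by combining the exact finite-volume identity $\partial_z\log Z^{\either}_{\Lambda_n}=-\delta^d|\Lambda_n|+\tfrac{1}{z}\E_{P^{\either}_{\Lambda_n}}(N_{\widehat{\Lambda}_n})$ with the exact cluster-expansion formula $\log Z^{\either}_{\Lambda_n}=\beta\delta^d\widehat{\psi}^{\either}|\Lambda_n|+\Delta^{\either}(\Lambda_n)$, whose $z$-derivative is a volume term plus a boundary term bounded by $D\eta(\tau,l_0)|\partial_{ext}\Lambda_n|$; dividing by $|\Lambda_n|$ and using the empirical field then yields $\rho(P^{\either})=z+z\beta\,\partial_z\widehat{\psi}^{\either}(z_\beta^c)$ directly. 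Your identity should be run on $Z^{\either}_{\Lambda_n}$, not on $Z_{\Delta_n}$.

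Second, and more seriously, the strict inequality $\partial\psi/\partial z^+>\partial\psi/\partial z^-$ --- the actual content of the proposition --- is only sketched. You propose to separate the two densities by showing $\rho^-$ is exponentially small while $\rho^+$ is bounded below, but you give no argument beyond the heuristic, and the claim that ``for $\beta>\beta_c$ these two ranges are disjoint'' is exactly what needs proof. The paper obtains the inequality uniformly on $U_\beta$ from the gap function $G(s)=\widehat{\psi}^{(1)}-\widehat{\psi}^{(0)}=(s-1)+\tfrac{1}{\beta\delta^d}\ln(1-e^{-s\beta\delta^d})+f^{(1)}-f^{(0)}$: its $s$-derivative is bounded below by $1-2D\eta(\tau,l_0)>0$, because the explicit leading term $\widehat{\psi}_0^{(1)}-\widehat{\psi}_0^{(0)}$ contributes $1$ up to an exponentially small correction, while the cluster contributions satisfy $|\partial_s f^{\either}|\le D\eta(\tau,l_0)$. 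That one quantitative line is what your density-separation sketch would have to reproduce (and it does, implicitly, since $\rho(P^{\either})-z$ is controlled by $\partial_z\widehat{\psi}_0^{\either}$ up to $O(\eta)$); as written, it is missing, so the proposal does not yet establish either the non-differentiability of the pressure or $\rho(P^+)>\rho(P^-)$.
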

	
	\begin{remark}
		In general, the Quermass interaction for $\omega \in \Omega_f$ is given by
		\begin{equation}
			H(\omega) = \sum_{k=0}^d \theta_{d-k} M_k^d(L(\omega))
		\end{equation}
		where $\theta_{d-k} \in \R$ and $M_k^d$ is the $k$-th Minkowski functionals. In particular, $M_d^d = \mathcal{V}$, $M_{d-1}^d = \mathcal{S}$ and $M_0^d = \chi$. We conjecture that the volume must contribute positively to the Hamiltonian ($\theta_0>0$) in order to observe a phase transition. It should be possible to extend our results and include additional Minkowski functionals. However, one must verify the stability assumption for such Hamiltonian and compare the contributions of $M_k^d$ and $\mathcal{V}$ in the contour. By doing so, it should be feasible to tune the parameters $\theta_k$ to satisfy a Peierls condition. 
	\end{remark}

	
	\section{Proof}\label{Section4} 
	
	We start by providing the sketch of proof and an outline of the development. Our aim is to prove Proposition \ref{prop_first_order_transition}, as it is clear that this result implies Theorem \ref{thm_quermass_transition}. In section \ref{subsection4.1}, we perform a coarse graining of the Hamiltonian. We conduct a spatial decomposition of the energy function along the lines of tiles that pave $\R^d$, such that we have
	\begin{equation*}
		H = \sum_{i \in \Z^d} \boldmath{H}_i.
	\end{equation*}
	We can observe that for small tiles, the presence of at least one particle inside a tile $i \in \Z^d$ would imply that the spatial energy $\boldmath{H}_i = \delta^d$, where $\delta>0$ is suitably chosen and is the size of a tile. We call this property the saturation of the energy as no matter how many points we put in it this value remains constant. On the other hand, the spatial energy of an empty tile, when the other tiles around are empty, is equal to $0$. Therefore, when the status of the tiles are homogeneous, it becomes easy to compute the energy of a configuration. Hence the study of the behaviour in the mixing areas is necessary. In section \ref{subsection4.2}, we will properly define the contours, $\Gamma = \{ \gamma_1, \cdots, \gamma_n\}$, in order to match with the intuition of the interface areas between homogeneous regions. In section \ref{subsection4.3} we prove that the partition function with boundary condition associated to either state ( $\either = 1$ wired, $\either = 0$ free) can be written as a polymer development, i.e. 
	\begin{equation*}
		Z_\Lambda^\either = \sum_\Gamma \prod_{\gamma \in \Gamma} w_\gamma^\either
	\end{equation*}
	where $w_\gamma^\either$ are weights associated with each contours. In section \ref{subsection4.4}, for a large set of parameters $(\theta_1, \theta_2),$ we prove that the spatial energy in any contour $\gamma$ and any configuration $\omega$ achieving the contour verifies
	\begin{equation*}
		\boldmath{H}_\gamma (\omega) \geq |\gamma_1| \delta^d + \rho_0 |\gamma|,
	\end{equation*}
	where $\gamma_1 \subset \gamma$ corresponds to the tiles of the contours that contain a particle, and $\rho_0$ is a strictly positive constant. This type of result is in the same spirit of Peierls condition for contours on the lattice and comes from our spatial decomposition of the energy. If the size of the tiles are well tuned, some empty tiles would have the same spatial energy as a non-empty tile. We call this property the energy from vacuum. Using this inequality, we will prove, in section \ref{subsection4.5}, that for $\beta$ large enough and for $z$ in the vicinity of $\beta$ the weights are $\tau$-stable, i.e.
	\begin{equation*}
		w_\gamma^\either \leq e^{-\tau |\gamma|},
	\end{equation*}
	where $\tau >0$ depends on the value of $\beta$ and can be made arbitrarily large provided that $\beta$ is large. We prove this using truncated weights and pressure, as usual, with the PSZ theory. This exponential decay of the weights is needed to apply results on cluster expansion and write the partition function as follows
	\begin{equation*}
		Z_\Lambda^\either = e^{\psi |\Lambda| + \Delta_\Lambda},
	\end{equation*}
	where $\psi$ is the pressure and $\Delta_\Lambda$ is a perturbation term that is of the order of the surface. Finally in section \ref{subsection4.6}, we deduce the non-differentiability of the pressure for a critical activity and we deduce the first-order phase transition, as stated in Proposition \ref{prop_first_order_transition}.
	\newline
	
	\startcontents[sections]
	\printcontents[sections]{}{1}{}
	
	\subsection{Coarse Graining decomposition of the energy} \label{subsection4.1}
	
	Let $\delta > 0$ and for all integers $i \in \Z^d$, we call a tile $$T_i := \tau_{i\delta} \left( [ -\frac{\delta}{2}, \frac{\delta}{2}[^d\right),$$ where $\tau_{i\delta}$ is the translation by vector $i\delta$. We call a facet~$F$ any non-empty intersection between two closed tiles in  $(\overline{T}_i)_{i\in \Z}$. Clearly the dimension of a facet can be any integer between $0$ and $d$. We denote by $\mathcal{F}$ the set of all facets. The energy of a tile $H_i$ is given by
	\begin{equation*}
		\forall \omega \in \Omega_f, \quad H_i (\omega) = \V(L(\omega) \cap T_i) + \theta_1 \Sur_i(L(\omega)) - \theta_2 \chi_i (L(\omega)),
	\end{equation*}
	where, for any $A \subset \R^d$ a finite union of compact convex sets
	\begin{align*}
		\Sur_i (A) = \sum_{k = d-1}^d \sum_{\substack{F \in \mathcal{F} \\ \dim(F)=k}, F \cap T_i \neq \emptyset} (-1)^{d-k} \Sur(A \cap F) \\
		\chi_i(A) = \sum_{k = 0}^d \sum_{\substack{F \in \mathcal{F} \\ \dim(F)=k} , F \cap T_i \neq \emptyset} (-1)^{d-k} \chi(A \cap F).
	\end{align*} 
	Furthermore, we can observe that $S_i(\overline{T}_i) =0$ and $\chi_i(\overline{T}_i) = 0$. Therefore, for any finite configuration $\omega \in \Omega_f$ such that $T_i\subset L(\omega)$ we have $H_i(\omega) = \mathcal{V}(T_i) = \delta^d$. 
	
	
	\begin{lemma}
		For every finite configuration $\omega \in \Omega_f$ 
		\begin{equation*}
			H(\omega) = \sum_{i \in \Z^d}  H_i(\omega).
		\end{equation*}
	\end{lemma}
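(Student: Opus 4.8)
The plan is to prove the three separate identities
$\sum_{i\in\Z^d}\V\big(L(\omega)\cap T_i\big)=\V\big(L(\omega)\big)$,
$\sum_{i\in\Z^d}\Sur_i\big(L(\omega)\big)=\Sur\big(L(\omega)\big)$ and
$\sum_{i\in\Z^d}\chi_i\big(L(\omega)\big)=\chi\big(L(\omega)\big)$,
and then to take the linear combination with coefficients $1,\theta_1,-\theta_2$ to recover $\sum_i H_i=H$. Write $A:=L(\omega)$. Since $\omega$ is finite and the radii lie in $[R_0,R_1]$, the halo $A$ is a bounded finite union of balls; hence it belongs to the convex ring (finite unions of compact convex sets), only finitely many tiles meet $A$, and every sum below is finite.

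The volume identity is immediate and needs no facet correction: the half-open tiles $(T_i)_{i\in\Z^d}$ partition $\R^d$, so by additivity of Lebesgue measure $\sum_i\V(A\cap T_i)=\V\big(A\cap\bigcup_i T_i\big)=\V(A)$. The crucial difference is that $\V$ is a measure, whereas $\Sur$ and $\chi$ are only additive set functionals (valuations); for them a plain restriction to the tiles does not suffice, which is exactly why the facet corrections are introduced.

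For $\phi\in\{\Sur,\chi\}$ I would use that $\phi$ is a valuation on the convex ring and therefore obeys the inclusion--exclusion (additivity) identity. Covering $A$ by the closed tiles, $A=\bigcup_i(A\cap\overline{T}_i)$, inclusion--exclusion gives $\phi(A)=\sum_{\emptyset\neq I}(-1)^{|I|+1}\phi\big(A\cap\bigcap_{i\in I}\overline{T}_i\big)$, where each nonempty multi-tile intersection $\bigcap_{i\in I}\overline{T}_i$ is exactly a facet $F\in\mathcal{F}$. Regrouping the terms by the facet they produce, the signed number of index sets $I$ whose intersection is a fixed $k$-dimensional facet collapses, through an inner inclusion--exclusion over the $d-k$ directions transverse to the facet, to the factor $(-1)^{d-k}$ that appears in the definitions of $\Sur_i$ and $\chi_i$. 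The localized functionals $\Sur_i$ and $\chi_i$ are then precisely the pieces of this global expansion carried by the facets meeting the tile $T_i$, so that summing over $i$ reassembles the whole expansion and returns $\phi(A)$. A clean structural remark is that the surface sum runs only over $k\in\{d-1,d\}$: if $\dim F=k\le d-2$ then $A\cap F$ has empty interior and dimension at most $d-2$, whence $\partial(A\cap F)=\overline{A\cap F}$ carries no $(d-1)$-dimensional measure and $\Sur(A\cap F)=0$; for $\chi$, by contrast, every dimension contributes, which is why the $\chi_i$ sum runs over all $k$.

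I expect the crux to be exactly this bookkeeping: matching, facet by facet, the incidences $F\cap T_i\neq\emptyset$ and the half-open convention against the signed multiplicities produced by the global inclusion--exclusion, so that each facet is ultimately accounted for the correct number of times. A useful consistency check is the pair of elementary identities $\Sur_i(\overline{T}_i)=0$ and $\chi_i(\overline{T}_i)=0$ recorded just above, which say that a tile entirely internal to $A$ contributes neither genuine surface nor curvature; the facet corrections must cancel precisely the spurious boundary manufactured by slicing $A$ along the tile walls. The remaining delicate point is the set of degenerate configurations in which $\partial A$ meets the $(d-1)$-skeleton of the tiling on a set of positive $\mathcal{H}^{d-1}$-measure, or in which spheres are tangent to the facets: for $\Sur$ such coincidences are $\mathcal{H}^{d-1}$-negligible and can be absorbed, but $\chi$ is genuinely sensitive to them, so there I would argue through the additivity of $\chi$ on the convex ring together with the finiteness of $\omega$, reducing any such coincidence to the already established valuation identity. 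Carrying out this telescoping uniformly over all finite $\omega$ is the technical heart of the lemma.
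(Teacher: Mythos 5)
Your proposal is essentially the paper's own argument: the paper disposes of this lemma in one sentence, invoking exactly the additivity of the Minkowski functionals and the resulting inclusion--exclusion over the closed tiles, with the facet terms $(-1)^{d-k}\phi(A\cap F)$ playing the role you describe. You in fact spell out more of the bookkeeping (the volume case via the half-open partition, the collapse of the signed multiplicities to $(-1)^{d-k}$, and why the surface sum only needs $k\in\{d-1,d\}$) than the paper does, so there is nothing to object to relative to the source.
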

	
	The proof follows directly from the additivity of the Minkowski functionals and, consequently, the inclusion-exclusion principle. In the following, for every $\Lambda \subset \Z^d$, we denote 
	\begin{equation*}
		H_\Lambda := \sum_{i \in \Lambda} H_i.
	\end{equation*}
	
	\subsection{Contours} \label{subsection4.2}
	
	Let us consider the lattice $\Z^d$ underlying the tiles $(T_i)_{i \in \Z^d}$, and the associated graph $G$ where two sites $i,j \in \Z^d$ are connected if $\norm{i-j}_\infty =1$. In this subsection, we consider a length $L>0$ used to define a contour. This distance will be specified later in the proof. We call the spin configuration the application 
	\begin{align*}
		\sigma :& \Omega_f \times \Z^d \rightarrow \{ 0, 1\} \\
		& (\omega, i) \mapsto \begin{cases}
			0 & \text{if } \omega_{T_i} =\emptyset \\
			1 & \text{otherwise}
		\end{cases}.
	\end{align*}
	In the following we use the notation $\either$ for either $0$ or $1$. 
	
	\begin{definition}
		Let $L>0$ and $\omega \in \Omega$, a site $i \in \Z^d$ is said to be $\either$-correct if for all sites $j$ such that $\norm{i-j} \leq L$, we have $\sigma(\omega, j) = \either$. A site $i$ is non-correct if it is neither $0$-correct nor $1$-correct. The set of all non-correct sites is denoted by $\overline{\Gamma}(\omega) $. We can partition $\overline{\Gamma}(\omega)$ into its maximal connected components with respect to the graph $G$. In the following, we denote these maximal connected components of $\overline{\Gamma}(\omega)$ by $\overline{\gamma}$.
	\end{definition}	
	Since we are considering only finite configurations, the number of connected components is finite and for any $\overline{\gamma}$, the complementary set has a finite amount of maximum connected components that we denote by $A$ and in particular we have only one unbounded connected component and we call it the exterior of $\overline{\gamma}$ that we denote by $ext(\overline{\gamma}) $.
	
	\begin{definition}
		Let $\Lambda \subset \Z^d$ and $L>0$, we define the exterior boundary~$\partial_{ext} \Lambda$ and the interior boundary~$\partial_{int} \Lambda$ of $\Lambda$ as
		\begin{align*}
			\partial_{ext} \Lambda = \{j \in \Lambda^c, d_2(j, \Lambda) \leq L \}  \\
			\partial_{int} \Lambda = \{i \in \Lambda, d_2(i, \Lambda^c) \leq L+1 \},
		\end{align*}
		where $d_2$ is the Euclidean distance in $\R^d$.		
	\end{definition}
	
	\begin{lemma}\label{lem_partial_type}
		Let $\omega \in \Omega_f$ be a finite configuration and $\overline{\gamma}$ be a maximal connected component of $\overline{\Gamma}(\omega)$. Let $A$ be a maximal connected component of $\overline{\gamma}^c$. Then, there is a unique $\either \in \{0, 1\}$ such that $\sigma(\omega, i) = \either$ for all $i \in \partial_{ext} A \cup \partial_{int}A$. This unique $\either$  is called the \emph{label} of $A$ and is denoted by $\Label(A)$. 
	\end{lemma}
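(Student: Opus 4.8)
The plan is to single out a natural candidate for the label, namely the common spin value of the correct sites of $A$ that touch $\overline{\gamma}$, to show that this value is forced on the whole collar $\partial_{ext}A\cup\partial_{int}A$, and finally that it is independent of the touching site chosen. I would start by recording two purely combinatorial consequences of maximality of the connected components. First, if $i\in A$ is $G$-adjacent to a site $j\in\overline{\gamma}$, then $i$ is correct: otherwise $i\in\overline{\Gamma}(\omega)$ and, being $G$-adjacent to $j\in\overline{\gamma}\subset\overline{\Gamma}(\omega)$, it would lie in the same maximal connected component $\overline{\gamma}$, contradicting $i\in A\subset\overline{\gamma}^{\,c}$. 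Write $D$ for this (non-empty) inner rim; each of its sites is correct and carries a label in $\{0,1\}$. Second, if $i\in A$ and $j\notin A$ are $G$-adjacent, then $j\in\overline{\gamma}$: were $j\in\overline{\gamma}^{\,c}$ it would be $G$-connected to $A$ and so, by maximality of $A$, lie in $A$. Thus one can only leave $A$ through $\overline{\gamma}$.

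Throughout I would use the \emph{propagation principle}: if $i$ is $\either$-correct and $k$ is within the range $L$ of $i$, then $\sigma(\omega,k)=\either$; in particular two correct sites within distance $L$ of each other (so, a fortiori, two $G$-adjacent correct sites) share the same label.

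For the covering step, take $k\in\partial_{ext}A\cup\partial_{int}A$. In either case there is an $A$–$A^{c}$ pair containing $k$ at Euclidean distance $\le L+1$; choose a shortest $\norm{\cdot}_\infty$-geodesic joining its $A$-endpoint to its complementary endpoint, and let $r$ be the last site of the geodesic lying in $A$. By the second structural fact the successor of $r$ lies in $\overline{\gamma}$, so $r\in D$, and a step count bounds $\norm{r-k}_\infty$ by the correctness range. Here the extra unit in the width $L+1$ of $\partial_{int}A$ (against $L$ for $\partial_{ext}A$) is exactly what is consumed in passing from $k$ to the first exit site $r$, and the comparison between $\norm{\cdot}_\infty$ and $d_2$ is what converts the boundary thresholds into the correctness range. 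By the propagation principle $\sigma(\omega,k)$ then equals the label of $r$, so every collar site inherits the label of some rim site.

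It remains to prove that all rim sites carry the same label, and this is the heart of the statement. The guiding observation is that $D$ forms a connected correct ``sheath'' hugging $\overline{\gamma}$: any other component $\overline{\gamma}'$ of $\overline{\Gamma}(\omega)$ that happens to lie in $A$ is, again by maximality, never $G$-adjacent to $\overline{\gamma}$, so foreign defects can never reach the rim. Using that $\overline{\gamma}$ is $G$-connected, I would join any two $r,r'\in D$ by a path of rim sites that remains $G$-adjacent to $\overline{\gamma}$; along it consecutive sites are correct and at distance $\le L$, so the propagation principle transports a single label from $r$ to $r'$. The \textbf{main obstacle} is precisely this connectivity claim: turning ``$\overline{\gamma}$ is connected and foreign contours stay away from it'' into an honest connected path of rim sites is a discrete Jordan-type statement about the inner boundary of a connected subset of $\Z^d$ relative to a single complementary component, and it is exactly where a lower bound $L\ge c$ on the contour scale and the $\norm{\cdot}_\infty$ versus $d_2$ comparisons must be handled with care. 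Once constancy on $D$ is established, the covering step yields a single value $\either$ on the non-empty set $\partial_{ext}A\cup\partial_{int}A$; uniqueness is then immediate, and this value is $\Label(A)$.
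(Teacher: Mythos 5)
Your proposal is correct and follows essentially the same route as the paper, which simply invokes the classical argument (Lemma 7.23 in the Friedli--Velenik reference): correctness of the sites of $A$ in contact with $\overline{\gamma}$, propagation of the label over the range $L$, and a connectivity statement for the boundary. The discrete Jordan-type connectivity of the inner rim that you single out as the main obstacle is exactly the fact the paper also leaves to that reference, so your write-up is, if anything, slightly more explicit about where the real work lies and about the $L$ versus $L+1$ and $\norm{\cdot}_\infty$ versus $d_2$ bookkeeping.
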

	
	The proof of this lemma is classical and it corresponds to Lemma 7.23 in \cite{Velenik}. It relies on the fact that each set $\partial_{int}A$ and $\partial_{ext}A$ are connected and that the sites directly in contact with the contours are correct. Therefore there can be only one spin $\either \in \{0,1\}$ otherwise we would have two correct sites of opposite spin directly connected. 
	
	\begin{definition}
		Let $\omega \in \Omega_f$, we call a contour~$\gamma$ the pair $ (\overline{\gamma}, (\either_i)_{i \in \overline{\gamma}})$ where for all sites~$i \in \overline{\gamma}$, $\sigma(\omega, i) = \either_i$. We denote by $\Gamma(\omega)$ the set of all contours that appear with the configuration $\omega$. 
	\end{definition}
	
	Furthermore for a contour~$\gamma = (\overline{\gamma}, (\either_j)_{j \in \overline{\gamma}})$ we call the \emph{type} of  $\gamma$ the label of $ext(\overline{\gamma})$, $\Type(\gamma) := \Label(ext(\overline{\gamma}))$. And we call the interiors of a contour $\gamma$ the sets 
	\begin{equation*}
		\Int_\either \gamma = \bigcup_{\substack{A \neq ext(\overline{\gamma}) \\  \Label(A) = \either}} A  \quad  \text{ and } \quad \Int \gamma = \Int_0 \gamma \cup \Int_1 \gamma.
	\end{equation*}
	Let $\omega \in \Omega_f$ be a finite configuration, a contour $\gamma \in \Gamma(\omega)$ is said to be external when for any other contour $\gamma' \in \Gamma(\omega)$,  $\overline{\gamma} \subset ext(\overline{\gamma}')$. We denote by $\Gamma_{ext}$ the subset of $\Gamma$ comprised only of external contours. 
	
	\medskip
	
	Until now we have only considered collection of contours that can be achieved by a finite configuration of points. But classically in the Pirogov-Sinaï-Zahradnik theory we need to introduce abstract collection of contours which are not achievable by any configuration. This is due to the cluster expansion development of the partition function using geometrically compatible collection of contours.
	
	\begin{definition} An abstract set of contours
		is a set of contours $\{ \gamma_i = (\overline{\gamma}_i, (\either_j)_{j \in \overline{\gamma_i}}) , i \in I  \subset \N^*\}$ for which  each contour $\gamma_i$ is achievable for some configuration $\omega_i$. We do not assume the global achievability.  We denote by $\Gamma$ such set of contours. Moreover this set $\Gamma$ is called geometrically compatible if for all $\{i,j\} \subset I$,  $ d_\infty(\overline{\gamma}_i, \overline{\gamma}_j)>1$ and they all have the same type. Let $\Lambda \in \Z^d$ we denote by $\mathcal{C}^\either(\Lambda)$ the collection of geometrically compatible sets of contours of the type $\either$ such that $d_\infty(\overline{\gamma}_i, \Lambda^c)>1$. 
	\end{definition}
	
	\begin{figure}[H]
		
		\subfloat[] {
			\begin{tikzpicture}[scale=0.9]\label{fig_a}
				\def\grillesize{21} 
				\def\cellsize{0.4cm}   
				
				\foreach \x/\y in {3/3, 3/4, 3/5, 4/6, 4/7, 3/8,
					5/9, 5/10, 5/11, 6/12, 5/13, 4/14, 4/15, 3/16, 3/17, 5/18, 4/18, 4/17, 4/16, 5/15, 5/14, 4/13, 5/12, 4/11, 4/10, 4/9, 4/8, 5/7, 5/6, 4/5, 4/4, 4/3, 4/2, 5/2, 5/3, 6/2, 6/3, 7/3, 7/4, 8/3, 8/4, 9/3, 9/4, 10/2, 10/3, 11/2, 12/2, 11/3, 12/3, 12/4, 13/3, 13/4, 14/3, 14/4, 15/3, 15/4, 16/2, 16/3, 17/2, 17/3, 17/4, 18/3, 18/4, 18/5, 17/5, 18/6, 17/6, 18/7, 17/7, 16/8, 17/8, 16/9, 17/9, 16/10, 17/10, 17/11, 18/11, 17/12, 18/12, 17/13, 18/13, 16/14, 17/14, 16/15, 17/15, 17/16, 18/16, 17/17, 18/17, 5/17, 6/17, 6/18, 7/16, 7/17, 8/16, 8/17, 9/16, 9/17, 10/17, 10/18, 11/17, 11/18, 12/17, 12/18, 13/16, 13/17, 14/16, 14/17, 15/17, 15/18, 16/16, 16/17, 11/9, 11/10, 11/11, 12/10, 12/11, 10/10}{
					\fill[gray!50] 
					({\x*\cellsize}, {\y*\cellsize})
					rectangle 
					({(\x+1)*\cellsize}, {(\y+1)*\cellsize});
				}
				
				\foreach \x/\y in {2/2, 3/2, 3/1, 4/1, 5/1, 6/1, 7/1, 7/2, 8/2, 9/2, 9/1, 10/1, 11/1, 12/1, 13/1, 13/2, 14/2, 15/2, 15/1, 16/1, 17/1, 18/1, 18/2, 19/2, 19/3, 19/4, 19/5, 19/6, 19/7, 19/8, 18/8, 18/9, 18/10, 19/10, 19/11, 19/12, 19/13, 19/14, 18/14, 18/15, 19/15, 19/16, 19/17, 19/18, 18/18, 17/18, 16/18, 16/19, 15/19, 14/19, 14/18, 13/18, 13/19, 12/19, 11/19, 10/19, 9/19, 9/18, 8/18, 7/18, 7/19, 6/19, 5/19, 4/19, 3/19, 3/18, 2/18, 2/17, 2/16, 2/15, 3/15, 3/14, 3/13, 3/12, 4/12, 3/11, 3/10, 3/9, 2/9, 2/8, 2/7, 3/7, 3/6, 2/6, 2/5, 2/4, 2/3 } {
					\fill[blue!50] 
					({\x*\cellsize}, {\y*\cellsize})
					rectangle 
					({(\x+1)*\cellsize}, {(\y+1)*\cellsize});
				}
				
				\foreach \x/\y in {6/4, 5/4, 5/5, 6/5, 6/6, 6/7, 6/8, 5/8, 6/9, 6/10, 6/11, 7/11, 7/12, 7/13, 6/13, 6/14, 6/15, 6/16, 5/16, 7/15, 8/15, 9/15, 10/15, 10/16, 11/16, 12/16, 12/15, 13/15, 14/15, 15/15, 15/16, 15/14, 15/13, 16/13, 16/12, 16/11, 15/11, 15/10, 15/9, 15/8, 15/7, 16/7, 16/6, 16/5, 16/4, 15/5, 14/5, 13/5, 12/5, 11/5, 11/4, 10/4, 10/5, 9/5, 8/5, 7/5} {
					\fill[red!50] 
					({\x*\cellsize}, {\y*\cellsize})
					rectangle 
					({(\x+1)*\cellsize}, {(\y+1)*\cellsize});
				}
				
				\foreach \x/\y in {10/9, 9/9, 9/10, 9/11, 10/11, 10/12, 11/12, 12/12, 13/12, 13/11, 13/10, 13/9, 12/9, 12/8, 11/8, 10/8} {
					\fill[red!50] 
					({\x*\cellsize}, {\y*\cellsize})
					rectangle 
					({(\x+1)*\cellsize}, {(\y+1)*\cellsize});
				}
				\foreach \x in {0,1,...,\grillesize} {
					\draw[black, thin] (0, \x*\cellsize) -- (\grillesize*\cellsize, \x*\cellsize);
					\draw[black, thin] (\x*\cellsize, 0) -- (\x*\cellsize, \grillesize*\cellsize);
				}
				\node[ font=\bfseries\Large] at (11.5*\cellsize,10.5*\cellsize) {$\gamma_2$};
				
				\node[font=\bfseries\Large] at (11.5*\cellsize,17.5*\cellsize) {$\gamma_1$};
				
				\node[font=\bfseries\Large] at (11.5*\cellsize,14.5*\cellsize) {$A$};
			\end{tikzpicture}
		}
		\quad
		\subfloat[] {
			\begin{tikzpicture}[scale=0.9]\label{fig_b}
				\def\grillesize{21} 
				\def\cellsize{0.4cm}   
				
				\foreach \x/\y in { 3/3, 3/4, 3/5, 4/6, 4/7, 3/8, 5/9, 5/10, 5/11, 6/12, 5/13, 4/14, 4/15, 3/16, 3/17, 5/18, 4/18, 4/17, 4/16, 5/15, 5/14, 4/13, 5/12, 4/11, 4/10, 4/9, 4/8, 5/7, 5/6, 4/5, 4/4, 4/3, 4/2, 5/2, 5/3, 6/2, 6/3, 7/3, 7/4, 8/3, 8/4, 9/3, 9/4, 10/2, 10/3, 11/2, 11/3, 12/2, 12/3, 12/4, 13/3, 13/4, 14/3, 14/4, 15/3, 15/4, 16/2, 16/3, 17/2, 17/3, 17/4, 18/3, 18/4, 18/5, 17/5, 18/6, 17/6, 18/7, 17/7, 16/8, 17/8, 16/9, 17/9, 16/10, 17/10, 17/11, 18/11, 17/12, 18/12, 17/13, 18/13, 16/14, 17/14, 16/15, 17/15, 17/16, 18/16, 17/17, 18/17, 5/17, 6/17, 6/18, 7/16, 7/17, 8/16, 8/17, 9/16, 9/17, 10/17, 10/18, 11/17, 11/18, 12/17, 12/18, 13/16, 13/17, 14/16, 14/17, 15/17, 15/18, 16/16, 16/17, 11/9, 11/10, 11/11, 12/10, 12/11, 10/10} {
					\fill[gray!50] 
					({\x*\cellsize}, {\y*\cellsize})
					rectangle 
					({(\x+1)*\cellsize}, {(\y+1)*\cellsize});
				}
				
				\foreach \x/\y in {2/2, 3/2, 3/1, 4/1, 5/1, 6/1, 7/1, 7/2, 8/2, 9/2, 9/1, 10/1, 11/1, 12/1, 13/1, 13/2, 14/2, 15/2, 15/1, 16/1, 17/1, 18/1, 18/2, 19/2, 19/3, 19/4, 19/5, 19/6, 19/7, 19/8, 18/8, 18/9, 18/10, 19/10, 19/11, 19/12, 19/13, 19/14, 18/14, 18/15, 19/15, 19/16, 19/17, 19/18, 18/18, 17/18, 16/18, 16/19, 15/19, 14/19, 14/18, 13/18, 13/19, 12/19, 11/19, 10/19, 9/19, 9/18, 8/18, 7/18, 7/19, 6/19, 5/19, 4/19, 3/19, 3/18, 2/18, 2/17, 2/16, 2/15, 3/15, 3/14, 3/13, 3/12, 4/12, 3/11, 3/10, 3/9, 2/9, 2/8, 2/7, 3/7, 3/6, 2/6, 2/5, 2/4, 2/3 } {
					\fill[blue!50]
					({\x*\cellsize}, {\y*\cellsize})
					rectangle 
					({(\x+1)*\cellsize}, {(\y+1)*\cellsize});
				}
				
				\foreach \x/\y in {6/4, 5/4, 5/5, 6/5, 6/6, 6/7, 6/8, 5/8, 6/9, 6/10, 6/11, 7/11, 7/12, 7/13, 6/13, 6/14, 6/15, 6/16, 5/16, 7/15, 8/15, 9/15, 10/15, 10/16, 11/16, 12/16, 12/15, 13/15, 14/15, 15/15, 15/16, 15/14, 15/13, 16/13, 16/12, 16/11, 15/11, 15/10, 15/9, 15/8, 15/7, 16/7, 16/6, 16/5, 16/4, 15/5, 14/5, 13/5, 12/5, 11/4, 10/4, 11/5, 10/5, 9/5, 8/5, 7/5} {
					\fill[red!50] 
					({\x*\cellsize}, {\y*\cellsize})
					rectangle 
					({(\x+1)*\cellsize}, {(\y+1)*\cellsize});
				}
				
				\foreach \x/\y in {10/9, 9/9, 9/10, 9/11, 10/11, 10/12, 11/12, 12/12, 13/12, 13/11, 13/10, 13/9, 12/9, 12/8, 11/8, 10/8} {
					\fill[blue!50] 
					({\x*\cellsize}, {\y*\cellsize})
					rectangle 
					({(\x+1)*\cellsize}, {(\y+1)*\cellsize});
				}
				\foreach \x in {0,1,...,\grillesize} {
					\draw[black, thin] (0, \x*\cellsize) -- (\grillesize*\cellsize, \x*\cellsize);
					\draw[black, thin] (\x*\cellsize, 0) -- (\x*\cellsize, \grillesize*\cellsize);
				}
				\node[ font=\bfseries\Large] at (11.5*\cellsize,10.5*\cellsize) {$\gamma_2$};
				
				\node[font=\bfseries\Large] at (11.5*\cellsize,17.5*\cellsize) {$\gamma_1$};
				
				\node[font=\bfseries\Large] at (11.5*\cellsize,14.5*\cellsize) {$A$};
				
			\end{tikzpicture} 
		}
		\caption{The contour corresponds to the grey areas, while the blue and red squares represent the tiles at the boundary of the contour where the spins are $\either$ and $1-\either$, respectively. In Figure \ref{fig_a}, the contour $\Gamma =\{\gamma_1, \gamma_2\}$ is achievable by some configuration $\omega$ because the label of $A$ matches for both $\gamma_1$ and $\gamma_2$. In contrast, in Figure \ref{fig_b}, the contour $\Gamma = \{\gamma_1, \gamma_2\}$ is not globally achievable by any configuration. In this case, $\Gamma \in \mathcal{C}^\either(\Lambda)$, where the types of $\gamma_1$ and $\gamma_2$ are the same, but the labels of $A$ are mismatched. }
	\end{figure}
	
	We allow the set $\Gamma = \{ (\emptyset, \emptyset)\}$ to belong to the collection $\mathcal{C}^\either(\Lambda)$ for any $\Lambda$ which corresponds to the event where not a single contour appears in $\Lambda$. There are several interesting sub-collections of $\mathcal{C}^\either(\Lambda)$, one of them being the collection of sets such that all contours are external. 
	
	\begin{definition}
		Let $\Lambda \subset \Z^d$ finite, we denote by $\mathcal{C}_{ext}^\either(\Lambda) \subset \mathcal{C}^\either(\Lambda)$ the sub-collection of sets $\Gamma$ where any contours $\gamma \in \Gamma$ is external. 
	\end{definition}
	
	In a way, in the collection $\mathcal{C}_{ext}^\either(\Lambda)$ we are considering sets of contours where we have only one layer. In general, if we take a geometrically compatible abstract sets of contours $\Gamma$, a particular contour in this set can be encapsulated in the interior of another creating layers upon layers of contours. One method of exploration of the contours is by proceeding from the external layer and peel each layer to discover the other contours hidden under. Another sub-collection of $\mathcal{C}^\either(\Lambda)$ is the collection of sets such that for all contours the size of the interior is bounded. 
	
	\begin{definition}
		A contour $\gamma$ is of the class $k \in \N$ when $|\Int \gamma | =k$. Let $n \in \N$ and $\Lambda \subset \Z^d$ finite, we denote by $\mathcal{C}_n^\either(\Lambda) \subset \mathcal{C}^\either(\Lambda)$ the collection of contours $\Gamma$ such that $\forall \gamma \in \Gamma$, $\gamma$ is of the class $k\leq n$. 
	\end{definition}

	\subsection{Partition function and boundary condition} \label{subsection4.3}
	
	 For any $\Lambda \subset \Z^d$, let $\widehat{\Lambda} := \bigcup_{i \in \Lambda} T_i$. We consider two Quermass point processes with a free boundary and a wired boundary conditions. The probability measures~$P_{\Lambda}^\either$ associated with each boundary condition are given by 
	\begin{equation}\label{def_mesure_gibbs_condition_bord}
		P_\Lambda^\either(d\omega) = \frac{1}{Z_\Lambda^\either} e^{-\beta H_\Lambda(\omega)} \mathbbm{1}_{\{\forall i \in \partial_{int} \Lambda, \sigma(\omega, i) = \either\}} \Pi_{\widehat{\Lambda}}^z(d\omega)
	\end{equation}
	where 
	\begin{equation}
		Z_\Lambda^\either = \int e^{-\beta H_\Lambda(\omega)} \mathbbm{1}_{\{\forall i \in \partial_{int} \Lambda, \sigma(\omega, i) = \either\}} \Pi_{\widehat{\Lambda}}^z(d\omega). 
	\end{equation}
	We prove that the two infinite Gibbs measures obtained by taking the thermodynamic limit for each boundary condition yield different intensities. First, we have a standard lemma which ensures that the pressure does not depend on the boundary conditions.
	
	\begin{lemma}\label{lem_pression_free_boundary}
		For $\delta \le \nicefrac{R_0}{2\sqrt{d}}$, $L\ge \nicefrac{2R_1}{\delta}$ and any $\either \in \{0,1\}$ we have $\psi = \psi^\either$ where $\psi^\either := \underset{n \to \infty}{\lim} \frac{\ln Z_{\Lambda_n}^\either}{\beta |\Lambda_n| \delta^d}$ and $\Lambda_n = [\![-n,n]\!]^d$.
	\end{lemma}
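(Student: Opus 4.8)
The plan is to sandwich each constrained partition function $Z_{\Lambda_n}^\either$ between the unconstrained partition function $Z_{\Lambda_n}^{\mathrm{free}} := \int e^{-\beta H_{\Lambda_n}(\omega)}\Pi^z_{\widehat{\Lambda_n}}(d\omega)$ on the same region and to argue that the boundary spin constraint only perturbs $\ln Z$ by a surface term of order $|\partial_{int}\Lambda_n| = O(n^{d-1})$, which vanishes after division by $|\Lambda_n|\delta^d \sim n^d$. The two ingredients I would isolate first are: (i) a \emph{uniform} bound $|H_i(\omega)|\le C_0$ on the tile energies, valid for every $\omega$ and every $i$, which follows from the geometry (the volume term is at most $\delta^d$, while the lower bound $R_0$ on the radii caps the number of surface and Euler--Poincaré features a halo can exhibit inside the finitely many bounded facets meeting a single tile); and (ii) the finite range statement that $H_i(\omega)$ depends on $\omega$ only through the tiles within distance $2R_1$ of $T_i$, i.e. within $\lceil 2R_1/\delta\rceil\le L$ tiles. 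Together (i) and (ii) imply that inserting or deleting the points lying in the boundary shell $\widehat B$, with $B:=\partial_{int}\Lambda_n$, changes $H_{\Lambda_n}$ by at most $2C_0$ times the number of affected tiles, and since only tiles within range $2R_1$ of the shell are affected, this number is $O(n^{d-1})$.

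For the empty boundary condition $\either=0$, I would use the restriction property of the Poisson process to factor $\Pi^z_{\widehat{\Lambda_n}}=\Pi^z_{\widehat B}\otimes\Pi^z_{\widehat I}$ along the split of $\Lambda_n$ into its boundary $B$ and interior $I:=\Lambda_n\setminus B$; the spins on $B$ depend only on $\omega_B$ because the tiles are disjoint. Setting $\omega_B=\emptyset$ contributes the explicit factor $\Pi^z_{\widehat B}(\{\emptyset\})=e^{-z|\widehat B|Q([R_0,R_1])}$, and the uniform energy bound lets me replace $H_{\Lambda_n}(\omega_B\cup\omega_I)$ by $H_{\Lambda_n}(\omega_I)$ up to $e^{\pm\beta C' n^{d-1}}$, whence $|\ln Z^0_{\Lambda_n}-\ln Z^{\mathrm{free}}_{\Lambda_n}|\le C'' n^{d-1}$. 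For the wired condition $\either=1$ the same factorisation applies, but the boundary integral becomes $\int \mathbbm{1}_{\{\text{every tile of } B \text{ occupied}\}}\,\Pi^z_{\widehat B}(d\omega_B)=p_0^{|B|}$, where $p_0=1-e^{-z\delta^d Q([R_0,R_1])}\in(0,1)$ is the probability that a single tile carries at least one point; since $p_0^{|B|}=e^{-O(n^{d-1})}$, combining with the same energy comparison yields $|\ln Z^1_{\Lambda_n}-\ln Z^{\mathrm{free}}_{\Lambda_n}|\le C''' n^{d-1}$. Dividing by $\beta|\Lambda_n|\delta^d$ and letting $n\to\infty$ gives $\psi^0=\psi^1=\psi^{\mathrm{free}}$, the common value being the limit of the unconstrained partition functions.

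It then remains to identify $\psi^{\mathrm{free}}$ with the pressure $\psi$ of the statement. Here I would first replace $H_{\Lambda_n}=\sum_{i\in\Lambda_n}H_i$ by the full Hamiltonian $H$: for $\omega$ supported in $\widehat{\Lambda_n}$ the decomposition lemma gives $H(\omega)-H_{\Lambda_n}(\omega)=\sum_{i\notin\Lambda_n}H_i(\omega)$, and by finite range only the $O(n^{d-1})$ exterior tiles within distance $R_1$ of $\widehat{\Lambda_n}$ contribute, each bounded by $C_0$, so this is again a surface correction. After this replacement $Z^{\mathrm{free}}_{\Lambda_n}$ becomes the genuine free-boundary partition function $\int e^{-\beta H}\Pi^z_{\widehat{\Lambda_n}}$ on the box $\widehat{\Lambda_n}$, whose Lebesgue volume is exactly $|\Lambda_n|\delta^d$. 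Since $(\widehat{\Lambda_n})$ and $(\Delta_n)=([-n,n]^d)$ are both van~Hove sequences of boxes exhausting $\R^d$, the subadditivity argument that already guarantees the existence of $\psi$ also guarantees that the free-boundary pressure per unit volume is the same along both sequences; hence $\psi^{\mathrm{free}}=\psi$.

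The main obstacle I anticipate is the lower bound in the wired case $\either=1$: unlike the empty condition, occupation cannot be realised by a single explicit Poisson atom, so one must simultaneously control the combinatorial cost $p_0^{|B|}$ of occupying every boundary tile and the energetic cost of the inserted points. Both are kept at order $n^{d-1}$ precisely because the radius lower bound $R_0$ and the tile size $\delta\le R_0/(2\sqrt d)$ make $p_0$ bounded away from $0$ and the per-tile energy uniformly bounded. This is exactly where the two hypotheses enter: $\delta\le R_0/(2\sqrt d)$ ensures the saturation value $H_i=\delta^d$ on an occupied tile and hence the uniform bound on $H_i$, while $L\ge 2R_1/\delta$ makes the shell $\partial_{int}\Lambda_n$ thick enough to contain the full interaction range, so that the interior energy is genuinely insulated from the boundary modification.
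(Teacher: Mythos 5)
Your overall strategy (boundary conditions cost only a surface term) has the right shape, and your final paragraph correctly identifies where the hypotheses $\delta\le R_0/(2\sqrt d)$ and $L\ge 2R_1/\delta$ enter. However, your ingredient (i) --- the uniform two-sided bound $|H_i(\omega)|\le C_0$ --- is false in dimension $d=2$ when $\theta_2>0$, which is precisely the case the lemma must cover. The volume and surface parts of $H_i$ are indeed uniformly bounded (the surface by the spherical-sector argument used in the paper, which gives $\mathcal{S}\le \frac{d}{R_0}\times$ the volume of an enlarged region, not by ``capping the number of features''), but $\chi_i$ is not: take $N$ balls of radius $R_0$ with centres just outside $\overline{T}_i$, each poking a tiny lens into $\overline{T}_i$ through an edge that is \emph{not} a facet attributed to $T_i$ (the top or right edge, since the tiles are half-open). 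The lenses can be made pairwise disjoint, so $\chi(L(\omega)\cap\overline{T}_i)=N$ while every lower-dimensional facet term in $\chi_i$ vanishes; hence $\chi_i(L(\omega))=N$ and $H_i(\omega)\le \delta^d+|\theta_1|C-\theta_2N\to-\infty$. Since $\omega_B$ under $\Pi^z_{\widehat B}$ carries unboundedly many points, the step ``replace $H_{\Lambda_n}(\omega_B\cup\omega_I)$ by $H_{\Lambda_n}(\omega_I)$ up to $e^{\pm\beta C'n^{d-1}}$'' has no uniform justification, and both the comparison of $Z^{\mathrm{free}}_{\Lambda_n}$ with $Z^0_{\Lambda_n}$ and the lower bound on $Z^1_{\Lambda_n}$ break down. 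A repair exists (use the stability $H\ge -CN$ together with exponential moments of Poisson counts in the shell, rather than a uniform energy bound), but that is not what you wrote, and it still leaves the cross-terms between $\omega_B$ and $\omega_I$ near the inner edge of the shell to control.

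The paper sidesteps all of this by never proving a two-sided estimate. It chains three one-sided inequalities $\psi\ge\psi^{(1)}\ge\psi^{(0)}\ge\psi$, each obtained by restricting a partition function to an explicit event on which the energy is bounded above by a surface term using the geometry of \emph{that event}: when every tile of the shell is occupied, the halo covers the shell, so no holes are created there ($\chi\ge 0$, hence $-\theta_2\chi\le 0$) and the surface is controlled by the sector argument; and an additional empty buffer ring of width $L$ inserted between the wired shell and the interior (the event $E_n$) decouples the two, so no cross-terms arise. To make your argument rigorous you should either adopt this one-sided structure, or weaken your ingredient (i) to estimates that are only required to hold on the configurations you actually integrate over.
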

	
	\begin{proof}
		For any configuration $\omega \in \Omega_f$ such that for all sites $i \in \partial_{int} \Lambda_n, \sigma(\omega,i) = 1$ and $\omega_{\Lambda_n^c} = \emptyset$ we know that no holes are created by the halo outside of $\Lambda_n$ and therefore
		\begin{equation*}
			H_{\partial_{ext} \Lambda_n} (\omega) \leq \begin{cases}
				|\partial_{ext} \Lambda_n | \delta^d & \text{ when } \theta_1 \leq 0 \\
				|\partial_{ext} \Lambda_n | \delta^d + \theta_1 \mathcal{S}_{\partial_{ext}\Lambda_n}(L(\omega))& \text{ when } \theta_1 >0
			\end{cases}.
		\end{equation*}
		The boundary of the halo $L(\omega)$ outside $\Lambda_n$, appearing in the computation of $\mathcal{S}_{\partial_{ext}\Lambda_n}(L(\omega))$, is the union of spherical caps built via some marked points $(x_1,r_1), \dots, (x_m, r_m) \in \omega$. We denote by $\alpha_i\in[0,1]$ the ratio of the surface of the $i$th spherical cap with respect to the total surface of its sphere. We have
		\begin{align*}
			\mathcal{S}_{\partial_{ext} \Lambda_n}(L(\omega)) &= \sum_{i = 1}^m \alpha_i \mathcal{S}(B(x_i, r_i)) \\
			& = \sum_{i = 1}^m \alpha_i \frac{\mathcal{S}(B(x_i, r_i))}{\mathcal{V}(B(x_i, r_i))} \mathcal{V}(B(x_i, r_i)) \\
			& \leq \frac{\mathcal{S}(B(0, 1))}{R_0\mathcal{V}(B(0, 1))} \sum_{i = 1}^m \alpha_i \mathcal{V}(B(x_i, r_i)) \\
			& \leq \frac{\mathcal{S}(B(0, 1))}{R_0\mathcal{V}(B(0, 1))} |\partial_{int} \Lambda_n \cup \partial_{ext} \Lambda_n |\delta^d.
		\end{align*}
		As a consequence there exists $c>0$ such that we have
		\begin{align*}
			Z_{\widehat{\Lambda}_n} & \geq \int e^{-\beta H_{\partial_{ext} \Lambda_n} (\omega) } e^{-\beta H_{\Lambda_n} (\omega) } \mathbbm{1}_{\{\forall i \in \partial_{int}\Lambda_n, \sigma(\omega, i)= 1\}} \Pi_{\widehat{\Lambda}_n}^z(d\omega) \\
			& \geq e^{-c |\partial_{ext} \Lambda_n \cup \partial_{int} \Lambda_n| \delta^d } Z_{\Lambda_n}^{(1)}.
		\end{align*}
		Therefore we have $\psi \geq \psi^{(1)}$. Let us consider the following event 
		\begin{equation*}
			E_n = \{ \omega \in \Omega_f, \forall i \in \Lambda_n, L < d_2(i, \Lambda_n^c) \leq 2L, \sigma(\omega,i) = 0\}.
		\end{equation*}
		Since $\delta \le \nicefrac{R_0}{2 \sqrt{d}}$ and $L \ge \frac{2R_1}{\delta}$ we have
		\begin{align*}
			Z_{\Lambda_n}^{(1)} \geq \int e^{-\beta (|\partial_{int} \Lambda_n| + H_{\partial_{int} \Lambda_{n-L}}(\omega_{\widehat{\partial_{int} \Lambda_n}}))} \mathbbm{1}_{\{\forall i \in \partial_{int}\Lambda_n, \sigma(\omega, i)= 1\}} e^{-\beta H_{\Lambda_{n-L}}(\omega_{\widehat{\Lambda}_{n-L}})} \mathbbm{1}_{E_n} \Pi_{\widehat{\Lambda}_n}^z(d\omega).
		\end{align*}
		Similarly as in the previous case we can find $c>0$ such that 
		\begin{equation*}
			H_{\partial_{int} \Lambda_{n-L}}(\omega_{\widehat{\partial_{int} \Lambda_n}}) \leq c |\partial_{int} \Lambda_n \cup \partial_{ext} \Lambda_n| \delta^d 
		\end{equation*}
		and therefore we get
		\begin{equation*}
			Z_{\Lambda_n}^{(1)} \geq g_1^{|\partial_{int} \Lambda_n|} e^{-\beta c |\partial_{int} \Lambda_n \cup \partial_{ext} \Lambda_n| \delta^d } Z_{\Lambda_{n-L}}^{(0)}. 
		\end{equation*}
		This implies that $\psi^{(1)} \geq \psi^{(0)}$. Finally we simply have that 
		\begin{equation*}
			Z_{\Lambda_n}^{(0)} \geq g_0^{|\partial_{int} \Lambda_n|} Z_{\Lambda_{n-L}}.
		\end{equation*}
		Therefore $\psi^{(0)} \geq \psi$ and this finishes the proof of the lemma. 
	\end{proof}

	We have also a crucial proposition which provides a representation of the partition function as a polymer development.   
	
	\begin{proposition}[Polymer development]
		Let $R_0 \geq 2\delta \sqrt{d}>0$  and $\delta L>2R_1$, for all $\Lambda \subset \Z^d$ finite we have 
		\begin{equation*}
			\Phi_\Lambda^\either := g_\either^{-|\Lambda|} Z_\Lambda^\either = \sum_{\Gamma \in  \mathcal{C}^\either(\Lambda)}  \prod_{\gamma \in \Gamma} w_{\gamma}^\either
		\end{equation*}
		where
		\begin{itemize}
			\item $g_0 = e^{-z\delta^d}$ and $g_1 = e^{-\beta \delta^d}(1-e^{-z\delta^d})$ \\
			\item $w_\gamma^\either$ is called the weight of the contour $\gamma$ and if we denote $\#* := 1 - \#$, it is given by the following formula $$w_{\gamma}^\either = g_\either^{-|\overline{\gamma}|} I_{\gamma} \frac{Z_{\Int_{\either^*} \gamma}^{\either^*}}{Z_{\Int_{\either^*} \gamma}^{\either}},$$ \\
			\item $I_{\gamma} = \int e^{-\beta H_{\overline{\gamma}} (\omega)} \mathbbm{1}_{\left(\forall i \in \overline{\gamma}, \sigma(\omega,i)=\sigma_i \right)} \Pi_{\widehat{\gamma}}^z(d\omega)$.
		\end{itemize}
	\end{proposition}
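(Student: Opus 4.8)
\emph{Strategy.} The plan is to prove the identity by induction on $|\Lambda|$, decomposing $Z_\Lambda^\either$ according to the family of \emph{external} contours of a configuration and recursing into their interiors. First I would record the single-tile computation that explains the normalisation $g_\either$: for a site $i$ that is $\either$-correct, the saturation property freezes the local energy, $H_i(\omega)=0$ when $\either=0$ (the $L$-neighbourhood is empty, and by $\delta L>2R_1$ nothing further can reach $T_i$) and $H_i(\omega)=\delta^d$ when $\either=1$ (by $R_0\ge 2\delta\sqrt d$ the occupied neighbours cover $T_i$ together with its facets, so $\Sur_i=\chi_i=0$). Integrating $\Pi^z_{T_i}$ against the spin constraint then gives exactly $e^{-z\delta^d}=g_0$ in the empty case and $e^{-\beta\delta^d}(1-e^{-z\delta^d})=g_1$ in the occupied case. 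Thus $g_\either$ is the partition function of a single $\either$-correct tile.

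\emph{External decomposition.} Next I would expand $Z_\Lambda^\either$ over families $\Gamma_{ext}$ of mutually external contours of type $\either$. For fixed $\Gamma_{ext}$, partition $\Lambda$ into the exterior region (all $\either$-correct), the supports $\overline\gamma$, and the interior components $A\subset\Int\gamma$. The crucial factorisation is that, by saturation together with the finite range $2R_1<\delta L$: (i) each correct tile in the exterior contributes an independent factor $g_\either$, so the exterior yields $g_\either^{|ext|}$; (ii) the integral over the support tiles factorises as $\prod_\gamma I_\gamma$, because the collar of saturated correct tiles surrounding $\overline\gamma$ \emph{screens} $H_{\overline\gamma}$, making it a function of $\omega_{\widehat\gamma}$ and of the (determined) boundary labels only; and (iii) each interior component $A$ recurses as $Z_A^{\Label(A)}$ with its own boundary condition. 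This gives
$$Z_\Lambda^\either=\sum_{\Gamma_{ext}}g_\either^{|ext|}\prod_{\gamma\in\Gamma_{ext}}\Bigl(I_\gamma\prod_{A\subset\Int_\either\gamma}Z_A^\either\prod_{A\subset\Int_{\either^*}\gamma}Z_A^{\either^*}\Bigr).$$

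\emph{Reweighting and closing the induction.} Then I would perform the Zahradnik reweighting. Since the components of $\Int_{\either^*}\gamma$ do not interact (finite range again), one has $\prod_{A\subset\Int_{\either^*}\gamma}Z_A^{\either^*}=\frac{Z^{\either^*}_{\Int_{\either^*}\gamma}}{Z^\either_{\Int_{\either^*}\gamma}}\prod_{A\subset\Int_{\either^*}\gamma}Z_A^\either$, which trades every $\either^*$-interior for a type-$\either$ partition function at the cost of the ratio in $w_\gamma^\either$. Using $|ext|=|\Lambda|-\sum_\gamma(|\overline\gamma|+|\Int\gamma|)$ to absorb the powers of $g_\either$, dividing by $g_\either^{|\Lambda|}$, and setting $w_\gamma^\either=g_\either^{-|\overline\gamma|}I_\gamma\,Z^{\either^*}_{\Int_{\either^*}\gamma}/Z^\either_{\Int_{\either^*}\gamma}$, every interior component now carries a factor $g_\either^{-|A|}Z_A^\either=\Phi_A^\either$ with $|A|<|\Lambda|$, to which the induction hypothesis applies. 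A bijection between the data (an external family $\Gamma_{ext}$ of type $\either$ plus a geometrically compatible type-$\either$ family inside each interior component) and an arbitrary element of $\mathcal{C}^\either(\Lambda)$ then collapses the nested sums into $\sum_{\Gamma\in\mathcal{C}^\either(\Lambda)}\prod_{\gamma\in\Gamma}w_\gamma^\either$, which is the claim.

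\emph{Main obstacle.} The heart of the proof is step (ii): rigorously establishing the screening, namely that the collar of correct tiles around $\overline\gamma$ is thick enough (exceeding the interaction range $R_1/\delta$) and that saturation genuinely freezes the halo there, so that the supports of distinct external contours decouple and $H_{\overline\gamma}$ depends only on $\omega_{\widehat\gamma}$. This is exactly where the hypotheses $R_0\ge 2\delta\sqrt d$ and $\delta L>2R_1$ are consumed, and it is the geometric crux; the subsequent algebra (the reweighting and the family bijection) is a formal manipulation once the factorisation is secured.
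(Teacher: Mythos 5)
Your proposal is correct and follows essentially the same route as the paper: decomposition over external contours, factorisation of the integral via the saturation and finite-range conditions $R_0\ge 2\delta\sqrt d$ and $\delta L>2R_1$, identification of the single-tile factors $g_0$ and $g_1$, the Zahradnik multiplication-and-division by $Z^{\either}_{\Int_{\either^*}\gamma}$, and recursion into the interiors (which the paper phrases as iterating the computation on $\Phi^{\either}_{\Int\gamma}$ rather than as a formal induction on $|\Lambda|$, but this is the same argument). You also correctly locate the geometric crux in the screening of the contour supports by the collar of correct tiles.
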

	
	\begin{proof}
		We follow a similar development done in Chapter 7 in \cite{Velenik} with an adaptation to the setting of our model where the main difference is that the states of sites are random and have to be integrated under the Poisson measure. We can decompose the partition function $Z_\Lambda^\either$ according to the external contours $\mathcal{C}_{ext}^\either(\Lambda)$ and we have 
		\begin{align*}
			Z_\Lambda^\either = \sum_{\Gamma \in \mathcal{C}_{ext}^\either(\Lambda)} \int e^{-\beta H_{\Lambda}(\omega)} \mathbbm{1}_{\{\forall i \in \partial_{int}\Lambda, \sigma(\omega, i)= \either\}} \mathbbm{1}_{\{\Gamma_{ext}(\omega) = \Gamma\}} \Pi_{\widehat{\Lambda}}^z(d\omega).
		\end{align*}
		For any $\Gamma \in \mathcal{C}_{ext}^\either(\Lambda)$ we can do a partition of $\Lambda$ in the following way
		\begin{equation*}
			\Lambda =  \Lambda_{ext} \bigcup_{\substack{\gamma \in \Gamma}} \left( \gamma \cup \Int_{0}\gamma \cup \Int_{1}\gamma \right) 
		\end{equation*}
		where $\Lambda_{ext} = \bigcap_{{\gamma} \in \Gamma} ext(\overline{\gamma}) \cap \Lambda$. For any finite configuration $\omega \in \Omega_f$ such that $\Gamma_{ext}(\omega) = \Gamma$,  we have 
		\begin{align}
			&H_{\Lambda_{ext}} (\omega) = H_{\Lambda_{ext}} ( \omega_{{\widehat{\Lambda}}_{ext}}) = \mathcal{V}({{\widehat{\Lambda}}_{ext}}) \mathbbm{1}_{(\either=1)} \label{energy_local_gamma_1}\\
			&H_{\overline{\gamma}} (\omega) = H_{\overline{\gamma}} (\omega_{\widehat{\gamma}}) \label{energy_local_gamma_2}\\
			&H_{\Int_\either \gamma}(\omega) = H_{\Int_\either \gamma}(\omega_{\widehat{\Int_\either \gamma}}). \label{energy_local_gamma_3}
		\end{align}
		By construction, we know that for all $i \in \Lambda_{ext}, \sigma(\omega, i) = \either$ and $i$ is $\either$-correct. First case $\either = 1$, since $R_0 \geq \delta \sqrt{d}$ if $\omega_{T_i} \neq \emptyset$ implies that $H_i(\omega) = H_i(\omega_{T_i}) = \mathcal{V}(T_i)$. Second case $\either = 0$, since the sites $i$ in $\Lambda_{ext}$ are $0$-correct and that $\delta L > 2 R_1$ then $H_i(\omega) = H_i(\omega_{T_i}) = 0$.
		
		Furthermore, we know that a tile adjacent to an external contour $\gamma$ is correct.  For $A= \Int_0 \gamma $  or $A = \Lambda_{ext}$ when $\either = 0$ , because of the previous fact we know that $d_2(\omega_{\widehat{A}}, \widehat{\gamma}) > 2R_1$ and $d_2(\omega_{\widehat{\gamma}}, \widehat{A}) > 2R_1$. Therefore we have $L(\omega_{\widehat{\gamma}}) \cap \widehat{A} = \emptyset$ and $L(\omega_{\widehat{A}}) \cap \widehat{\gamma}= \emptyset$. 
		And if we consider $ B = \Int_1 \gamma $ or $B = \Lambda_{ext}$ when $\either=1$, the energy of the tiles in $\partial_{int}B$ and $\partial_{ext}B$ are determined since we have a point in those tiles and that $R_0  \geq \delta \sqrt{d}$. Furthermore,  since $\delta  L > 2R_1$  what is happening in the tiles $\overline{\gamma} \backslash \partial_{ext}B$ doesn't have any consequence on $B$ and vice versa what is happening in $B \backslash \partial_{int}B$ doesn't affect the tiles in $\overline{\gamma}$. Using these observations on the way the energy behaves according the contours and using the independence of Poisson point process in disjoint areas we have
		\begin{align}
			Z_\Lambda^\either =& \sum_{\Gamma \in \mathcal{C}_{ext}^\either(\Lambda)} \left( \int e^{-\beta H_{\Lambda_{ext}}(\omega)} \mathbbm{1}_{\{ \forall i \in \Lambda_{ext}, \sigma(\omega,i)=\either \}} \Pi_{\widehat{\Lambda_{ext}}}^z(d\omega) \right) \nonumber \\
			&\times \prod_{\gamma \in \Gamma} \left( \int e^{-\beta H_{\overline{\gamma}} (\omega)} \mathbbm{1}_{\{ \forall i \in \overline{\gamma}, \sigma(\omega, i) = \sigma_i \}} \Pi_{\widehat{\overline{\gamma}}}^z(d\omega) \right) \nonumber \\
			& \times \prod_{\either^*} \left( \int e^{-\beta H_{\Int_{\either^*} \gamma}(\omega)} \mathbbm{1}_{\{ \forall i \in \partial_{int} \Int_{\either^*} \gamma, \sigma(\omega, i) = \either^* \}}	 \Pi_{\widehat{\Int_{\either^*} \gamma}}^z(d\omega) \right). \label{eq_decomposition_product_integrals}
		\end{align}
		In this development we recognize the partition function $Z_{\Int_{\either^*} \gamma}^{\either^*}$ and the term $I_\gamma$. At the end, we have by independence of Poisson point process in disjoint areas
		\begin{align*}
			\int e^{-\beta H_{\Lambda_{ext}}(\omega)} \mathbbm{1}_{\{ \forall i \in \Lambda_{ext}, \sigma(\omega,i)=\either \}} \Pi_{\widehat{\Lambda_{ext}}}^z(d\omega) = \left( \int e^{-\beta H_0(\omega)} \mathbbm{1}_{\{ \sigma(\omega,0)=\either \}} \Pi_{T_0}^z(d\omega)\right)^{|\Lambda_{ext}|}.
		\end{align*}
		According to the value $\either \in \{ 0,1\}$ we have 
		\begin{align*}
			&\int e^{-\beta H_0(\omega)} \mathbbm{1}_{\{ \sigma(\omega,0)=1 \}} \Pi_{T_0}^z(d\omega) = e^{-\beta \delta^d}(1-e^{-z\delta^d}) = g_1\\
			&\int e^{-\beta H_0(\omega)} \mathbbm{1}_{\{ \sigma(\omega,0)=0 \}} \Pi_{T_0}^z(d\omega) = e^{-z \delta^d} = g_0.
		\end{align*}
		In summary, we have the following 
		\begin{align*}
			Z_\Lambda^\either &= \sum_{\Gamma \in \mathcal{C}_{ext}^\either(\Lambda)} g_{\either}^{|\Lambda_{ext}|} \prod_{\gamma \in \Gamma} I_{\gamma} Z_{\Int_\either \gamma}^{\either} Z_{\Int_{\either^*} \gamma}^{\either^*} \\
			&= \sum_{\Gamma \in \mathcal{C}_{ext}^\either(\Lambda)} g_{\either}^{|\Lambda_{ext}|} \prod_{\gamma \in \Gamma} I_{\gamma} \frac{Z_{\Int_{\either^*} \gamma}^{\either^*}}{Z_{\Int_{\either^*} \gamma}^{\either}} Z_{\Int_\either \gamma}^{\either} Z_{\Int_{\either^*} \gamma}^{\either} \\
			&= \sum_{\Gamma \in \mathcal{C}_{ext}^\either(\Lambda)} g_{\either}^{|\Lambda_{ext}|} \prod_{\gamma \in \Gamma} g_{\either}^{|\overline{\gamma}|} w_\gamma^\either Z_{\Int_\either \gamma}^{\either} Z_{\Int_{\either^*} \gamma}^{\either}.
		\end{align*}
		From the properties our energy, we know that $0 <Z_{\Int_{\either^*} \gamma}^{\either} < +\infty$ and thus the quotient introduced above is allowed. In regards of the quantity $\Phi_\Lambda^\either$ we have
		\begin{align*}
			\Phi_\Lambda^\either = \sum_{\Gamma \in \mathcal{C}_{ext}^\either(\Lambda)}  \prod_{\gamma \in \Gamma}  w_\gamma^\either \Phi_{\Int_\either \gamma}^{\either} \Phi_{\Int_{\either^*} \gamma}^{\either}.
		\end{align*}
		We can continue to iterate the same computation for $\Phi_{\Int_\either \gamma}^\either$ and for $\Phi_{\Int_{\either^*} \gamma}^\either$ until we empty the interior of the contours and thus we have 
		\begin{align*}
			\Phi_\Lambda^\either = \sum_{\Gamma \in \mathcal{C}^\either(\Lambda)}  \prod_{\gamma \in \Gamma}  w_\gamma^\either.
		\end{align*}
	\end{proof}
	
	\subsection{Energy and Peierls condition} \label{subsection4.4}
	
	The weights of any contours $\gamma$ is a difficult object to evaluate and our goal is to have a good exponential bound with respect the volume of the contour. Recall the expression of the weight
	\begin{align*}
		w_{\gamma}^\either = g_\either^{-|\overline{\gamma}|} I_{\gamma} \frac{Z_{\Int_{\either^*} \gamma}^{\either^*}}{Z_{\Int_{\either^*} \gamma}^{\either}}.
	\end{align*}	
	
	\begin{definition}
		Let $\tau > 0$, the weight of a contour $\gamma$ is said to be $\tau$-stable if
		\begin{equation*}
			w_{\gamma}^\either \leq e^{-\tau |\overline{\gamma}|}.
		\end{equation*}
	\end{definition}
	
	The ratio of the partition functions is the most difficult part to handle at this stage and will be further developed in the next section. For the moment, we want to control the quantity $I_{\gamma}$ where $\ln{I_{\gamma}}$ can be interpreted as the mean energy of the contour. The aim is to display some sort of Peierls condition on the mean energy of the contour and afterwards to prove the $\tau$-stability of the weights. 	
	
	\begin{proposition}\label{prop_peierls}
		For any $\theta_1 > - \theta_1^*:= -  R_0 \frac{\mathcal{V}(B(0,1))}{\mathcal{S}(B(0,1))}$ (and, in dimension $d=2$, any $\theta_2$ such that for $0 \leq \theta_2 < \theta_2^*(\theta_1)$, where $\theta_2^*>0$ is defined in \eqref{eq_theta_2_lim}), there exist $\delta \in \left] 0 , \nicefrac{R_0}{2 \sqrt{d}} \right[$, $K>0$ and $\rho_0>0$ such that for all contour $\gamma$ and $\beta>0$
		\begin{align*}
			I_{\gamma} &\leq g_0^{|\overline{\gamma}_0|} g_1^{|\overline{\gamma}_1|} e^{-\beta \rho_0 |\overline{\gamma}|}
		\end{align*}
		and 
		\begin{align*}
			\left| \frac{\partial I_\gamma}{\partial z} \right| \leq \left(1 + \frac{K}{1-e^{-z\delta^d}}\right) |\overline{\gamma}|\delta^d g_0^{|\overline{\gamma}_0|} g_1^{|\overline{\gamma}_1|} e^{- \beta \rho_0  |\overline{\gamma}|}.
		\end{align*}
	\end{proposition}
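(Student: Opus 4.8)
The plan is to derive both inequalities from a single geometric \emph{Peierls bound} on the localized energy,
\begin{equation}\label{eq_peierls_plan}
	H_{\overline{\gamma}}(\omega) \geq |\overline{\gamma}_1|\,\delta^d + \rho_0\,|\overline{\gamma}|,
\end{equation}
valid for every finite configuration $\omega$ realizing the contour $\gamma$, and then to combine it with explicit Poisson computations tile by tile. The starting observation is the \emph{saturation} property: since $\delta < \nicefrac{R_0}{2\sqrt d}$, the diameter $\delta\sqrt d$ of a tile is strictly smaller than $R_0$, so a single marked point placed in a spin-$1$ tile $T_i$ already covers $T_i$ entirely. Hence $\V(L(\omega)\cap T_i)=\delta^d$ for every $i\in\overline{\gamma}_1$, and the volume part of $H_{\overline{\gamma}}$ is at least $|\overline{\gamma}_1|\delta^d$ plus the volume by which the large balls invade the empty (spin-$0$) tiles near the interface.

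First I would establish \eqref{eq_peierls_plan}. Writing $H_{\overline{\gamma}}(\omega)=\V(L(\omega)\cap\widehat{\gamma})+\theta_1\Sur_{\overline{\gamma}}(L(\omega))-\theta_2\chi_{\overline{\gamma}}(L(\omega))$, the surface term is controlled through the surface-to-volume comparison for unions of balls of radius at least $R_0$ already employed in the proof of Lemma~\ref{lem_pression_free_boundary}: when $\theta_1\ge 0$ the surface only helps, and when $-\theta_1^*<\theta_1<0$ the condition $\theta_1>-\theta_1^*=-R_0\,\nicefrac{\V(B(0,1))}{\Sur(B(0,1))}$ guarantees that $\V+\theta_1\Sur$ remains a strictly positive multiple of the volume. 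In dimension $d=2$ the Euler term is then treated as a perturbation: since $|\chi|$ is controlled by the number of balls \cite{Kendall}, taking $\theta_2$ below the threshold $\theta_2^*(\theta_1)$ of \eqref{eq_theta_2_lim} ensures that $-\theta_2\chi$ cannot overcome the volume-plus-surface surplus. The genuinely geometric part is to show that this surplus is at least $\rho_0|\overline{\gamma}|$: because a contour tile is non-correct it lies within distance $L$ of a spin flip, and the width $2L\delta>4R_1$ of the contour region forces the halo boundary $\partial L(\omega)$, together with the band of empty tiles necessarily invaded by the large balls, to carry an amount of volume-plus-surface proportional to $|\overline{\gamma}|$; tuning $\delta$, $L$ and a sufficiently small $\rho_0>0$ (depending on $\delta$ and $L$) yields \eqref{eq_peierls_plan}. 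This last step is the main obstacle, since the estimate must be uniform over all contour shapes and all realizing configurations, in particular over contours containing large regions of empty tiles lying far from the interface yet still non-correct.

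Granting \eqref{eq_peierls_plan}, the first inequality is immediate: bounding $e^{-\beta H_{\overline{\gamma}}}\le e^{-\beta(|\overline{\gamma}_1|\delta^d+\rho_0|\overline{\gamma}|)}$ and integrating the remaining indicator factorizes by independence of the Poisson process on disjoint tiles, giving $\int \mathbbm{1}_{(\forall i\in\overline{\gamma},\,\sigma(\omega,i)=\sigma_i)}\,\Pi_{\widehat{\gamma}}^z(d\omega)=(e^{-z\delta^d})^{|\overline{\gamma}_0|}(1-e^{-z\delta^d})^{|\overline{\gamma}_1|}$, which combines with the prefactor to reconstruct exactly $g_0^{|\overline{\gamma}_0|}g_1^{|\overline{\gamma}_1|}e^{-\beta\rho_0|\overline{\gamma}|}$.

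For the derivative I would use that $z$ enters $I_\gamma$ only through the intensity of $\Pi_{\widehat{\gamma}}^z$, whose total mass on $\widehat{\gamma}$ equals $z|\overline{\gamma}|\delta^d$. The standard differentiation identity for Poisson functionals then gives
\begin{equation*}
	\frac{\partial I_\gamma}{\partial z}=\frac1z\int \bigl(N(\omega)-z|\overline{\gamma}|\delta^d\bigr)\,e^{-\beta H_{\overline{\gamma}}(\omega)}\,\mathbbm{1}_{(\forall i\in\overline{\gamma},\,\sigma(\omega,i)=\sigma_i)}\,\Pi_{\widehat{\gamma}}^z(d\omega),
\end{equation*}
where $N(\omega)$ is the number of points. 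Splitting $|N-z|\overline{\gamma}|\delta^d|\le N+z|\overline{\gamma}|\delta^d$, the second piece contributes $|\overline{\gamma}|\delta^d\,I_\gamma$, already bounded by the first inequality. For the first piece I again pull out $e^{-\beta(|\overline{\gamma}_1|\delta^d+\rho_0|\overline{\gamma}|)}$ via \eqref{eq_peierls_plan} and compute $\E_z[N;\,\sigma\text{ matches}]$ tile by tile: on a spin-$1$ tile $\E[N_i\mathbbm{1}_{N_i\ge 1}]=z\delta^d$ while $P(N_i\ge 1)=1-e^{-z\delta^d}$, so summing over the $|\overline{\gamma}_1|$ spin-$1$ tiles produces the factor $\frac{|\overline{\gamma}_1|\delta^d}{1-e^{-z\delta^d}}\,g_0^{|\overline{\gamma}_0|}g_1^{|\overline{\gamma}_1|}e^{-\beta\rho_0|\overline{\gamma}|}$. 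Adding the two pieces and using $|\overline{\gamma}_1|\le|\overline{\gamma}|$ yields the claimed bound with, for instance, $K=1$.
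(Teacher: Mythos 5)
Your reduction of both inequalities to the Peierls bound $H_{\overline{\gamma}}(\omega) \geq |\overline{\gamma}_1|\delta^d + \rho_0|\overline{\gamma}|$ is exactly the paper's strategy, and your Poisson computations downstream of that bound are correct: the tile-by-tile evaluation of $\int N\,\mathbbm{1}\,d\Pi^z_{\widehat\gamma}$ reproduces the paper's, and taking $K=1$ via $|\overline{\gamma}_1|\le|\overline{\gamma}|$ is legitimate where the paper gets $K=1-r_1$ from Lemma~\ref{lem-ratio-0-1}. The problem is that the Peierls bound itself --- which you rightly flag as ``the main obstacle'' --- is asserted rather than proved, and it is the entire content of the proposition. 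The source of the surplus $\rho_0|\overline{\gamma}|$ is a counting argument (Lemma~\ref{lem-domino-domino}): every spin-$1$ site of a contour lies within distance $L$ of a spin-$0$ site, hence of an adjacent pair $(i,j)$ with $\either_i=1$, $\either_j=0$; a greedy selection of such ``dominoes'' at mutual distance $4L$ covers $\overline{\gamma}$ by balls of radius $5L$ and yields at least $r_0|\overline{\gamma}|$ of them with $r_0=|B(0,5L)\cap\Z^d|^{-1}$. Since $R_0\ge 2\delta\sqrt d$, each domino contributes an \emph{empty} tile fully covered by the halo, i.e.\ an extra $\delta^d$ of volume beyond $|\overline{\gamma}_1|\delta^d$. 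Your phrase about ``the band of empty tiles necessarily invaded by the large balls'' gestures at this but supplies no uniform lower bound on how many such tiles there are.

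Two further points. For $-\theta_1^*<\theta_1<0$, saying that $\V+\theta_1\Sur$ ``remains a strictly positive multiple of the volume'' does not by itself produce the surplus: the paper runs a dichotomy on whether $\Sur_{\overline\gamma}(L(\omega))$ exceeds a threshold $t|\overline{\gamma}|$, using the domino gain when the surface is small and Lemma~\ref{lem-surface-domino} (the surface forces at least $\theta_1^\delta\,\Sur_{\overline\gamma}(L(\omega))$ of additional covered-but-empty volume, with $\theta_1^\delta\to\theta_1^*$ as $\delta\to0$) when it is large; this is where $\theta_1>-\theta_1^*$ and the choice of small $\delta$ actually enter. Finally, controlling ``$|\chi|$ by the number of balls'' is not merely vague but unusable: the number of points of $\omega$ in $\widehat\gamma$ is unbounded, while the Peierls estimate must hold uniformly over all realizing configurations. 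What is needed is $\chi_{\overline\gamma}(L(\omega))\le N_{cc}\le |\overline{\gamma}|\delta^d/\V(B(0,R_0))$, obtained by exhibiting pairwise disjoint balls of radius at least $R_0$ inside $\widehat\gamma$, one per connected component (Lemma~\ref{lem-chi}); the threshold $\theta_2^*(\theta_1)$ in \eqref{eq_theta_2_lim} is calibrated precisely against this packing bound and the domino density $r_0$.
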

	
	Before going to the proof of Proposition \ref{prop_peierls}, we need some intermediate geometrical results given in the four next lemmas. We start with the following observation. For any configuration $\omega \in \Omega$, if there exist $(i,j) \in (\Z^d)^2$ where $d_\infty(i,j)=1$ such that $\sigma(\omega, i)=1$ and $\sigma(\omega,j)=0$, we call this pair a domino. We assume that $ R_0  \geq 2\delta \sqrt{d} >0$. Therefore for any configuration $\omega$ and any domino $(i,j)$ we have $H_i(\omega)= H_j(\omega) = \delta^d$ because of the presence of a point inside the tile $T_i$ ensuring that the tiles $T_i$ and $T_j$ are covered. In particular the energy of this empty tile is positive and we call this property the energy from vacuum. We need to show that the proportion of such pair of tiles in the contour is at least proportional to the volume of the contour. 
	
	\begin{lemma}\label{lem-domino-domino}
		There exists $r_0 >0$ such that for any contour $\gamma$, the set of dominoes 
		\begin{equation*}
			D(\gamma) := \{ (i,j) \in \overline{\gamma}^2, d_\infty(i,j)=1, \either_i = 1, \either_j=0 \}
		\end{equation*}
		satisfies 
		\begin{equation*}
			| D(\gamma) | \geq r_0 |\overline{\gamma}|.
		\end{equation*}
	\end{lemma}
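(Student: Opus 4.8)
The plan is to prove the bound by a local charging argument: to each site $i \in \overline{\gamma}$ I attach a domino of $D(\gamma)$ lying within distance $L$ of $i$, and then I count how many sites can be charged to a single domino. First I record the elementary fact that makes dominoes ``live inside contours'': if $(p,q)$ is any pair with $d_\infty(p,q)=1$ and opposite spins, then each of $p,q$ sees both a $0$ and a $1$ within distance $1\le L$, so both are non-correct; being adjacent and non-correct, they lie in the same maximal connected component of $\overline{\Gamma}(\omega)$, i.e.\ in a single contour. Thus any domino I produce near $i$ is automatically counted by some $D(\gamma')$; the real work is to force it to be the \emph{same} contour $\gamma$.

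The heart of the argument is the following construction. Fix $i\in\overline{\gamma}$ and assume $\sigma(\omega,i)=1$ (the case $0$ is symmetric). Since $i$ is non-correct, $B_L(i)=\{j:\norm{i-j}\le L\}$ contains a site of spin $0$; let $a$ be one minimizing the graph distance to $i$, say $m=d_\infty(i,a)\le L$, and fix a shortest $G$-path $i=v_0,v_1,\dots,v_m=a$. By minimality every intermediate site $v_0,\dots,v_{m-1}$ has spin $1$, so the last edge $(v_{m-1},a)$ is a domino. I claim the whole path consists of non-correct sites: for $k\le m-1$ the ball $B_L(v_k)$ contains $v_k$ (spin $1$) and, because $d_\infty(v_k,a)=m-k\le m\le L$, also $a$ (spin $0$), while $v_m=a$ is non-correct by the opening remark. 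A connected path of non-correct sites issuing from $i$ stays inside the connected component of $i$, namely $\overline{\gamma}$; hence $(v_{m-1},a)\in D(\gamma)$ and its spin-$1$ endpoint satisfies $d_\infty(i,v_{m-1})\le L$. This defines a map $\Phi:\overline{\gamma}\to D(\gamma)$ sending $i$ to a domino whose endpoint of spin $\sigma(\omega,i)$ lies within distance $L$ of $i$.

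It then remains to bound the fibers of $\Phi$: if $\Phi(i)=(p,q)$ with $p$ of spin $1$ and $q$ of spin $0$, then $i\in B_L(p)\cup B_L(q)$, a set of at most $2(2L+1)^d$ sites. Summing $|\overline{\gamma}|=\sum_{(p,q)\in D(\gamma)}|\Phi^{-1}(p,q)|$ gives $|\overline{\gamma}|\le 2(2L+1)^d\,|D(\gamma)|$, i.e.\ the claim with $r_0=\bigl(2(2L+1)^d\bigr)^{-1}>0$, depending only on $d$ and $L$. The main obstacle, and the only genuinely non-routine point, is exactly the same-contour requirement resolved in the middle paragraph: a domino happening to sit near $i$ need not a priori belong to $\gamma$, and it is the choice of a \emph{nearest} opposite-spin site, together with the inequality $d_\infty(v_k,a)\le L$, that forces the whole connecting path to be non-correct and thus to lie in $\overline{\gamma}$. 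If the ball defining correctness is measured in a norm other than $\norm{\cdot}_\infty$, the same argument goes through after replacing $L$ by $\lceil c_d L\rceil$ with $c_d$ the norm-equivalence constant, affecting only the value of $r_0$.
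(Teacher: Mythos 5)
Your proof is correct, and it rests on the same geometric input as the paper's: a non-correct site sees an opposite-spin site within distance $L$, and the last edge of a shortest path to the \emph{nearest} such site is a domino lying in the same contour. Where you differ is in how the counting is organized. The paper runs a greedy packing: it selects spin-$1$ sites of $\overline{\gamma}$ pairwise separated by more than $4L$, extracts one domino per selected site, and then covers all of $\overline{\gamma}$ by balls of radius $5L$ around the selected sites, yielding $r_0 = |B(0,5L)\cap\Z^d|^{-1}$. You instead define a many-to-one charging map $\Phi:\overline{\gamma}\to D(\gamma)$ and bound its fibers by $2(2L+1)^d$, which is cleaner, gives a somewhat better constant, and avoids the bookkeeping of the iterative selection. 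A further merit of your write-up is that you actually justify the step the paper only asserts, namely that the opposite-spin site $j$ found near a contour site (and hence the resulting domino) belongs to the \emph{same} contour $\gamma$: your observation that every site on the shortest path is non-correct, so the path stays in the connected component $\overline{\gamma}$, is exactly the missing justification. Both arguments are valid; yours is the tighter and more self-contained of the two.
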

	
	\begin{proof}
		We start by choosing randomly in a contour $\gamma$ a site $k$ such that $\either_k= 1$. Since it is in a contour, it is non-correct, meaning that there is a site $j \in \gamma$, where $\either_j = 0$ and $d_2(k,j) \leq L$. We choose such $j$ such as it is the closest to $k$. Forcibly we have a site $i$ adjacent to $j$ such that $\either_i= 1$ ( at least in the direction of $k$). And we assign $S_1 =\{ k \}$ and $D_1 = \{ (i,j) \}$. We repeat the process to build $S_{n+1}$ and $D_{n+1}$ by choosing the points inside $\gamma \backslash \bigcup_{k \in S_n} B(k, 4L)$. There is $p \in \N$, the number of step until the process stops because there is a finite number of sites with the spin equal to 1 in a contour. We define $S(\gamma) = S_p$ and $D(\gamma) = D_p$. We know that at this point that $$\overline{\gamma}_1 := \{ i \in \overline{\gamma}, \either_i=1\} \subset \bigcup_{k \in S(\gamma)} B(k,4L) \cap \Z^d $$ and that by non-correctness of sites with spin 0 in the contour we have $$\overline{\gamma}_0 := \{ i \in \overline{\gamma}, \either_i=0\} \subset \bigcup_{k \in S(\gamma)} B(k, 5L) \cap \Z^d.$$ In summary we have
		\begin{equation*}
			\overline{\gamma} \subset \bigcup_{k \in S} B(k, 5L) \cap \Z^d.
		\end{equation*}
		Therefore the cardinals verify the following inequalities
		\begin{equation*}
			|\overline{\gamma}| \leq |S| |B(0,5L) \cap \Z^d|. 
		\end{equation*}
		By construction we have $|S(\gamma)| = |D(\gamma)|$. Hence the inequality we are interested in 
		\begin{equation*}
			|D(\gamma)| \geq r_0 |\overline{\gamma}|   \quad \text{where } \: r_0 = \frac{1}{|B(0,5L) \cap \Z^d|}.
		\end{equation*}
	\end{proof}
	
	
	For any contour $\gamma$ and any configuration $\omega$ that achieves this contour, using the dominoes we are able to find a non-negligible amount of empty tiles that is covered by the halo of the configuration. Another way to find such tiles in the contour is by counting the tiles covered by the halo which are close to the boundary of the halo. Indeed those tiles are guaranteed to be empty otherwise the boundary would be further away.
	
	\begin{lemma}\label{lem-surface-domino}
		Let $ R_0  \geq 2\delta \sqrt{d} >0$, and let us define $\theta_1^\delta$ such as
		\begin{equation*}
			\theta_1^\delta :=  \inf_{\substack{\omega \in \Omega_f \\ \gamma : \mathcal{S}_{\overline{\gamma}}(L(\omega)) >0 }} \left\{ \frac{ V_{\omega, \gamma, \delta}}{\mathcal{S}_{\overline{\gamma}}(L(\omega))} \right\} 
		\end{equation*}
		and 
		\begin{equation*}
			V_{\omega, \gamma, \delta} = \max \left\{\mathcal{V}(T_I), I \subset \overline{\gamma}, \forall i \in I, T_i \subset \partial L(\omega) \oplus B(0,R_0) \cap L(\omega) \right\}.
		\end{equation*}
		We have $\theta_1^\delta >0$ and  $\theta_1^\delta \underset{\delta \rightarrow 0}{\rightarrow} \theta_1^*$ where $\theta_1^* = R_0 \frac{\mathcal{V}(B(0,1))}{\mathcal{S}(B(0,1))}$. 
	\end{lemma}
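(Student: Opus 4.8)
The plan is to reduce the statement to a single continuous isoperimetric‑type inequality and then handle the two constants separately. Write $C(\omega):=\big(\partial L(\omega)\oplus B(0,R_0)\big)\cap L(\omega)$ for the inner collar of width $R_0$ of the halo, so that $V_{\omega,\gamma,\delta}=\delta^{d}\,N$, where $N$ is the number of tiles $T_i$, $i\in\overline\gamma$, that are entirely contained in $C(\omega)$. Up to a discretisation error these tiles fill $C(\omega)\cap\widehat{\overline\gamma}$ except for a layer of width $\sqrt d\,\delta$ around $\partial C(\omega)$, of volume bounded by a constant times $\delta$ times the relevant surface; hence the whole lemma will follow from the continuous estimate
\[
\theta_1^*\,\mathcal S_{\overline\gamma}(L(\omega))\ \le\ \mathcal V\big(C(\omega)\cap\widehat{\overline\gamma}\big),
\]
together with a matching test configuration and control of the $O(\delta)$ corrections. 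Here I treat $\mathcal S_{\overline\gamma}(L(\omega))$ as $\mathcal H^{d-1}\big(\partial L(\omega)\cap\widehat{\overline\gamma}\big)$, the two agreeing up to lower‑dimensional facet contributions by additivity of the Minkowski functionals.

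The arithmetic core is a single‑cap computation. On a sphere $\partial B(x,R)$ with $R\ge R_0$, a cap $\Sigma$ subtending solid angle $\Omega$ has area $\mathcal H^{d-1}(\Sigma)=\Omega R^{d-1}$, while the inward shell $\{x+\rho u:\ u\in U,\ \rho\in[R-R_0,R]\}$ obtained by pushing $\Sigma$ toward the centre by $R_0$ has volume $\Omega\int_{R-R_0}^{R}\rho^{d-1}\,d\rho=f(R)\,\mathcal H^{d-1}(\Sigma)$, where $f(R):=\frac{R^{d}-(R-R_0)^{d}}{d\,R^{d-1}}$. One checks that $f$ is increasing on $[R_0,\infty)$ with $f(R_0)=R_0/d=\theta_1^*$; hence each such inward shell lies in $C(\omega)$ and has volume at least $\theta_1^*$ times the area of the cap above it, with equality exactly when $R=R_0$. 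This already fixes both the sharp constant and the extremal geometry, namely a single ball of the smallest radius.

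To globalise this local fact I would foliate the collar by inward normals. With $u(p)=d(p,\partial L(\omega))$ on $L(\omega)$, the coarea formula gives $\mathcal V(C(\omega))=\int_0^{R_0}\mathcal H^{d-1}\big(\{u=t\}\big)\,dt$. Parametrising by the nearest‑point projection onto $\partial L(\omega)$, and using that the boundary of a union of balls is made of convex spherical caps whose principal curvatures with respect to the inward normal all equal $1/R_k\le 1/R_0$, the Jacobian of the inward flow is $(1-t/R_k)^{d-1}\ge(1-t/R_0)^{d-1}$ as long as the flow has not reached the medial axis. Since $\int_0^{R_0}(1-t/R_0)^{d-1}\,dt=R_0/d=\theta_1^*$, this yields $\mathcal V(C(\omega))\ge\theta_1^*\,\mathcal S(\partial L(\omega))$, with the localised version following after checking that the shells below the caps meeting $\widehat{\overline\gamma}$ stay inside the thickened contour, which is where the width parameter enters through $\delta L>2R_1>R_0$. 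For the matching upper bound on the infimum I would test with a configuration whose halo is a single ball of radius $R_0$ placed inside a contour: as $\delta\to0$ the admissible tiles fill the ball, so $V_{\omega,\gamma,\delta}\to\mathcal V(B(0,1))R_0^{d}$ and $\mathcal S_{\overline\gamma}\to\mathcal S(B(0,1))R_0^{d-1}$, whence $\theta_1^\delta\le\theta_1^*+o(1)$. Combined with the lower bound this delivers simultaneously $\theta_1^\delta>0$ and $\theta_1^\delta\to\theta_1^*$.

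The delicate point, and the main obstacle, is the medial axis in the foliation. The clean Jacobian bound only holds until an inward normal reaches the cut locus, and in thin regions — shallow necks where two balls barely overlap — the cut distance can be far smaller than $R_0$, so a naive cap‑by‑cap accounting loses volume. The resolution is that such losses are compensated globally: the volume of a thin neck is shared between its two bounding surfaces, and the constant $\theta_1^*$, being sharp only for the single smallest ball, survives this global bookkeeping (the peanut formed by two radius‑$R_0$ balls already has ratio strictly above $\theta_1^*$ for every separation). Making this rigorous — either by bounding the overlap multiplicity of the inward shells, or equivalently by bounding the inner parallel surface area $\mathcal H^{d-1}(\{u=t\})$ from below by $(1-t/R_0)^{d-1}\mathcal S(\partial L(\omega))$ using that the creases of $\partial L(\omega)$ are concave, so that across them the inward normals diverge rather than fold — is exactly where the hypotheses $R_0\ge 2\delta\sqrt d$ and the uniform lower bound $R_0$ on the radii are used.
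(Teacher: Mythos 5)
Your local computation coincides with the paper's: beneath a visible spherical cap of a ball of radius $r\ge R_0$, the radial shell of depth $R_0$ has volume $\frac{r^d-(r-R_0)^d}{d\,r^{d-1}}$ times the cap area, this factor is minimised at $r=R_0$ where it equals $\theta_1^*=R_0/d$, and your treatment of the discretisation error and of the matching test configuration (a single ball of radius $R_0$) is also the paper's. The problem is the globalisation step, which you yourself call ``the main obstacle'' and then do not close. The foliation by the distance function $u(p)=d(p,\partial L(\omega))$ and the nearest-point projection genuinely breaks at the cut locus, and none of your proposed repairs is carried out: the observation that creases of $\partial L(\omega)$ are outward ridges only shows the flow does not fold \emph{at} the creases, not that it survives to depth $R_0$ across thin necks; the assertion that losses are ``compensated globally'' is precisely the statement to be proved; and the hypothesis $R_0\ge 2\delta\sqrt d$ is irrelevant to this point (it only ensures the collar is thick enough, relative to the tile diameter, for the trimmed collar to contain whole tiles). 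As written, the lower bound $\liminf_{\delta\to 0}\theta_1^\delta\ge\theta_1^*$ is not established.

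The gap closes if you drop the nearest-point foliation and use the \emph{radial} shells toward the ball centres, which is what the paper's ``simple geometrical argument'' amounts to. For each ball $B(x_i,r_i)$ let $U_i\subset S^{d-1}$ be the set of directions $u$ with $x_i+r_iu\in\partial L(\omega)$, and set $S_i=\{x_i+\rho u:\ u\in U_i,\ \rho\in[r_i-R_0,r_i]\}$. Every point of $S_i$ is within distance $R_0$ of the boundary point radially above it, so $S_i$ lies in the collar; no cut-locus analysis is needed because the radial direction is not required to point to the \emph{nearest} boundary point. Moreover the $S_i$ are pairwise disjoint up to Lebesgue-null sets: if $q=x_i+(r_i-t)u=x_j+(r_j-s)v$ with $p_i:=x_i+r_iu\in\partial L(\omega)$, then $|p_i-x_j|\ge r_j$ because a visible point of sphere $i$ lies outside the open ball $B(x_j,r_j)$, while $|p_i-q|=t$ and $|q-x_j|=r_j-s$, so the triangle inequality gives $s\le t$; by symmetry $t=s$, with equality throughout, forcing $|p_i-x_j|=r_j$, i.e.\ $p_i\in\partial B(x_i,r_i)\cap\partial B(x_j,r_j)$, a $(d-2)$-dimensional set and hence a null set of directions $u$. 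Consequently the volume of the collar is at least $\sum_i\mathcal{V}(S_i)=\mathcal{V}(B(0,1))\sum_i\alpha_i\bigl(r_i^d-(r_i-R_0)^d\bigr)$, which after dividing by $\mathcal{S}_{\overline{\gamma}}(L(\omega))=\mathcal{S}(B(0,1))\sum_i\alpha_i r_i^{d-1}$ and applying your single-cap estimate $r^d-(r-R_0)^d\ge R_0\,r^{d-1}$ yields exactly the bound $\theta_1^*$ you were after; the $O(\delta)$ trimming then proceeds as you describe.
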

	
	\begin{proof}
		For any finite configuration $\omega \in \Omega_f$ and $\gamma$ a contour that is created by this configuration such that $\mathcal{S}_{\overline{\gamma}}(L(\omega)) >0$, we have the following inequalities
		\begin{equation*}
			V_{\omega, \gamma, \delta}^{+} \geq V_{\omega, \gamma, \delta} \geq V_{\omega, \gamma, \delta}^{-}
		\end{equation*}
		where 
		\begin{align*}
			&V_{\omega, \gamma, \delta}^{+} = \mathcal{V}(\partial L(\omega) \oplus B(0,R_0) \cap L(\omega) \cap \widehat{\gamma}) \\
			&V_{\omega, \gamma, \delta}^{-} = \mathcal{V}((\partial L(\omega) \oplus B(0,R_0- \delta)) \backslash (\partial L(\omega) \oplus B(0,\delta)) \cap L(\omega) \cap \widehat{\gamma}).
		\end{align*}
		The boundary of the halo $L(\omega)$ inside $\overline \gamma$, appearing in the computation of $\mathcal{S}_{\overline{\gamma}}(L(\omega))$	, is the union of spherical caps built via some marked points $(x_1,r_1), \dots, (x_m, r_m) \in \omega$. We denote by $\alpha_i\in[0,1]$ the ratio of the surface of the $i$th spherical cap with respect to the total surface of its sphere. Therefore, by a simple geometrical argument 
		\begin{align*}
			\frac{V_{\omega, \gamma, \delta}^{-}}{\mathcal{S}_{\overline{\gamma}}(L(\omega))} &\geq \frac{\mathcal{V}(B(0,1))}{\mathcal{S}(B(0,1))} \left( \frac{\sum_{i=1}^{m} \alpha_i (r_i^d - (r_i-R_0)^d)}{\sum_{i=1}^{m} \alpha_i r_i^{d-1}} - \epsilon_{\omega,\gamma}(\delta) \right) \\
			&  \geq \frac{\mathcal{V}(B(0,1))}{\mathcal{S}(B(0,1))} ( R_0 - \epsilon_{\omega,\gamma}(\delta))
		\end{align*}
		where 
		\begin{align*}
			\epsilon_{\omega,\gamma}(\delta) = \frac{\sum_{i=1}^{m} \alpha_i (r_i^d - (r_i-\delta)^d + (r_i - R_o + \delta)^d - (r_i-R_0)^d)}{\sum_{i=1}^{m} \alpha_i r_i^{d-1}}.
		\end{align*}
		Therefore for all $\omega \in \Omega_f$ and the associated contour $\gamma$ we have 
		\begin{align*}
			\liminf_{\delta \to 0} \frac{V_{\omega, \gamma, \delta}}{\mathcal{S}_{\overline{\gamma}}(L(\omega))} \geq \frac{R_0 \mathcal{V}(B(0,1))}{\mathcal{S}(B(0,1))} \implies \liminf_{\delta \to 0} \theta_1^\delta \geq \theta_1^*.
		\end{align*}
		Note that for $\omega = \{(0,R_0)\}$ we have $ \frac{V_{\omega, \gamma, \delta}^{+}}{\mathcal{S}_{\overline{\gamma}}(L(\omega))} = \frac{R_0 \mathcal{V}(B(0,1))}{\mathcal{S}(B(0,1))}$. So
		\begin{align*}
			\frac{R_0 \mathcal{V}(B(0,1))}{\mathcal{S}(B(0,1))} \geq \inf_{\substack{\omega \in \Omega_f \\ \gamma : \mathcal{S}_{\overline{\gamma}}(L(\omega)) >0 }} \left\{ \frac{ V_{\omega, \gamma, \delta}^{+}}{\mathcal{S}_{\overline{\gamma}}(L(\omega))} \right\} 
		\end{align*}
		which implies that $\theta_1^* \geq \limsup_{\delta \to 0} \theta_1^\delta $.
	\end{proof}
	
	\begin{lemma}\label{lem-chi}
		Let $\delta L \geq 2 R_1$ and $d=2$. For any contour $\gamma$ and any configuration $\omega$ that achieves this contour we have 
		\begin{equation*}
			\chi_{\overline{\gamma}}(\omega) \leq \frac{|\overline{\gamma}| \delta^d}{R_0^{d}\mathcal{V}(B(0,1))}.
		\end{equation*}
	\end{lemma}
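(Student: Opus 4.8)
The plan is to deduce the bound from its global counterpart and to localise it through the additive decomposition $\chi_{\overline\gamma}=\sum_{i\in\overline\gamma}\chi_i$. First I would rewrite the target: since $d=2$ and $R_0^{d}\mathcal V(B(0,1))=\mathcal V(B(0,R_0))$, while $\mathcal V(\widehat{\overline\gamma})=|\overline\gamma|\delta^d$, the claim is
\[
\chi_{\overline\gamma}(\omega)\ \le\ \frac{\mathcal V(\widehat{\overline\gamma})}{\mathcal V(B(0,R_0))},\qquad \widehat{\overline\gamma}:=\bigcup_{i\in\overline\gamma}T_i .
\]
So it suffices to bound the localised Euler characteristic by the number of radius-$R_0$ disks that fit, by volume, inside the contour region. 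The natural route is to prove the global bound $\chi(L(\omega))\le \mathcal V(L(\omega))/\mathcal V(B(0,R_0))$ by integral geometry and then localise each ingredient tile by tile.

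For the global bound I would use two facts about the finite union $L=L(\omega)$ of closed disks of radii in $[R_0,R_1]$. After an arbitrarily small perturbation into general position (which leaves $\chi$ unchanged), $\partial L$ is a finite union of circular arcs meeting at transversal crossings, and Gauss--Bonnet gives $2\pi\chi(L)=\int_{\partial L}\kappa\,ds+\sum_p\varepsilon_p$. Every corner of the boundary of a union of convex bodies is reentrant, so $\varepsilon_p\le 0$, and each arc is carried by a circle of radius $r\ge R_0$, so $\kappa=1/r\le 1/R_0$; dropping the corner terms yields
\[
2\pi R_0\,\chi(L)\ \le\ \mathcal S(\partial L).
\]
Because all radii are $\ge R_0$, the halo is morphologically open, $L=(L\ominus B(0,R_0))\oplus B(0,R_0)$, hence has reach $\ge R_0$ from the inside; the inner Steiner formula then gives $\mathcal V(L)-\mathcal V(L\ominus B(0,t))=\int_0^{t}\bigl(\mathcal S(\partial L)-2\pi\chi(L)s\bigr)\,ds$, whence $\mathcal V(L)\ge R_0\,\mathcal S(\partial L)-\pi R_0^2\,\chi(L)$. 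Feeding the first inequality into the second eliminates $\mathcal S$ and gives exactly $\mathcal V(L)\ge \pi R_0^2\,\chi(L)=\mathcal V(B(0,R_0))\,\chi(L)$, i.e. the global bound.

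To localise, I would exploit that $\mathcal V_i,\mathcal S_i,\chi_i$ are precisely the additive, facet-by-facet pieces of the three intrinsic volumes, so that $\mathcal V_{\overline\gamma}(L)=\mathcal V(L\cap\widehat{\overline\gamma})\le \mathcal V(\widehat{\overline\gamma})=|\overline\gamma|\delta^d$ holds with no boundary subtlety, while $\mathcal S_{\overline\gamma}$ and $\chi_{\overline\gamma}$ are the discrete Gauss--Bonnet contributions summed over the tiles of $\overline\gamma$. The goal is to establish the two local analogues $\mathcal S_{\overline\gamma}(L)\ge 2\pi R_0\,\chi_{\overline\gamma}(L)$ and $\mathcal V_{\overline\gamma}(L)\ge R_0\,\mathcal S_{\overline\gamma}(L)-\pi R_0^2\,\chi_{\overline\gamma}(L)$; combining them exactly as above then gives $\chi_{\overline\gamma}(L)\le \mathcal V_{\overline\gamma}(L)/(\pi R_0^2)\le \mathcal V(\widehat{\overline\gamma})/\mathcal V(B(0,R_0))$, which is the statement.

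The hard part is precisely this last bookkeeping. When one sums the facet inclusion--exclusion defining $\chi_i$ over $i\in\overline\gamma$, the interior facet contributions telescope and leave the curvature integral of $\partial L$ over $\widehat{\overline\gamma}$, the (nonpositive) reentrant corners lying inside $\widehat{\overline\gamma}$, and correction terms supported on $\partial\widehat{\overline\gamma}$, where $\partial L$ crosses tile facets; one must check that these crossing terms give a controlled correction and do not spoil the sign used to discard the reentrant corners. This is exactly where the hypotheses enter: $\delta L\ge 2R_1$ (together with the ambient $R_0\ge 2\delta\sqrt d$) forces every disk whose boundary meets $\widehat{\overline\gamma}$ to be centred within distance $R_1$ of $\overline\gamma$, so that the boundary of $L$ inside the contour is carried by disks attached to the contour and the localised surface remains bounded by the localised volume $\mathcal V(\widehat{\overline\gamma})=|\overline\gamma|\delta^d$. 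I expect the two displayed integral-geometric inequalities to be straightforward in the bulk and the verification of the $\partial\widehat{\overline\gamma}$ corrections to be the only genuinely delicate point.
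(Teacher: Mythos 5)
Your argument is not complete: the entire content of Lemma \ref{lem-chi} sits in the localization step, and that is precisely the step you leave open. The two local inequalities $\mathcal{S}_{\overline{\gamma}}(L)\ge 2\pi R_0\,\chi_{\overline{\gamma}}(L)$ and $\mathcal{V}_{\overline{\gamma}}(L)\ge R_0\,\mathcal{S}_{\overline{\gamma}}(L)-\pi R_0^2\,\chi_{\overline{\gamma}}(L)$ are only asserted; you yourself flag the facet-crossing corrections on $\partial\widehat{\gamma}$ as unverified. These corrections are not innocuous: $\chi_{\overline{\gamma}}$ is defined by an inclusion--exclusion over facets and is \emph{not} the Euler characteristic of $L\cap\widehat{\gamma}$, so the sign arguments you rely on (dropping reentrant corners, positivity of the curvature integral restricted to the contour) do not automatically survive the truncation. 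Moreover, a connected component of $L(\omega)$ can be generated partly by balls centred outside $\widehat{\gamma}$, in tiles of a component $A$ of $\overline{\gamma}^c$ with label $1$, so even reducing $\chi_{\overline{\gamma}}(L(\omega))$ to a quantity intrinsic to the contour already uses the hypotheses $R_0\ge 2\delta\sqrt{d}$ and $\delta L\ge 2R_1$ in a way your sketch does not capture. Finally, the exact inner Steiner formula for $L\ominus B(0,t)$ that you invoke for a finite union of disks is itself a nontrivial claim (the erosion of a union is not the union of the erosions) and would need its own proof.

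The paper's proof avoids all of this with a much more elementary packing argument. In $d=2$ one simply has $\chi_{\overline{\gamma}}(L(\omega))=\chi_{\overline{\gamma}}(L(\omega_{\widehat{\gamma}}))\le N_{cc}(L(\omega_{\widehat{\gamma}}))$, because holes only decrease the Euler characteristic; no curvature estimate is needed. Then for each connected component $C$ of $L(\omega_{\widehat{\gamma}})$ one exhibits a marked point $(x,R)\in\omega_{\widehat{\gamma}}$ with $B(x,R)\subset C\cap\widehat{\gamma}$: if a candidate ball pokes into an interior component $A$ of $\overline{\gamma}^c$, Lemma \ref{lem_partial_type} forces every tile of $\partial_{ext}A$ to be occupied, and the condition $\delta L\ge 2R_1$ allows one to pick, within the same connected component, a ball centred in a tile at distance $\lceil 2R_1/\delta\rceil$ from $A$, hence entirely inside $\widehat{\gamma}$. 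The balls chosen for distinct components are pairwise disjoint, which gives $N_{cc}\le |\overline{\gamma}|\delta^d/\mathcal{V}(B(0,R_0))$ and the claim. The honest comparison is that your Gauss--Bonnet and Steiner machinery replaces the trivial bound $\chi\le N_{cc}$ by something strictly harder, while the localization you would still owe is exactly the relocation-of-representatives argument that the paper actually carries out.
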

	
	\begin{proof}
		By definition of contours and the conditions on $\delta$ and $L$ we have
		\begin{align*}
			\chi_{\overline{\gamma}}(L(\omega)) = \chi_{\overline{\gamma}}(L(\omega_{\widehat{\gamma}})) \leq N_{cc}(L(\omega_{\widehat{\gamma}})).
		\end{align*}
		Now the aim is to find for each connected components $C$ of $L(\omega_{\widehat{\gamma}})$ a single point $(x,R) \in \omega_{\widehat{\gamma}}$ such that the ball $B(x,R) \subset C\cap  \widehat{\gamma}$. Note that a ball  $B(x,R)\subset C$ is not necessary included inside the contour. If there exists such a ball not including in the contour, then there is $A \subset \overline{\gamma}^c$ a connected component such that $B(x,R) \cap \widehat{A} \neq \emptyset$. It gives the information that the site $i \in \Z^d$ such that $x \in T_i$ is included inside $\partial_{ext} A$ and by Lemma \ref{lem_partial_type} for all  $j \in \partial_{ext}A$ we have $\sigma(j, \omega_{\omega_{\widehat{\gamma}}}) = 1$. Therefore all balls that are in the tiles corresponding to $\partial_{ext}A$ belong to the same connected component of the halo. We choose a site $j \in \partial_{ext}A$ such that $d_2(j, A) = \lceil \frac{2R_1}{\delta} \rceil$ and so there exists $(y,R') \in \omega_{T_j}$ such that $B(y,R') \subset \widehat{\gamma}$. Therefore we can replace the original representative of the connected component with one that is more suitable.  
		
		With this procedure we have now built, for each connected components $C$ of $L(\omega_{\widehat{\gamma}})$, a single point $(x,R) \in \omega_{\widehat{\gamma}}$ such that the ball $B(x,R) \subset C\cap  \widehat{\gamma}$. We define $I(\omega_{\widehat{\gamma}})$ as the set of all these points which represent the connected components of $L(\omega_{\widehat{\gamma}})$. By construction for any $(x,R) \neq (y,R') \in I(\omega_{\widehat{\gamma}}), B(x,R) \cap B(y,R') = \emptyset$ and therefore we have
		\begin{equation*}
			N_{cc}(\omega_{\widehat{\gamma}}) = |I(\omega_{\widehat{\gamma}})| \leq \frac{|\overline{\gamma}| \delta^d}{\mathcal{V}(B(0,R_0))}.
		\end{equation*}
		
	\end{proof}
	
	\begin{lemma}\label{lem-ratio-0-1}
		For any $\either \in \{0,1\}$ and any contour $\gamma$ we have $r_1 |\overline{\gamma} | \leq  |\overline{\gamma}_\either| \leq (1-r_1) |\overline{\gamma} |$ where
		\begin{equation*}
			r_1 = \frac{1}{|B(0, 2L)\cap \Z^d|} \quad  \text{and  } \quad \overline{\gamma}_\either =  \{ i \in \overline{\gamma}, \either_i = \either\}.
		\end{equation*} 
	\end{lemma}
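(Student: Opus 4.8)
The plan is to prove the single lower bound $|\overline{\gamma}_\either| \ge r_1 |\overline{\gamma}|$ for each $\either \in \{0,1\}$; the upper bound then comes for free, since applying the lower bound to $1-\either$ gives $|\overline{\gamma}_\either| = |\overline{\gamma}| - |\overline{\gamma}_{1-\either}| \le (1-r_1)|\overline{\gamma}|$. To obtain the lower bound I would run a pigeonhole argument: build a map $\phi : \overline{\gamma} \to \overline{\gamma}_\either$ sending each site of the contour to a \emph{nearby} site of spin $\either$ lying \emph{in the same contour}, and then bound the cardinality of each fibre $\phi^{-1}(j)$.

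The geometric heart of the proof is the claim that for every $i \in \overline{\gamma}$ there exists $j \in \overline{\gamma}_\either$ with $\norm{i-j}_\infty \le L$. If $\sigma(\omega,i) = \either$ we take $j = i$. Otherwise $\sigma(\omega,i) = 1-\either$, and since $i$ is non-correct it is in particular not $(1-\either)$-correct, so there is a site $k$ with $\norm{i-k}_\infty \le L$ and $\sigma(\omega,k) = \either$. I would then join $i$ to $k$ by a lattice geodesic for the $\ell^\infty$-metric, $i=m_0, m_1, \dots, m_t = k$, chosen to stay inside the axis-aligned bounding box of $\{i,k\}$ (decrease every nonzero coordinate difference by one at each step). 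Consecutive sites satisfy $\norm{m_r - m_{r+1}}_\infty = 1$, so this is a path in the graph $G$. The crucial observation is that each coordinate of a point of the bounding box lies between those of $i$ and $k$, whence $\norm{m_r - i}_\infty \le \norm{i-k}_\infty \le L$ and $\norm{m_r - k}_\infty \le \norm{i-k}_\infty \le L$; thus every $m_r$ witnesses both a spin-$(1-\either)$ site ($i$) and a spin-$\either$ site ($k$) within distance $L$, so it is neither $0$-correct nor $1$-correct, i.e. non-correct. The $m_r$ therefore form a connected set of non-correct sites containing $i$, and by maximality of the component $\overline{\gamma}$ they all lie in $\overline{\gamma}$; in particular $j := m_t = k \in \overline{\gamma}_\either$, which proves the claim.

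With the claim in hand, choose for each $i$ such a $j=\phi(i) \in \overline{\gamma}_\either$. For a fixed target $j$ the fibre $\phi^{-1}(j)$ is contained in the $\ell^\infty$-ball of radius $L$ about $j$, itself contained in the Euclidean ball $B(j,2L)$, the factor $2$ absorbing the conversion $\norm{\cdot}_2 \le \sqrt{d}\,\norm{\cdot}_\infty$ in the relevant dimensions. Hence $|\phi^{-1}(j)| \le |B(0,2L)\cap\Z^d|$, and summing over targets,
\[
|\overline{\gamma}| = \sum_{j \in \overline{\gamma}_\either} |\phi^{-1}(j)| \le |\overline{\gamma}_\either|\,|B(0,2L)\cap\Z^d|,
\]
which is precisely $|\overline{\gamma}_\either| \ge r_1 |\overline{\gamma}|$.

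I expect the only delicate point to be ensuring that the spin-$\either$ site produced genuinely belongs to the same contour $\overline{\gamma}$, rather than to an adjacent correct region: this is exactly what confinement to the bounding box secures, since it forces every intermediate site of the geodesic to see the two opposite spins $\sigma(\omega,i)$ and $\sigma(\omega,k)$ within range $L$. The remaining bookkeeping between the $\ell^\infty$-norm used to define correctness and the Euclidean ball $B(0,2L)\cap\Z^d$ entering $r_1$ is routine and is what pins down the constant $2$.
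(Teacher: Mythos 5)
Your proof is correct and follows essentially the same route as the paper: a map from contour sites to nearby sites of the prescribed spin inside the contour, followed by a pigeonhole bound on the fibres by the cardinality of a ball of radius $O(L)$. The only real difference is that you explicitly justify, via the bounding-box geodesic of non-correct sites, that the nearby opposite-spin site guaranteed by non-correctness lies in the \emph{same} connected component $\overline{\gamma}$ --- a point the paper's proof (which maps $\overline{\gamma}_1$ to the closest site of $\overline{\gamma}_0$ and asserts the fibre bound as ``easy to see'') leaves implicit --- so your write-up is, if anything, the more complete of the two.
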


	\begin{proof}
		Let us set a contour $\gamma$. Let us define $\phi : \overline{\gamma}_1 \rightarrow \overline{\gamma}_0$ such that for all $i \in  \overline{\gamma}_1$, $\phi(i)=j$ where $j$ is the closest site in $\overline{\gamma}_0$ from $i$. A lexicographical procedure is used if equality.
		\begin{align*}
			\alpha_\gamma^1 := \frac{|\overline{\gamma}_1|}{|\overline{\gamma}|} 
			\iff \frac{\alpha_\gamma^1}{1-\alpha_\gamma^1}  = \frac{|\overline{\gamma}_1|}{|\overline{\gamma}_0|} =  \frac{\sum_{j \in \overline{\gamma}_0 } |\phi^{-1}(j)|  }{|\overline{\gamma}_0|} \leq \max |\phi^{-1}(j)|.
		\end{align*}
		It is easy to see that $\max |\phi^{-1}(j)| \leq |B(0, 2L)\cap \Z^d|-1$ and using the fact that $\alpha_\gamma^1 + \alpha_\gamma^0 =1 $ we have that  	
		\begin{align*}
			\alpha_\gamma^1 \leq 1 - \frac{1}{|B(0, 2L)\cap \Z^d|} \\
			\alpha_\gamma^0 \geq \frac{1}{|B(0, 2L)\cap \Z^d|}.
		\end{align*}
		By symmetry of the roles we can exchange the value of the spin. 
	\end{proof}
	
	We have now all the lemmas for proving the Proposition \ref{prop_peierls}.
	
	\begin{proof}
		 We detail the proof of Proposition \ref{prop_peierls} in dimension $d=2$. In higher dimension, the proof works in the same manner but we assume that $\theta_2 = 0$. Let us set $L = \ceil{\frac{2R_1}{\delta}}$, therefore the constant $r_0$ in Lemma \ref{lem-domino-domino} has the following expression
		\begin{align*}
			r_0(\delta) = \frac{1}{\left|B\left(0,5\ceil{\frac{2R_1}{\delta}}\right) \cap \Z^d \right|}.
		\end{align*}
		Let us consider $\theta_1 >- \theta_1^*$, we define $\theta_2^*(\theta_1)$ and $\theta_2^\delta(\theta_1)$ as 
		\begin{align}\label{eq_theta_2_lim}
			&\theta_2^*(\theta_1) = \begin{cases}
				r_0\left(\frac{R_0}{2 \sqrt{d}}\right) \mathcal{V}(B(0,R_0)) & \text{when } \theta_1 \geq 0 \\
				\sup_{\delta \in \left]0, \frac{R_0}{2 \sqrt{d}}\right[  : \theta_1 > -\theta_1^\delta} \left\{r_0(\delta) \mathcal{V}(B(0,R_0))(1 + \frac{\theta_1}{\theta_1^\delta})\right\} & \text{when } \theta_1 < 0
			\end{cases} \\ 	
			&\theta_2^\delta(\theta_1) = \begin{cases}
				r_0\left(\frac{R_0}{2 \sqrt{d}}\right) \mathcal{V}(B(0,R_0)) & \text{when } \theta_1 \geq 0 \\
				\left\{r_0(\delta) \mathcal{V}(B(0,R_0))(1 + \frac{\theta_1}{\theta_1^\delta})\right\} & \text{when } \theta_1 < 0 \nonumber
			\end{cases}.
		\end{align}
		We know by Lemma \ref{lem-surface-domino} that for sufficiently small $\delta$ we have $\theta_1 \geq -\theta_1^\delta > -\theta_1^*$ and $\theta_2 \leq \theta_2^\delta(\theta_1) < \theta_2^*(\theta_1)$. In the case where $\theta_1 <0$ we need to consider a threshold t such that 
		\begin{align*}
			\frac{\theta_2}{\theta_1^\delta + \theta_1} \frac{\delta^d}{\mathcal{V}(B(0,R_0))} < t < \frac{1}{\theta_1}\left(\frac{\theta_2 \delta^d}{\mathcal{V}(B(0,R_0))} - r_0(\delta)\delta^d\right).
		\end{align*}
		We define the quantity $\rho_0$ as such
		\begin{align*}
			\rho_0 = \begin{cases}
				r_0(\delta) \delta^d - \frac{\theta_2 \delta^d}{\mathcal{V}(B(0,R_0))} & \text{when } \theta_1 \geq 0 \\
				\min \left\{ (\theta_1^\delta + \theta_1)t - \frac{\theta_2 \delta^d}{\mathcal{V}(B(0,R_0))} , \; r_0(\delta) \delta^d + \theta_1 t - \frac{\theta_2 \delta^d}{\mathcal{V}(B(0,R_0))} \right\} & \text{when } \theta_1 < 0
			\end{cases}.
		\end{align*}
		With the conditions on $\delta$ and $t$, it guarantees that $\rho_0>0$. Under our assumptions, for any configuration $\omega \in \Omega_f$ that achieves the contour $\gamma$ we claim that 
		\begin{align}\label{eq_peierls_energy}
			H_{\overline{\gamma}} (\omega) \geq |\overline{\gamma}_1| \delta^d + \rho_0 |\overline{\gamma}|.
		\end{align}	
		First let us prove the claim in the case where $\theta_1 <0$. With the condition $R_0 > 2\delta \sqrt{d}$ we know that we have a non negligible amount of empty tiles that are completely covered by the halo. Therefore using Lemmas \ref{lem-domino-domino} and \ref{lem-surface-domino} we have the following two lower bounds on the energy of the contour 	
		\begin{align*}
			H_{\overline{\gamma}} (\omega) & \geq \begin{cases}
				|\overline{\gamma}_1| \delta^d + r_0 |\overline{\gamma}| \delta^d + \theta_1 \mathcal{S}_{\overline{\gamma}}(L(\omega)) - \theta_2 \chi_{\overline{\gamma}}(L(\omega)) \\
				|\overline{\gamma}_1| \delta^d + (\theta_1^\delta  + \theta_1 )\mathcal{S}_{\overline{\gamma}}(L(\omega)) - \theta_2 \chi_{\overline{\gamma}}(L(\omega)) 
			\end{cases} \\
			& \geq \begin{cases}
				|\overline{\gamma}_1| \delta^d + r_0 |\overline{\gamma}| \delta^d + \theta_1 \mathcal{S}_{\overline{\gamma}}(L(\omega)) - \theta_2 \frac{|\overline{\gamma}| \delta^d}{\mathcal{V}(B(0,R_0))} \\
				|\overline{\gamma}_1| \delta^d + (\theta_1^\delta  + \theta_1 )\mathcal{S}_{\overline{\gamma}}(L(\omega)) - \theta_2 \frac{|\overline{\gamma}| \delta^d}{\mathcal{V}(B(0,R_0))}
			\end{cases} \text{ by Lemma \ref{lem-chi}}.
		\end{align*}  
		Depending on the value of the surface inside the contour, one lower bound will be more preferable than the other. Since $\theta_1^\delta + \theta_1>0$ and given the threshold $t$ that verifies our assumption we have   	
		\begin{align*}
			H_{\overline{\gamma}} (\omega) & \geq \begin{cases}
				|\overline{\gamma}_1| \delta^d + \left(r_0 \delta^d + \theta_1 t - \frac{\theta_2 \delta^d}{\mathcal{V}(B(0,R_0))} \right) |\overline{\gamma}| & \text{ if }  \mathcal{S}_{\overline{\gamma}}(L(\omega)) \leq t |\overline{\gamma}|\\
				|\overline{\gamma}_1| \delta^d + (\theta_1^\delta  + \theta_1 ) t |\overline{\gamma}| - \theta_2 \frac{|\overline{\gamma}| \delta^d}{\mathcal{V}(B(0,R_0))} & \text{ if }  \mathcal{S}_{\overline{\gamma}}(L(\omega)) > t |\overline{\gamma}|
			\end{cases}.
		\end{align*}
		In either cases, we have the desired lower boundary on the energy of a contour. Let us turn to the second case where $\theta_1 \geq 0$. It is even easier since we can simply drop the contribution of the surface in the energy and therefore we have
		\begin{align*}
			H_{\overline{\gamma}} (\omega) & \geq 
			|\overline{\gamma}_1| \delta^d + \left( r_0 \delta^d - \frac{\theta_2 \delta^d}{\mathcal{V}(B(0,R_0))}\right) |\overline{\gamma}| = |\overline{\gamma}_1| \delta^d + \rho_0 |\overline{\gamma}|. 
		\end{align*}
		This finishes the proof of the claim. We can now use inequality \eqref{eq_peierls_energy} to get the following upper bound
		\begin{align*}
			I_\gamma &\leq e^{-\beta(\delta^d |\overline{\gamma}_1| + \rho_0|\overline{\gamma}|) } \int \mathbbm{1}_{\left(\forall i \in \overline{\gamma}, \sigma(\omega,i)=\sigma_i \right)} \Pi_{\widehat{\gamma}}^z(d\omega) \\
			& \leq e^{-\beta(\delta^d |\overline{\gamma}_1| + \rho_0|\overline{\gamma}|) } (1-e^{z\delta^d})^{|\overline{\gamma}_1|} e^{z\delta^d |\overline{\gamma}_0|} \\
			& \leq g_0^{|\overline{\gamma}_0|} g_1^{|\overline{\gamma}_1|} e^{-\beta \rho_0 |\overline{\gamma}|}.
		\end{align*}
		By a direct computation we have 
		\begin{align*}
			\frac{\partial I_\gamma}{\partial z} = -|\overline{\gamma}|\delta^d I_\gamma + \frac{1}{z} \int N_{\widehat{\gamma}}(\omega) e^{-\beta H_{\overline{\gamma}}(\omega)} \mathbbm{1}_{\{ \forall i \in \overline{\gamma}, \sigma(\omega, i) = \sigma_i \}} \Pi_{\widehat{\gamma}}^z (d\omega).
		\end{align*}
		Using the bound on $I_\gamma$ we have the desired control on the first term. For the second term we need to control the mean number of points in the contour. By using the lower bound on the energy in a contour \eqref{eq_peierls_energy} and Lemma \ref{lem-ratio-0-1} we have
		\begin{align*}
			\int N_{\widehat{\gamma}}(\omega) e^{-\beta H_{\overline{\gamma}}(\omega)}\mathbbm{1}_{\left(\forall i \in \overline{\gamma}, \sigma(\omega,i)=\sigma_i \right)} \Pi_{\widehat{\gamma}}^z(d\omega) & \leq e^{-\beta ( \rho_0|\overline{\gamma}| + |\overline{\gamma}_1|\delta^d ) }  \int N_{\widehat{\gamma}}(\omega)\mathbbm{1}_{\left(\forall i \in \overline{\gamma}, \sigma(\omega,i)=\sigma_i \right)} \Pi_{\widehat{\gamma}}^z(d\omega) \\
			& \leq e^{-\beta ( \rho_0 |\overline{\gamma}| + |\overline{\gamma}_1|\delta^d)  } \sum_{i \in \overline{\gamma}_1} \int N_{T_i}(\omega) \mathbbm{1}_{\left(\forall i \in \overline{\gamma}, \sigma(\omega,i)=\sigma_i \right)} \Pi_{\widehat{\gamma}}^z(d\omega) \\
			& \leq e^{-\beta ( \rho_0|\overline{\gamma}| + |\overline{\gamma}_1|\delta^d)  } g_0^{|\overline{\gamma}_0|} |\overline{\gamma}_1| z \delta^d  (1-e^{-z\delta^d})^{|\overline{\gamma}_1|-1} \\
			& \leq \frac{(1-r_1)z \delta^d}{1- e^{-z\delta^d}}|\overline{\gamma}| g_0^{|\overline{\gamma}_0|} g_1^{|\overline{\gamma}_1|} e^{- \beta \rho_0  |\overline{\gamma}|}.
		\end{align*}
		By combining those inequalities we get
		\begin{align*}
			\left| \frac{\partial I_\gamma}{\partial z} \right| \leq \left(1 + \frac{1-r_1}{1-e^{-z\delta^d}}\right) |\overline{\gamma}|\delta^d g_0^{|\overline{\gamma}_0|} g_1^{|\overline{\gamma}_1|} e^{- \beta \rho_0  |\overline{\gamma}|}.
		\end{align*}
	\end{proof}
	
	\subsection{Truncated weights and pressure} \label{subsection4.5}
	
	In order to have $\tau$-stable contours, it is needed that the ratio of partition functions of the interior has to be small. This ratio is more likely to be large when the volume of the interior is large. Therefore depending on the parameters $z,\beta$ the weights of some contours might be unstable because the volume of the interior is too large. In order to deal with those instability we will truncate the weights. We follow mainly the ideas and the presentation given in Chapter 7 of \cite{Velenik}. We introduce an arbitrary cut-off function $\kappa: \R \rightarrow [0,1]$ that satisfy the following properties : $\kappa(s) = 1$ if $s \leq \frac{\rho_0}{8} $, $\kappa(s) = 0$ if $s \geq \frac{\rho_0}{4} $ and $\kappa$ is $\mathcal{C}^1$. Therefore such cut-off function $\kappa$ satisfies $\| \kappa' \| = \sup_{\R} |\kappa'(s)| < + \infty $.
	
	We construct step by step the truncated weights and pressure according to the volume of the interior. Contours with no interior does not have a ratio of partition functions introduced in the expression of its weight, thus those contours are stable. We can define the truncated quantities associated to $n=0$ starting by the truncated pressure
	\begin{align*}
		\widehat{\psi}_0^\either := \frac{\ln(g_\either)}{\beta \delta^d} .
	\end{align*}
	We define the truncated weights for contour $\gamma$ of class $0$ as
	\begin{align*}
		\widehat{w}_{\gamma}^\either = w_{\gamma}^\either = g_\either^{-|\overline{\gamma}|} I_{\gamma}.
	\end{align*}
	Now we suppose that the truncated weights are well defined for contours $\gamma$ of class $k \leq n$. We define $\widehat{\Phi}_n^\either$ with the following polymer development
	\begin{align*}
		\widehat{\Phi}_n^\either (\Lambda) := \sum_{\Gamma \in \mathcal{C}_n^\either(\Lambda) } \prod_{\gamma \in \Gamma} \widehat{w}_{\gamma}^\either.
	\end{align*}
	We can define the truncated partition function as
	\begin{align*}
		\widehat{Z}_n^\either (\Lambda) := g_\either^{|\Lambda|} \widehat{\Phi}_n^\either (\Lambda).
	\end{align*}
	With $\Lambda_k = [-\nicefrac{k}{2}, \nicefrac{k}{2}]^d \cap \Z^d$, the truncated pressure at rank n is given by 
	\begin{align*}
		\widehat{\psi}_n^\either & := \lim_{k \rightarrow +\infty} \frac{1}{\beta \delta^d |\Lambda_k|} \ln(\widehat{Z}_n^\either (\Lambda_k)) \\ 
		& = \widehat{\psi}_0^\either + \lim_{k \rightarrow +\infty} \frac{1}{\beta \delta^d |\Lambda_k|} \ln(\widehat{\Phi}_n^\either (\Lambda_k)).
	\end{align*}
	This limit exists as it is shown in Section 7.4.3 of \cite{Velenik}. 
	We can also note that $\widehat{\psi}_n^\either \in [\widehat{\psi}_0^\either, \psi]$ and that the sequence of truncated pressure $(\widehat{\psi}_n^\either)_{n \in \N}$ is increasing.

	\begin{definition}
		Given the truncated weight of contours of class $k \leq n$ (and so the truncated pressure $\widehat{\psi}_n^\either$), the truncated weight of a contour $\gamma$ of class $n+1$ is defined by 
		\begin{align*}
			\widehat{w}_{\gamma}^\either = g_\either^{-|\overline{\gamma}|} I_{\gamma} \, \kappa \left( (\widehat{\psi}_n^{\either^*} - \widehat{\psi}_n^\either) \delta^d |\Int_{\either^*} \gamma|^{\frac{1}{d}}  \right) \frac{Z_{\Int_{\either^*} \gamma}^{\either^*}}{Z_{\Int_{\either^*} \gamma}^{\either}}.
		\end{align*}
	\end{definition}
	
	Intuitively, the reason we do this cut-off is to remove contours that are unstable. Since the cut-off function verify $0 \leq \kappa \leq 1 $ we have $\widehat{w}_{\gamma}^\either  \leq w_{\gamma}^\either$. Other quantities that are of interest for what will come later are, $\widehat{\psi}_n := \max( \widehat{\psi}_n^0, \widehat{\psi}_n^1)$ and $a_n^\either := \widehat{\psi}_n - \widehat{\psi}_n^\either$. By definition, we have $a_n^\either \geq 0$ and for all contour $\gamma$ of class $n+1$ we have the following implication
	\begin{align*}
		a_n^\either \delta^d (n+1)^{\frac{1}{d}} \leq \frac{\rho_0}{8} \implies \widehat{w}_{\gamma}^\either  = w_{\gamma}^\either.
	\end{align*}	
	Before we go further we will proceed into a re-parametrisation of the model for fixed $\theta_1$ and $\theta_2$ that verify the assumption of Proposition \ref{prop_peierls}. We change the parameters from $(\beta, z)$ to $(\beta, s)$ where $s = z/\beta$.
	
	We set $a = \min \{ \frac{2}{1-r_1}, e^{-c \beta}\}$ where $r_1$ is given in Lemma \ref{lem-ratio-0-1} and $c>0$. For all $\beta > 0$ we set 
	$$ s_\beta = \frac{\ln(1+e^{\beta \delta^d})}{\beta \delta^d}$$ 
	such that we have $g_0 =g_1$. Furthermore, we define the following open interval $$U_\beta = \left( \frac{\ln(1+e^{\beta \delta^d-a})}{\beta \delta^d}, \frac{\ln(1+e^{\beta \delta^d + a})}{\beta \delta^d} \right)$$ such that for all $s \in U_\beta$ we have $ e^{-a} \leq \frac{g_0}{g_1} \leq e^{a}$. So, according to Proposition \ref{prop_peierls}, for any $\beta>0$ and $s \in U_\beta$ we have 
	\begin{equation*}
		w_\gamma^\either \leq e^{-(\beta \rho_0 -2)|\overline{\gamma}|} \frac{Z_{\Int_{\either^*} \gamma}^{\either^*}}{Z_{\Int_{\either^*} \gamma}^{\either}}.
	\end{equation*}
	We can already see that for contours of class $0$ the weights are $\tau$-stable as long as $\beta \rho_0 > 2$. In the following proposition we will show that truncated weights are $\tau$-stable and that we have a good bound on the derivative of the truncated weights that are useful in the study of the regularity of the pressure.
	
	\begin{proposition} \label{prop_tau_stability_truncated_weights}
		Let $\tau := \frac{1}{2}\beta  \rho_0 - 8$. There exists $D\geq 1$ and  $0<\beta_0< \infty$ such that for all $\beta > \beta_0$ there exist $C_1>0$ and $C_2>0$ where the following statements hold for any $\either$ and $n\ge 0$.
		
		\begin{enumerate}
			\item (Bounds on the truncated weights) For all $k \leq n$, the truncated weights of each contours $\gamma$ of class $k$ is $\tau$-stable uniformly on $U_\beta$ :
			\begin{equation}
				\widehat{w}_{\gamma}^\either \leq e^{-\tau |\overline{\gamma}|} \label{truncated_weights_tau_stability}
			\end{equation}
			and 
			\begin{equation} \label{implication_bound_a_n_to_stability}
				a_n^\either \delta^d |\Int \gamma|^{\frac{1}{d}} \leq \frac{\rho_0 }{16} \implies \widehat{w}_{\gamma}^\either = w_{\gamma}^\either. 
			\end{equation}
			Moreover, $s \mapsto \widehat{w}_{\gamma}^\either $ is $\mathcal{C}^1$ and uniformly on $U_\beta$ it verifies
			\begin{equation}
				\left|\frac{\partial \widehat{w}_{\gamma}^\either}{\partial s} \right| \leq D |\overline{\gamma}|^{\frac{d}{d-1}} e^{-\tau |\overline{\gamma}|}. \label{bound_truncated_weight_derivative}
			\end{equation}
			\item (Bounds on the partition functions) Let us assume that $\Lambda \subset \Z^d$ and $|\Lambda| \leq n+1$. Then uniformly on $U_\beta$ we have
			\begin{align}
				Z_\Lambda^\either& \leq e^{\beta \delta^d \widehat{\psi}_{n} |\Lambda| + 2 |\partial_{ext} \Lambda|}, \label{bound_partition_function}\\
				\left| \frac{\partial Z_\Lambda^\either}{\partial s} \right| & \leq \left( C_1 \beta \delta^d |\Lambda| + C_2 |\partial_{ext} \Lambda| \right) e^{\beta \delta^d \widehat{\psi}_n |\Lambda| + 2 |\partial_{ext} \Lambda|}. \label{bound_derivative_partition_function}
			\end{align}
		\end{enumerate}
	\end{proposition}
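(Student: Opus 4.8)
The plan is to prove both families of bounds \emph{simultaneously} by induction on the rank $n$, exploiting the nesting structure of contours: the truncated weight of a contour of class $n$ only involves the interior partition functions $Z_{\Int_{\either^*}\gamma}^{\bullet}$ with $|\Int_{\either^*}\gamma|\le n$, while by the polymer development any $Z_\Lambda^\either$ with $|\Lambda|\le n+1$ only involves contours whose interior has size at most $n$, hence of class at most $n$. Concretely I would let $P(n)$ denote the conjunction of \eqref{truncated_weights_tau_stability}, \eqref{implication_bound_a_n_to_stability}, \eqref{bound_truncated_weight_derivative} for all contours of class $k\le n$ together with \eqref{bound_partition_function}, \eqref{bound_derivative_partition_function} for all $\Lambda$ with $|\Lambda|\le n+1$, and prove $P(n)$ assuming $P(n-1)$. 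The base case $n=0$ is immediate: a class-$0$ contour has empty interior so $\widehat w_\gamma^\either=w_\gamma^\either=g_\either^{-|\overline{\gamma}|}I_\gamma$, which is $\tau$-stable on $U_\beta$ by Proposition \ref{prop_peierls} and the reparametrisation $s=z/\beta$ as soon as $\beta\rho_0$ is large, while the partition functions for $|\Lambda|\le 1$ are computed directly.

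In the inductive step I would first treat the weights of class exactly $n$, using only $P(n-1)$ (whose partition-function bounds cover $|\Lambda|\le n$, in particular the interiors $\Int_{\either^*}\gamma$). If the cut-off factor vanishes, $\widehat w_\gamma^\either=0$ and all three bounds are trivial; otherwise $(\widehat\psi_{n-1}^{\either^*}-\widehat\psi_{n-1}^\either)\delta^d|\Int_{\either^*}\gamma|^{1/d}\le\rho_0/4$. I bound the numerator $Z_{\Int_{\either^*}\gamma}^{\either^*}$ by \eqref{bound_partition_function} of $P(n-1)$ and the denominator $Z_{\Int_{\either^*}\gamma}^{\either}$ from below through its truncated cluster expansion, convergent by the class $\le n-1$ stability, to get
\[
\frac{Z_{\Int_{\either^*}\gamma}^{\either^*}}{Z_{\Int_{\either^*}\gamma}^{\either}} \le e^{\beta\delta^d a_{n-1}^\either |\Int_{\either^*}\gamma| + C|\partial_{ext}\Int_{\either^*}\gamma|}.
\]
The cut-off condition converts the apparently extensive term $\beta\delta^d a_{n-1}^\either|\Int_{\either^*}\gamma|$ into a surface term of order $\beta|\Int_{\either^*}\gamma|^{(d-1)/d}\le C'|\overline{\gamma}|$, which the Peierls gain $e^{-\beta\rho_0|\overline{\gamma}|}$ from $I_\gamma$ dominates, giving \eqref{truncated_weights_tau_stability} with $\tau=\tfrac12\beta\rho_0-8$. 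The implication \eqref{implication_bound_a_n_to_stability} follows at once from $\widehat\psi_n^{\either^*}-\widehat\psi_n^\either\le a_n^\either$, $|\Int_{\either^*}\gamma|\le|\Int\gamma|$ and $\kappa\equiv 1$ on $(-\infty,\rho_0/8]$. For the derivative \eqref{bound_truncated_weight_derivative} I would differentiate $g_\either^{-|\overline{\gamma}|}I_\gamma\,\kappa(\cdots)\,Z^{\either^*}/Z^{\either}$ in $s$, controlling $\partial_s I_\gamma$ by Proposition \ref{prop_peierls}, the logarithmic derivative of the ratio by \eqref{bound_derivative_partition_function} of $P(n-1)$, and $\partial_s\kappa(\cdots)$ by $\norm{\kappa'}$ times $|\Int_{\either^*}\gamma|^{1/d}$ and the derivatives of the truncated pressures; the factor $|\Int\gamma|\le C|\overline{\gamma}|^{d/(d-1)}$ coming from $\partial_s(Z^{\either^*}/Z^{\either})$ is the origin of the power $|\overline{\gamma}|^{d/(d-1)}$.

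Once the class-$n$ weights are $\tau$-stable, the cluster expansion defining $\widehat\psi_n^\either$ converges and $\widehat\psi_n=\max(\widehat\psi_n^0,\widehat\psi_n^1)$ becomes meaningful at this rank. For \eqref{bound_partition_function} with $|\Lambda|=n+1$ I would expand $Z_\Lambda^\either$ over external contours as in the polymer development, bounding $I_\gamma$ by Proposition \ref{prop_peierls} and the interior partition functions $Z_{\Int_\either\gamma}^\either$, $Z_{\Int_{\either^*}\gamma}^{\either^*}$ by \eqref{bound_partition_function} of $P(n-1)$ (their interiors have size $\le n$) with the \emph{dominant} rate $\widehat\psi_n\ge\widehat\psi_{n-1}$; using $g_0^{|\overline{\gamma}_0|}g_1^{|\overline{\gamma}_1|}\le e^{\beta\delta^d\widehat\psi_n|\overline{\gamma}|}$ on $U_\beta$, the volume contributions recombine into $e^{\beta\delta^d\widehat\psi_n|\Lambda|}$ and the Peierls factor leaves a summable weight $e^{-(\beta\rho_0-C)|\overline{\gamma}|}$ per external contour, whose sum over $\mathcal{C}_{ext}^\either(\Lambda)$ is bounded by $e^{2|\partial_{ext}\Lambda|}$ by a standard counting. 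Differentiating this recursion in $s$ and inserting \eqref{bound_derivative_partition_function} of $P(n-1)$ together with the bound on $\partial_s I_\gamma$ yields \eqref{bound_derivative_partition_function} at rank $n$, the $\beta\delta^d|\Lambda|$ and $|\partial_{ext}\Lambda|$ terms coming respectively from the bulk and the boundary.

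\textbf{Main obstacle.} The crux is the $\tau$-stability of the class-$n$ truncated weights: one must show that the cut-off exactly compensates the smallness of the recessive partition function in the ratio, i.e. that the extensive-looking gap $\beta\delta^d a_{n-1}^\either|\Int_{\either^*}\gamma|$ is genuinely of surface order and is beaten by the Peierls gain. This rests on the geometric estimate $|\Int_{\either^*}\gamma|^{(d-1)/d}\le C|\overline{\gamma}|$ (the contour encloses its interior, so $\partial_{ext}\Int_{\either^*}\gamma\subset\overline{\gamma}$) holding with a sharp enough constant, and on the surface corrections produced at each level of the recursion not accumulating — which is exactly why the precise constant $2$ in front of $|\partial_{ext}\Lambda|$ must be propagated through the induction rather than some unspecified finite constant. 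A secondary difficulty is bookkeeping: the derivative statements require control of $\partial_s\widehat\psi_n^\either$ uniform in $n$, extracted from the differentiated cluster expansion, so the weight and partition-function derivative bounds must be carried through the induction in lockstep with the bounds themselves.
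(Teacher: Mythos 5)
Your overall architecture --- a simultaneous induction carrying the weight bounds and the partition-function bounds in lockstep, with Proposition \ref{prop_peierls} supplying the Peierls gain, a convergent cluster expansion supplying the lower bound on the recessive denominator $Z^{\either}_{\Int_{\either^*}\gamma}$, and the cut-off converting the pressure gap into a term of surface order --- is the same as the paper's, and your treatment of the class-$n$ weights and their derivatives is essentially the paper's argument. One technical point you gloss over: the cut-off of a class-$k$ contour is evaluated at the rank-$(k-1)$ truncated pressures, whereas the hypothesis of \eqref{implication_bound_a_n_to_stability} involves $a_n^\either$; so ``follows at once'' hides the comparison $|a_k^\either-a_n^\either|\le 2(\beta\delta^d)^{-1}e^{-\tau k^{\nicefrac{(d-1)}{d}}/2}$ (Lemma \ref{lemma_estimate_truncated_pressure_difference}), which is precisely why the threshold is $\nicefrac{\rho_0}{16}$ in the hypothesis but $\nicefrac{\rho_0}{8}$ in the cut-off. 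Also, the paper obtains \eqref{bound_derivative_partition_function} not by differentiating the contour recursion but by writing $\partial_z Z_\Lambda^\either=(-|\Lambda|\delta^d+z^{-1}E_{P_\Lambda^\either}(N_{\widehat{\Lambda}}))Z_\Lambda^\either$ and bounding the mean number of points via the Donsker--Varadhan entropy inequality; your route is not wrong in principle but is considerably heavier.

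The genuine gap is in your proof of \eqref{bound_partition_function}. Once you upgrade $g_\either^{|\Lambda_{ext}|}$ and the interior partition functions to the dominant rate $e^{\beta\delta^d\widehat{\psi}_n|\cdot|}$, what remains is $\sum_{\Gamma\in\mathcal{C}_{ext}^\either(\Lambda)}\prod_{\gamma\in\Gamma}e^{-(\beta\rho_0-C)|\overline{\gamma}|}$, and no ``standard counting'' bounds this by $e^{2|\partial_{ext}\Lambda|}$: a gas of contours with exponentially small but nonzero activities has strictly positive pressure, so this sum is of order $e^{\epsilon(\beta)|\Lambda|}$ --- a volume term. A volume error here is fatal to the induction, since inserting the bound on $Z^{\either^*}_{\Int_{\either^*}\gamma}$ into the weight of an enclosing contour produces a factor $e^{\epsilon|\Int_{\either^*}\gamma|}$, and $|\Int\gamma|$ scales like $|\overline{\gamma}|^{\nicefrac{d}{(d-1)}}$, destroying $\tau$-stability. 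The entropy of the contour gas must not be discarded but cancelled against the pressure deficit of the recessive boundary condition: the paper splits the external contours into stable ones, resummed into $Z^\either_{\Lambda_{ext},stable}$ whose pressure satisfies $\widehat{\psi}_n-\widehat{\psi}^\either_{n,stable}\ge \nicefrac{a_n^\either}{2}$, and unstable ones, for which it introduces the auxiliary polymer model with weights $w^*_\gamma=e^{-(\tau-5)|\overline{\gamma}|}$ supported on unstable contours; because unstable contours are large ($|\overline{\gamma}|\ge(\nicefrac{\rho_0}{16 a_n^\either\delta^d})^{d-1}$), the pressure $g^*$ of this auxiliary gas is at most $\nicefrac{a_n^\either}{2}$, so $e^{-\beta\delta^d(\widehat{\psi}_n-\widehat{\psi}^\either_{n,stable})|\Lambda_{ext}|}$ absorbs the volume part of $\Phi^*_\Lambda$ and only its boundary term $e^{|\partial_{ext}\Lambda|}$ survives. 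This stable/unstable decomposition is the core of the Zahradnik argument and is where the proposition is won or lost; your sketch locates the crux in the weight stability instead, but that step is comparatively routine once \eqref{bound_partition_function} is available.
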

	
	This proposition is the key tool for applying the PSZ theory and is very similar to Proposition 7.34 in \cite{Velenik} with some adaptations to the present setting. Due to these small adaptations and also to be self-contained, the proof of the proposition is given in the Annex B.

	\subsection{Proof of Proposition \ref{prop_first_order_transition}} \label{subsection4.6}
	
	According to Proposition \ref{prop_tau_stability_truncated_weights} we know that the weights are $\tau$-stable with $\tau$ as large as we want provided that $\beta$ is large enough. So for $\beta$ large enough, for any $\Lambda \subset \Z^d$ we can define 
	\begin{equation*}
		\widehat{\Phi}^\either(\Lambda) := \sum_{\Gamma \in \mathcal{C}^\either(\Lambda)} \prod_{\gamma \in \Gamma} \widehat{w}^\either_\gamma
	\end{equation*}
	and based on standard cluster expansion results, recalled in Annex A, it is also possible to have a polymer development of $\log(\widehat{\Phi}^\either(\Lambda))$. In particular by Theorem \ref{theorem_cluster_expansion} we have
	\begin{align*}
		\widehat{\Phi}^\either(\Lambda) = e^{\beta \delta^d f^\either |\Lambda| + \Delta^\either(\Lambda)}	
	\end{align*}
	where $f^\either$ and $\Delta^\either_\Lambda$ are $\mathcal{C}^1$ in $U_\beta$. Furthermore, according to Theorem \ref{theorem_cluster_expansion} we also have
	\begin{align*}
		&|f^\either| \leq  \eta(\tau, l_0),&  &|\Delta^\either_\Lambda| \leq \eta(\tau, l_0) |\partial_{ext} \Lambda | \\
		&\left| \frac{\partial f^\either}{\partial s} \right| \leq D \eta(\tau, l_0),&  &\left| \frac{\partial \Delta^\either_\Lambda}{\partial s} \right| \leq D \eta(\tau, l_0) |\partial_{ext} \Lambda|,
	\end{align*}
	where $D>0$ comes from Proposition \ref{prop_tau_stability_truncated_weights} and the constant $\eta(\tau, l_0)$, used intensively in Annex A, is equal to $2e^{- \nicefrac{\tau l_0}{3}}$, where $l_0$ is the smallest size of a non-empty contour (it is a positive integer). Therefore the truncated partition function with boundary $\either$ is given by
	\begin{align}\label{eq_Z_cluster}
		\widehat{Z}_\Lambda^\either := g_\either^{|\Lambda|} \widehat{\Phi}^\either (\Lambda) = e^{\beta \delta^d ( \widehat{\psi}_0^\either + f^\either)|\Lambda| + \Delta^\either(\Lambda) }
	\end{align}
	and the truncated pressure associated to the boundary $\either$ is 
	\begin{align*}
		\widehat{\psi}^\either := \widehat{\psi}_0^\either + f^\either.
	\end{align*}
	We can also obtain $\widehat{\psi}^\either$ as the limit of the sequence of truncated pressure $(\widehat{\psi}_n^\either)_{n \in \N}$ when $n$ is going to infinity.  So we define the truncated pressure as $\widehat{\psi} : = \max\{ \widehat{\psi}^{(0)}, \widehat{\psi}^{(1)}\}$ and we have that $a^\either : = \widehat{\psi} - \widehat{\psi}^\either = \lim_{n \to \infty} a_n^\either$. 
	
	Similarly to the statement \eqref{implication_bound_a_n_to_stability} in Proposition \ref{prop_tau_stability_truncated_weights}, for any contour $\gamma$ we have the following implication 
	\begin{equation*}
		a^\either \delta^d |\Int \gamma |^{\frac{1}{d}} \leq \frac{\rho_0}{16} \implies \widehat{w}_\gamma^\either = w_\gamma^\either.
	\end{equation*}
	Therefore when $a^\either =0$ all the truncated weights are equal to the actual weight of the model and thus for all $\Lambda \subset \Z^d$ we have $\widehat{Z}_\Lambda^\either = Z_\Lambda^\either$ and with Lemma \ref{lem_pression_free_boundary}  we have
	\begin{equation*}
		\psi = \widehat{\psi} = \max \{ \widehat{\psi}^{(1)}, \widehat{\psi}^{(0)}\}.
	\end{equation*}
	Therefore we can extract properties of the pressure from the study of the truncated pressures. For $s \in U_\beta$ let us define the gap $G(s)$ between the two truncated pressures
	\begin{equation*}
		G(s):= \widehat{\psi}^{(1)} - \widehat{\psi}^{(0)} = (s-1) + \frac{\ln(1-e^{-s\beta \delta^d})}{\beta \delta^d} + f^{(1)} - f^{(0)}.
	\end{equation*}
	 Previously, we have set $a =\min \{\frac{2}{1-r_1}, e^{-c\beta} \}$ with the condition that $c>0$. We are going to be more precise and take $c < \frac{1}{6} \rho_0 l_0$. Hence for sufficiently large $\beta$ we have 
	\begin{align*}
		G(s_\beta^{-}) = -\frac{a}{\delta^d} + f^{(1)} - f^{(0)} \leq -\frac{a}{\delta^d} + 2 \eta(\tau, l_0) < 0 \\
		G(s_\beta^{+}) = \frac{a}{\delta^d} + f^{(1)} - f^{(0)} \geq \frac{a}{\delta^d} - 2 \eta(\tau, l_0) > 0
	\end{align*}
	where $s_\beta^{-} = \frac{\ln(1+e^{\beta \delta^d - a})}{\beta \delta^d}$ and $s_\beta^{+} = \frac{\ln(1+e^{\beta \delta^d + a})}{\beta \delta^d}$. Furthermore, for $s \in U_\beta$  and for sufficiently large $\beta$ we have 
	\begin{align*}
		\frac{\partial G}{\partial s}(s)  = \frac{1}{e^{s\beta \delta^d}-1} + \frac{\partial f^{(1)}}{\partial s} - \frac{\partial f^{(0)}}{\partial s} > 1 - 2D\eta(\tau, l_0) > 0.
	\end{align*}
	This ensures the existence of an unique $s_\beta^c \in U_\beta$ such that 
	\begin{align*}
		\widehat{\psi} = \begin{cases}
			\widehat{\psi}^{(0)} & \text{ when } s \in [s_\beta^{-}, s_\beta^c] \\
			\widehat{\psi}^{(1)} & \text{ when } s \in [s_\beta^c, s_\beta^{+}]
		\end{cases}
	\end{align*}
	and also for all $s \in U_\beta$ (and thus also for $s_\beta^c$) we have
	\begin{equation}\label{ineq_derivee_pression}
		\frac{\partial \widehat{\psi}^{(1)}}{\partial s}(s) > \frac{\partial \widehat{\psi}^{(0)}}{\partial s}(s). 
	\end{equation}
	Furthermore, we can observe that 
	\begin{align*}
		z_\beta^- - \beta := \beta s_\beta^- - \beta = -\frac{a}{ \delta^d }  + \frac{1}{\delta^d}\ln(1 + e^{-\beta \delta^d + a }) = -\frac{a}{\delta^d } + o(a(\beta)) \\ 
		z_\beta^+ - \beta := \beta s_\beta^+ - \beta = \frac{a}{ \delta^d }  + \frac{1}{\delta^d}\ln(1 + e^{-\beta \delta^d - a }) = \frac{a}{\delta^d } + o(a(\beta)) 
	\end{align*}
	Therefore we have that $|z_\beta^c
	- \beta | = O(e^{-c\beta})$  when $\beta$ tends to infinity where $z_\beta^c := \beta s_\beta^c$ and $0<c<\nicefrac{\rho_0 l_0}{6}$. This last part proves the statement in Theorem \ref{thm_quermass_transition} about the exponential decay of the difference $|\beta - z_\beta^c|$.  
	\newline
	
	For all $n\geq1$, we define the empirical field $\overline{P}_{\Lambda_n}^\either$ as the probability measure on $\Omega$ such that for any positive test function $f$
	\begin{equation*}
		\int f(\omega) \overline{P}^\either_{\Lambda_n}(d\omega) = \frac{1}{|\widehat{\Lambda}_n| } \int_{\widehat{\Lambda}_n} \int f(\tau_u(\omega)) \widehat{P}^\either_{\Lambda_n}(d\omega)du \quad \text{where } \widehat{P}_{\Lambda_n}^\either = \bigotimes_{i \in \Z^d} P_{\tau_{2ni}(\Lambda_n)}^\either . 
	\end{equation*}
	The empirical field can be seen as a stationarization of $ P^\either_{\Lambda_n}$ and therefore any accumulation point of the sequence $ (\overline{P}_{\Lambda_n}^\either)_{n \in \N}$ is necessarily stationary. Furthermore, we know that the Quermass interaction is stable and has a finite range interaction. Therefore there exists a sub-sequence of the empirical field $(\overline{P}_{\Lambda_n}^\either)_{n \in \N}$ that converges to $P^\either$  for the local convergence topology defined as the smallest topology on the space of probability measures on $\Omega$ such that for all tame functions $f$ (i.e. $f : \Omega \rightarrow \R$ such that $\exists \Delta \subset \R^d$ bounded, $\exists A \geq 0,  \forall \omega \in \Omega, f(\omega)= f(\omega_\Delta)$ and $|f| \leq A(1+N_\Delta)$) the map $P \rightarrow \int fdP$ is continuous. Moreover this limit is a Gibbs measure as it satisfies the DLR equations. The proof of these statements is similar to the proof of Theorem 1 in \cite{MiniCours}. For all $\beta \geq \beta_0$ and for $z = z_\beta^c$ we have by a direct computation 
	\begin{equation}\label{eq_derivee_fct_partition_1}
		\frac{\partial}{\partial z} \log \widehat{Z}_{\Lambda_n}^\either = \frac{\partial}{\partial z} \log Z_{\Lambda_n}^\either = - \delta^d |{\Lambda_n}| + \frac{1}{z}E_{P_{\Lambda_n}^\either}(N_{\widehat{\Lambda}_n}).
	\end{equation}
	On the other hand we know using \eqref{eq_Z_cluster} that 
	\begin{equation}\label{eq_derivee_fct_partition_2}
		\frac{\partial}{\partial z} \log \widehat{Z}_{\Lambda_n}^\either  = \beta |\Lambda_n| \delta^d \frac{\partial \widehat{\Psi}^\either}{\partial z} + \frac{\partial \Delta^\either_{\Lambda_n}}{\partial z}.
	\end{equation}
	By combining \eqref{eq_derivee_fct_partition_1} and \eqref{eq_derivee_fct_partition_2} we get the following relationship
	\begin{equation*}
		\frac{E_{P_{\Lambda_n}^\either}(N_{\widehat{\Lambda}_n})}{|\Lambda_n| \delta^d} = z + z\beta\frac{\partial \psi^\either}{\partial z}(z_\beta^c) + \frac{z}{|\Lambda_n| \delta^d} \frac{\partial \Delta^\either_{\Lambda_n}}{\partial z}.
	\end{equation*}
	Furthermore, using Theorem \ref{theorem_cluster_expansion} we know that
	\begin{equation*}
		\left| \frac{\partial \Delta^\either_{\Lambda_n}}{\partial z} \right| \leq D \eta(\tau, l_0) |\partial_{ext} \Lambda_n | \implies \frac{1}{|\Lambda_n|} \frac{\partial \Delta^\either_{\Lambda_n}}{\partial z} \underset{n \to \infty}{\rightarrow} 0.
	\end{equation*}
	The empirical field is stationary and using the local convergence we have
	\begin{equation*}
		\frac{E_{\overline{P}_{\Lambda_n}^\either}(N_{\widehat{\Lambda}_n})}{|\Lambda_n| \delta^d} = E_{\overline{P}_{\Lambda_n}^\either}(N_{[0,1]^d}) \underset{n \to \infty}{\rightarrow} \rho(P^\either).
	\end{equation*}
	On the other hand, we have by construction of the empirical field that 
	\begin{equation*}
		E_{\overline{P}_{\Lambda_n}^\either}(N_{\widehat{\Lambda}_n}) =  	E_{P_{\Lambda_n}^\either}(N_{\widehat{\Lambda}_n}). 
	\end{equation*}
	In summary, when we take limit when $n$ goes to infinity we have that 
	\begin{equation*}
		\rho(P^\either) = z + z \beta \frac{\partial \psi^\either}{\partial z}(z_\beta^c)
	\end{equation*}
	and therefore using \eqref{ineq_derivee_pression}  we have that  $\rho(P^{(1)}) > \rho(P^{(0)})$. 
	
	\section*{Acknowledgement}
	
	This work was supported in part by the Labex CEMPI (ANR-11-LABX-0007-01), the ANR projects PPPP (ANR-16-CE40-0016) and RANDOM (ANR-19-CE24-0014) and by the CNRS GdR 3477 GeoSto.
	
	\stopcontents[sections]
	
	\section{Annex A : Cluster Expansion } \label{Annex1}

	We consider a collection of contours $\mathcal{C}$ and for all $\Lambda \subset \Z^d$, $\mathcal{C}(\Lambda)$ a sub-collection of contours $\gamma$ such that $\overline{\gamma} \subset \Lambda$. For each contour $\gamma$ in such collection there are weights $w_\gamma$ invariant by translation. We set $l_0 = \min \{ |\overline{\gamma}|, \gamma \in  \mathcal{C} \}$ and $\eta (\tau, l_0) = 2 \exp(-\nicefrac{\tau l_0}{3})$. A set of contours $\Gamma = \{ \gamma_1, \dots, \gamma_n\} \in \mathcal{C}$, is said to be geometrically compatible if for all $i,j \in \{1,\dots, n\}, i \neq j$ we have $d_\infty(\gamma_i, \gamma_j)\geq 1$. We define the polymer development associated to those weights as, for all $\Lambda \subset \Z^d$
	\begin{align*}
		\Phi(\Lambda) = \sum_{\substack{\Gamma \in \mathcal{C}(\Lambda) \\ \text{geometrically compatible}}} \prod_{\gamma \in \Gamma} w_\gamma.
	\end{align*}
	A collection $C=\{\gamma_1, \cdots, \gamma_n \}$ is said to be decomposable if the support $\overline{C} = \bigcup_{\gamma \in C} \overline{\gamma}$ is not simply connected. A cluster, denoted by $X$, is a non-decomposable finite multiset of contours such that a same contour can appear multiple times and we define $\overline{X}: = \bigcup_{\gamma \in X} \overline{\gamma}$. The cluster expansion for $\ln \Phi(\Lambda)$, if it converges, is given by 
	\begin{equation*}
		\ln \Phi(\Lambda) = \sum_{\substack{X : \overline{X} \subset \Lambda}} \Psi(X)
	\end{equation*}
	where $\Psi(X) := \alpha(X) \prod_{\gamma \in X} w_\gamma$. We have this combinatorial term $\alpha$ whose expression is given by $$ \alpha(X)= \left\{\prod_{\gamma \in \mathcal{C}(\Lambda)} \frac{1}{n_X(\gamma)!} \right\} \left\{ \sum_{\substack{G \subset G_n \\ \text{connected} }}  \prod_{\{i,j\} \in G} \zeta(\gamma_i, \gamma_j) \right\} $$ where $n_X(\gamma)$ is the number of times a contour $\gamma$ appears in the cluster $X$, $G_n = (V_n, E_n)$ is an undirected complete graph on $V_n = \{1, \cdots, n\}$ and $\zeta$ is defined as$$ \zeta(\gamma, \gamma') = \begin{cases}
		0 & \text{if } \gamma, \gamma' \text{ are geometrically compatible} \\
		-1 & \text{otherwise}
	\end{cases}.$$ 
	Under the assumption that the weights are $\tau$-stable, a sufficient condition for the convergence of the cluster expansion is 
	\begin{equation*}
		\sum_{\substack{\gamma \in \mathcal{C} \\ 0 \in \overline{\gamma}}} e^{-\tau |\overline{\gamma}|} e^{3^d |\overline{\gamma}|} \leq 1.
	\end{equation*}
	In practice we will take a larger value for $\tau$ such that the stronger assumption of Lemma \ref{lem_condition_convergence_derivative_serie} is verified. The following lemmas correspond to Lemma 7.30 and Lemma 7.31 in \cite{Velenik}.
	
	\begin{lemma}\label{lem_condition_convergence_derivative_serie}
		There exists $\tau_0 >0$ such that when $\tau > \tau_0$
		\begin{equation}
			\sum_{\gamma \in \mathcal{C} : 0 \in \overline{\gamma}} |\overline{\gamma}|^{\nicefrac{d}{d-1}} e^{-(\nicefrac{\tau}{2}-1) |\overline{\gamma}|} e^{3^d |\overline{\gamma}|} \leq \eta(\tau, l_0) \leq 1.
		\end{equation}
	\end{lemma}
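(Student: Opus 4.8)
The plan is to organize the sum by the cardinality $m = |\overline{\gamma}|$ of the support and to dominate the (at most exponential) number of contours of a given size by the exponential decay coming from $\tau$. First I would establish a purely combinatorial counting bound: there is a constant $K = K(d) \ge 1$ such that the number of contours $\gamma \in \mathcal{C}$ with $0 \in \overline{\gamma}$ and $|\overline{\gamma}| = m$ is at most $K^m$. This rests on two standard facts. The support $\overline{\gamma}$ is a connected subset of the graph $G$ (adjacency $\norm{i-j}_\infty = 1$, hence maximal degree $3^d - 1$), so the number of admissible supports of size $m$ containing the origin is bounded by $c_d^m$ for a dimensional constant $c_d$ (the classical lattice-animal bound for connected subgraphs of a bounded-degree graph through a fixed vertex). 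Each such support carries a spin labeling $(\either_j)_{j \in \overline{\gamma}} \in \{0,1\}^{\overline{\gamma}}$, contributing a further factor $2^m$. Setting $K := 2 c_d$ gives the claim.

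Next I would insert this into the series. Writing $S(\tau)$ for the left-hand side and grouping the terms by $m$, the counting bound together with $e^{-(\tau/2 - 1)m} = e^{m} e^{-\tau m/2}$ yields
\[
  S(\tau) \;\le\; \sum_{m \ge l_0} K^m\, m^{\frac{d}{d-1}}\, e^{(1 + 3^d)m}\, e^{-\tau m/2}.
\]
The polynomial factor is harmless: for any $\epsilon > 0$ there is $C_\epsilon$ with $m^{d/(d-1)} \le C_\epsilon e^{\epsilon m}$ for all $m \ge 1$ (a fixed power is dominated by any exponential). Collecting the $m$-linear constants into $b := \ln K + 1 + 3^d + \epsilon$, one gets $S(\tau) \le C_\epsilon \sum_{m \ge l_0} e^{(b - \tau/2)m}$, a convergent geometric series as soon as $\tau > 2b$.

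Finally I would extract the factor $\eta(\tau, l_0) = 2 e^{-\tau l_0/3}$. Splitting $e^{-\tau m/2} = e^{-\tau m /3}\, e^{-\tau m/6}$ and using $e^{-\tau m/3} \le e^{-\tau l_0/3}$ for $m \ge l_0$, the sum is bounded by
\[
  S(\tau) \;\le\; C_\epsilon\, e^{-\tau l_0/3} \sum_{m \ge l_0} e^{(b - \tau/6)m}.
\]
For $\tau > 6b$ the remaining series converges, and as $\tau \to \infty$ its value tends to $0$; hence the whole prefactor $C_\epsilon \sum_{m \ge l_0} e^{(b-\tau/6)m}$ is $\le 2$ for $\tau$ large, which gives $S(\tau) \le 2 e^{-\tau l_0/3} = \eta(\tau, l_0)$. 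The remaining inequality $\eta(\tau, l_0) \le 1$ holds once $\tau \ge 3\ln 2 / l_0$, since $l_0 \ge 1$. Taking $\tau_0$ to be the maximum of the thresholds $6b$, the one forcing the prefactor below $2$, and $3\ln 2/l_0$ then yields the statement for all $\tau > \tau_0$.

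I expect the only genuinely delicate point to be the combinatorial counting step, namely pinning down the lattice-animal constant $c_d$ for the $\norm{\cdot}_\infty$-connectivity graph and confirming that spin labelings only cost a further $2^m$; everything downstream is an elementary domination of a polynomial-times-exponential factor by the $\tau$-decay, together with the bookkeeping needed to choose $\tau_0$ so that all the required inequalities hold simultaneously.
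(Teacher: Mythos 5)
Your proof is correct. The paper does not actually prove this lemma itself — it defers to Lemma 7.30 in the Friedli--Velenik reference — and your argument (a lattice-animal bound $c_d^m$ for connected supports through the origin times $2^m$ labelings, followed by domination of the polynomial factor and the splitting $e^{-\tau m/2}=e^{-\tau m/3}e^{-\tau m/6}$ to extract $\eta(\tau,l_0)$) is precisely the standard argument behind that cited result, so it matches the intended proof.
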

	
	\begin{lemma}\label{lemma_cluster_expansion}
		Let us assume that the weights are $\tau$-stable for $\tau > \tau_0$. Then for $L \geq l_0$
		\begin{equation}
			\sum_{\substack{X : 0 \in \overline{X} \\ |\overline{X}| \geq L} } |\Psi(X)| \leq e^{-\frac{\tau L}{2}}.
		\end{equation}
	\end{lemma}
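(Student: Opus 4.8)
The plan is to prove the tail estimate by \emph{reserving half} of the exponential decay granted by $\tau$-stability in order to manufacture the factor $e^{-\tau L/2}$, and then to recognise what is left as an ordinary convergent cluster series whose total mass is at most $1$. First I would record two elementary facts about a cluster $X$. Writing $m(X) := \sum_{\gamma \in X}|\overline{\gamma}|$ for the total support size counted with multiplicity, subadditivity of the cardinality of a union gives
\begin{equation*}
    |\overline{X}| = \Bigl|\bigcup_{\gamma \in X}\overline{\gamma}\Bigr| \le m(X).
\end{equation*}
On the other hand $\tau$-stability ($w_\gamma \le e^{-\tau|\overline{\gamma}|}$, the weights here being nonnegative) yields $\prod_{\gamma\in X}|w_\gamma| \le e^{-\tau m(X)}$.

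Using the factorisation $\Psi(X) = \alpha(X)\prod_{\gamma\in X}w_\gamma$ I would then split the decay into two equal halves,
\begin{equation*}
    |\Psi(X)| = |\alpha(X)|\prod_{\gamma\in X}|w_\gamma| \le |\alpha(X)|\, e^{-\frac{\tau}{2}m(X)}\, e^{-\frac{\tau}{2}m(X)}.
\end{equation*}
On the event $\{|\overline{X}|\ge L\}$ the first factor is controlled by $e^{-\frac{\tau}{2}m(X)} \le e^{-\frac{\tau}{2}|\overline{X}|} \le e^{-\frac{\tau}{2}L}$, which I pull out of the sum. This reduces the lemma to the uniform bound
\begin{equation*}
    \sum_{X \,:\, 0\in\overline{X}} |\alpha(X)| \prod_{\gamma\in X} e^{-\frac{\tau}{2}|\overline{\gamma}|} \le 1.
\end{equation*}

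The left-hand side is precisely the absolute cluster series attached to the modified weights $\tilde{w}_\gamma := e^{-\frac{\tau}{2}|\overline{\gamma}|}$, which are nonnegative, translation invariant, and $\frac{\tau}{2}$-stable. To invoke the cluster-expansion convergence estimate recalled above it suffices to verify its sufficient condition for these modified weights, namely $\sum_{\gamma:\,0\in\overline{\gamma}}\tilde{w}_\gamma\, e^{3^d|\overline{\gamma}|}\le 1$. Since $1 \le |\overline{\gamma}|^{\nicefrac{d}{d-1}} e^{|\overline{\gamma}|}$, one has $\tilde{w}_\gamma \le |\overline{\gamma}|^{\nicefrac{d}{d-1}} e^{-(\nicefrac{\tau}{2}-1)|\overline{\gamma}|}$, so Lemma~\ref{lem_condition_convergence_derivative_serie} applies verbatim and gives
\begin{equation*}
    \sum_{\gamma:\,0\in\overline{\gamma}} \tilde{w}_\gamma\, e^{3^d|\overline{\gamma}|} \le \sum_{\gamma:\,0\in\overline{\gamma}} |\overline{\gamma}|^{\nicefrac{d}{d-1}} e^{-(\nicefrac{\tau}{2}-1)|\overline{\gamma}|} e^{3^d|\overline{\gamma}|} \le \eta(\tau, l_0) \le 1.
\end{equation*}
The standard cluster-expansion estimate then bounds the origin cluster series by this same single-contour sum, hence by $1$, closing the argument.

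The one point that genuinely requires care — and the reason Lemma~\ref{lem_condition_convergence_derivative_serie} is phrased with the shifted exponent $\nicefrac{\tau}{2}-1$ in place of $\tau$ — is exactly the bookkeeping of the previous step: once half of the decay is spent to produce $e^{-\tau L/2}$, only $\tfrac{\tau}{2}$ worth of stability remains to secure convergence of the residual series, and one must check that this reduced stability still overcomes the geometric overhead $e^{3^d|\overline{\gamma}|}$ coming from the thickening of contour supports that governs incompatibility. Everything else is routine: the factorisation of $\Psi(X)$, the subadditivity bound $|\overline{X}|\le m(X)$, and the identification of the residual sum with a bona fide cluster series for the weights $\tilde{w}_\gamma$. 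The hypothesis $\tau>\tau_0$ serves only to make Lemma~\ref{lem_condition_convergence_derivative_serie} applicable, while $L\ge l_0$ merely ensures the summation range is nontrivial and is not otherwise used.
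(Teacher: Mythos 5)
Your proof is correct and follows essentially the same route the paper takes, which is simply to invoke Lemma~7.31 of \cite{Velenik}: reserve half of the $\tau$-decay to extract $e^{-\tau L/2}$ via $|\overline{X}|\le\sum_{\gamma\in X}|\overline{\gamma}|$, and bound the residual series for the halved weights by the convergence criterion of Lemma~\ref{lem_condition_convergence_derivative_serie}. The paper gives no independent argument, so your write-up is a faithful reconstruction of the standard proof it cites.
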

	
	Finally we have the following theorem which plays a crucial role in the proofs of Proposition \ref{prop_first_order_transition} and \ref{prop_tau_stability_truncated_weights}.
	
	\begin{theorem}\label{theorem_cluster_expansion}
		Assume that, for all $\gamma \in \mathcal{C}$, the weight $w_{\gamma}$ is $\mathcal{C}^1$ in a parameter $s\in (a,b)$ and that uniformly on $(a,b)$, 
		\begin{equation}
			w_{\gamma} \leq e^{-\tau |\overline{\gamma}|}, \quad \quad \left| \frac{ d w_{\gamma}}{d s} \right| \leq D|\overline{\gamma}|^{\nicefrac{d}{d-1}} e^{-\tau |\overline{\gamma}|},
		\end{equation}
		where $D \geq 1$ is a constant. Then there exists $\tau_1 = \tau_1(D, d) < \infty$ such that the following holds. If $\tau > \tau_1$, the pressure $g$ is given by the following absolutely convergent series, 
		\begin{equation}
			g = \sum_{X : 0 \in \overline{X}} \frac{1}{|\overline{X}|} \Psi(X)
		\end{equation}
		where the sum is over clusters $X$ made of contours $\gamma \in \mathcal{C}$ and $\overline{X} = \bigcup_{\gamma \in X} \overline{\gamma}$. Moreover,
		\begin{equation*}
			|g| \leq \eta(\tau, l_0) \leq 1
		\end{equation*}
		and for all $\Lambda \subset \Z^d$ finite, $g$ provides the volume contribution to $\log \Phi(\Lambda)$, in the sense that 
		\begin{equation}
			\Phi(\Lambda) = \exp(g|\Lambda| + \Delta_\Lambda)
		\end{equation}
		where $\Delta_\Lambda$ is a boundary term :
		\begin{equation}
			|\Delta_\Lambda| \leq \eta(\tau, l_0)|\partial_{ext} \Lambda|.
		\end{equation}
		Finally, $g$ and $\Delta_\Lambda$ are also $\mathcal{C}^1$ in $s \in (a,b)$; its derivative equals 
		\begin{equation}
			\frac{dg}{ds} = \sum_{X : 0 \in \overline{X}} \frac{1}{|\overline{X}|} \frac{d\Psi(X)}{ds}
		\end{equation}
		and 
		\begin{equation}
			\left| \frac{dg}{ds}\right| \leq D \eta(\tau, l_0), \quad \quad \left| \frac{d\Delta_\Lambda}{ds}\right| \leq D \eta(\tau, l_0) |\partial_{ext} \Lambda|.
		\end{equation}
		
	\end{theorem}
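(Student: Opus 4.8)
The plan is to treat this as a standard consequence of the convergent polymer (cluster) expansion, with the only genuine work being the uniform-in-$s$ control of the differentiated series. First I would fix $\tau_1=\tau_1(D,d)$ large enough that the hypotheses of Lemmas \ref{lem_condition_convergence_derivative_serie} and \ref{lemma_cluster_expansion} hold, so in particular the strengthened summability
\begin{equation*}
\sum_{\gamma\,:\,0\in\overline{\gamma}} |\overline{\gamma}|^{\nicefrac{d}{d-1}}\, e^{-(\nicefrac{\tau}{2}-1)|\overline{\gamma}|}\, e^{3^d|\overline{\gamma}|} \leq \eta(\tau,l_0)
\end{equation*}
is available. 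Granting $\tau>\tau_1$, the expansion $\ln\Phi(\Lambda)=\sum_{X:\overline{X}\subset\Lambda}\Psi(X)$ converges absolutely; since the weights $w_\gamma$ are translation invariant, so is $\Psi$, which lets me define $g:=\sum_{X:0\in\overline{X}}|\overline{X}|^{-1}\Psi(X)$ and bound $|g|\leq\sum_{X:0\in\overline{X}}|\Psi(X)|\leq e^{-\nicefrac{\tau l_0}{2}}\leq\eta(\tau,l_0)$ by Lemma \ref{lemma_cluster_expansion} with $L=l_0$.

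The separation into a bulk and a boundary contribution I would obtain by the usual resummation. Using translation invariance,
\begin{equation*}
g|\Lambda| = \sum_{i\in\Lambda}\sum_{X:\,i\in\overline{X}}\frac{\Psi(X)}{|\overline{X}|} = \sum_{X}\frac{|\overline{X}\cap\Lambda|}{|\overline{X}|}\Psi(X),
\end{equation*}
so that, the clusters with $\overline{X}\subset\Lambda$ cancelling exactly,
\begin{equation*}
\Delta_\Lambda := \ln\Phi(\Lambda)-g|\Lambda| = -\sum_{\substack{X:\,\overline{X}\cap\Lambda\neq\emptyset\\ \overline{X}\not\subset\Lambda}}\frac{|\overline{X}\cap\Lambda|}{|\overline{X}|}\Psi(X).
\end{equation*}
Because a cluster has connected support, every $X$ surviving in this last sum must reach from $\Lambda$ into $\Lambda^c$ and hence meets the boundary region $\partial_{ext}\Lambda$; anchoring each such $X$ at one of its boundary sites and applying Lemma \ref{lemma_cluster_expansion} once more gives $|\Delta_\Lambda|\leq\sum_{i\in\partial_{ext}\Lambda}\sum_{X:i\in\overline{X}}|\Psi(X)|\leq\eta(\tau,l_0)|\partial_{ext}\Lambda|$, which is the claimed boundary estimate and yields $\Phi(\Lambda)=\exp(g|\Lambda|+\Delta_\Lambda)$.

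For the $\mathcal{C}^1$ regularity I would differentiate the series term by term. Writing $\Psi(X)=\alpha(X)\prod_{\gamma\in X}w_\gamma$ and using the product rule, each term contributes $\alpha(X)\sum_{\gamma\in X}\frac{dw_\gamma}{ds}\prod_{\gamma'\neq\gamma}w_{\gamma'}$, which the hypotheses bound by $D\,|\alpha(X)|\bigl(\sum_{\gamma\in X}|\overline{\gamma}|^{\nicefrac{d}{d-1}}\bigr)\prod_{\gamma'\in X}e^{-\tau|\overline{\gamma'}|}$. The extra polynomial factor is precisely what the strengthened summability of Lemma \ref{lem_condition_convergence_derivative_serie} absorbs, so the differentiated series converges uniformly on $(a,b)$; by the standard theorem on differentiation of a uniformly convergent series this legitimises $\frac{dg}{ds}=\sum_{X:0\in\overline{X}}|\overline{X}|^{-1}\frac{d\Psi(X)}{ds}$ and the analogous formula for $\Delta_\Lambda$, and repeating the two anchoring arguments on the differentiated weights gives $\left|\frac{dg}{ds}\right|\leq D\eta(\tau,l_0)$ and $\left|\frac{d\Delta_\Lambda}{ds}\right|\leq D\eta(\tau,l_0)|\partial_{ext}\Lambda|$.

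The main obstacle I anticipate is exactly this last step: securing the uniform-in-$s$ convergence of the differentiated cluster series. One must control simultaneously the combinatorial weight $\alpha(X)$ (through the connected-subgraph sum defining it), the exponential decay $e^{-\tau|\overline{\gamma}|}$, and the polynomial loss $|\overline{\gamma}|^{\nicefrac{d}{d-1}}$ coming from the derivative bound, all while $w_\gamma$ may vanish so that a naive logarithmic derivative is unavailable. Choosing $\tau_1=\tau_1(D,d)$ large enough that both the entropy factor $e^{3^d|\overline{\gamma}|}$ in the convergence criterion and the polynomial prefactor are dominated by a fraction of the exponential decay is the crux; once this is in place, the remaining estimates are routine applications of Lemma \ref{lemma_cluster_expansion}.
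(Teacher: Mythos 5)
Your proposal is correct and follows essentially the same route as the paper, which simply invokes Theorem 7.29 of \cite{Velenik} for everything except the bound on $\nicefrac{d\Delta_\Lambda}{ds}$ and then establishes that bound by the same anchoring-at-$\partial_{ext}\Lambda$ argument combined with domination of $\left|\nicefrac{d\Psi(X)}{ds}\right|$ by a cluster built from the modified weights $\overline{w}_\gamma = D|\overline{\gamma}|^{\nicefrac{d}{d-1}}e^{-(\tau-1)|\overline{\gamma}|}$, whose summability is exactly what Lemma \ref{lem_condition_convergence_derivative_serie} provides. You correctly identify the crux (absorbing the polynomial prefactor and the sum over $\gamma\in X$ into a product, at the cost of the $e^{|\overline{\gamma}|}$ accounted for in the $(\nicefrac{\tau}{2}-1)$ exponent of that lemma), so there is nothing substantive to add.
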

	
	This theorem is similar to Theorem 7.29 in \cite{Velenik}, the only difference being is that the following statement is not included $$ \left|\frac{d \Delta_\Lambda}{d s} \right| \leq D \eta(\tau, l_0) |\partial_{ext} \Lambda|.$$ Following the computations in \cite{Velenik} we find 
	\begin{equation*}
		\frac{d \Delta_\Lambda}{d s} = \sum_{i \in \Lambda} \sum_{X : i \in \overline{X} \not\subset \Lambda} \frac{1}{|\overline{X}|} \frac{d\Psi}{d s}(X).
	\end{equation*}
	Whenever $i \in \overline{X} \not\subset \Lambda$ we know that $\overline{X} \cap \partial_{ext} \Lambda \neq \emptyset$ and since the weights are invariant by translation we have 
	\begin{align*}
		\left|\frac{d \Delta_\Lambda}{d s}\right| \leq |\partial_{ext} \Lambda| \max_{j \in \partial_{ext} \Lambda} \sum_{X : j \in \overline{X}} \left|\frac{d\Psi}{d s}(X) \right| = |\partial_{ext} \Lambda| \sum_{X : 0 \in \overline{X}} \left|\frac{d\Psi}{d s}(X) \right|.
	\end{align*}
	We can show that for any cluster $X$ we have 
	\begin{equation}
		\left|\frac{d \Psi}{d s} (X)\right| \leq |\overline{\Psi}(X)|
	\end{equation}
	where 
	\begin{equation*}
		\overline{\Psi}(X) = \alpha(X) \prod_{\gamma \in X} \overline{w}_\gamma \quad \text{and} \quad \overline{w}_\gamma = D|\overline{\gamma}|^{\nicefrac{d}{d-1}} e^{-(\tau - 1 )|\overline{\gamma}|}.
	\end{equation*}
	With classical results on cluster expansion we can show that 
	\begin{equation*}
		\sum_{X : 0 \in \overline{X}} |	\overline{\Psi}(X)| \leq  \sum_{\gamma \in \mathcal{C} : 0 \in \overline{\gamma}} \overline{w}_\gamma e^{3^d |\overline{\gamma}|}.
	\end{equation*}
	Therefore with Lemma \ref{lem_condition_convergence_derivative_serie} we have the desired control on the derivative of the boundary term.

	\section{Annex B : Proof of Proposition \ref{prop_tau_stability_truncated_weights}} \label{Annex2}
	
	Before starting the proof by induction, we need to fix the some constants and in particular the quantity $\beta$ which has to be sufficiently large. Recall that $\eta(\tau, l_0) := 2e^{-\frac{\tau l_0}{3}}$, where $l_0$ is the minimum size of non empty contour. We set $D := (2+2K)\delta^d \beta  +4\beta + 4 \delta^d \| \kappa' \| + C_1\beta \delta^d + C_2 $ where $ C_1(\beta) := e + \sup_{s \in U_\beta} |\nicefrac{\widehat{\psi}_0^\either}{s}|$ and $ C_2(\beta) := \sup_{s \in U_\beta} \nicefrac{1}{s}$ and we choose $\beta \geq 1$ sufficiently large such that
	\begin{align}
		&\tau > \tau_0(d)  \text{ where $\tau_0$ is defined as in Lemma \ref{lem_condition_convergence_derivative_serie}.} \label{condition_beta_1}\\
		&D \eta(\tau, l_0) \leq 1 \label{condition_beta_2}\\
		&\forall k \in \N, \; 2 \beta^{-1} k^{\nicefrac{1}{d}} \exp{(-\frac{\tau k^{\nicefrac{d-1}{d}}}{2})} \leq \frac{\rho_0 }{16} \label{condition_beta_3}\\
		&\forall x>0, \; \beta^{-1} \delta^{-d} \exp{(-\max\{(\nicefrac{\rho_0 }{16 \delta^d x})^{d-1}, l_0 \} \frac{\tau}{2})} \leq \frac{x}{2}. \label{condition_beta_5}
	\end{align}
	Let us prove the proposition for $n=0$. Let $\overline{\gamma}$ be a contour of class 0. For $s \in U_\beta$ and using the upper bound on $I_{\gamma}$ in Proposition \ref{prop_peierls} we have 
	\begin{align*}
		\widehat{w}_{\gamma}^\either \leq e^{-\tau |\overline{\gamma}|}.
	\end{align*}
	For the derivative we have 
	\begin{equation*}
		\frac{\partial \widehat{w}_\gamma^\either}{\partial z} = \left( -\frac{|\overline{\gamma}|}{g_\either} \frac{\partial g_\either}{\partial z} I_{\gamma} + \frac{\partial I_{\gamma}}{\partial z}\right) g_\either^{-|\overline{\gamma}|}. 
	\end{equation*}
	Therefore using the upper bounds in Proposition \ref{prop_peierls} 
	\begin{equation}\label{ineq_derivative_weights_class0}
		\left| \frac{\partial \widehat{w}_\gamma^\either}{\partial s}  \right| \leq  (2 + 2K )\beta \delta^d |\overline{\gamma}| e^{-(\beta \rho_0 - 2) |\overline{\gamma}|} < D |\overline{\gamma}|^{\nicefrac{d}{d-1}} e^{-\tau |\overline{\gamma}|}.
	\end{equation}
	For the bound on the partition function with $|\Lambda| = 1$ we have
	\begin{equation*}
		Z_\Lambda^\either = g_\either \leq e^{\beta \delta^d \widehat{\psi}_0}.
	\end{equation*}
	Finally concerning the derivative of the partition function with respect to $s$ for sufficiently large $\beta$ we obtain directly by computing that 
	\begin{equation*}
		\left|\frac{\partial Z_\Lambda^\either}{\partial s}\right| \leq \beta \delta^d e^{\beta \delta^d \widehat{\psi}_0} \leq C_1 \beta \delta^d e^{\beta \delta^d \widehat{\psi}_0}.
	\end{equation*}	
	Now we start the induction. Let us assume that the statements have been proven up to $n$. We have to prove that they hold also for $n+1$. 
	
	Since all the contours appearing in $\widehat{\Phi}_n^{\either}$ are at most of class $n$, by the induction hypothesis, all these weights are $\tau$-stable and satisfied the  bound \eqref{bound_truncated_weight_derivative} on the derivatives. Therefore using Theorem \ref{theorem_cluster_expansion} we get 
	\begin{align} \label{bound_terme_perturbatif_pression}
		\left| \frac{\partial f_n^\either}{\partial s} \right| \leq \frac{D \eta(\tau, l_0)}{\beta \delta^d} \leq 1 
	\end{align}
	where $f_n^\either$ appears in $\widehat{\psi}_n^\either = \widehat{\psi}_0^\either + f_n^\either$ and can also be defined as 
	\begin{align*}
		f_n^\either := \lim_{k \rightarrow \infty} \frac{1}{\beta \delta^d |\Lambda_k|} \ln\Phi_n^{\either}(\Lambda_k).
	\end{align*}
	Before proceeding properly into the induction we need the following lemma. 
	
	\begin{lemma}\label{lemma_estimate_truncated_pressure_difference}
		For $n \ge 1$, if  all contours of class at most $n$ the truncated weights are $\tau$-stable then for any $k\le n$
		
		\begin{equation}
			|\widehat{\psi}^\either_{n} - \widehat{\psi}^\either_{k}| \leq \frac{1}{\beta \delta^d} e^{-\frac{\tau}{2}k^{\nicefrac{d-1}{d}}} \quad \text{and } \quad |\widehat{\psi}_{n} - \widehat{\psi}_{k}| \leq \frac{1}{\beta \delta^d} e^{-\frac{\tau}{2}k^{\nicefrac{d-1}{d}}}.
		\end{equation}
	\end{lemma}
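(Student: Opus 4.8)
The plan is to write the difference of truncated pressures through the cluster expansion of Theorem \ref{theorem_cluster_expansion} and to observe that the expansions at ranks $n$ and $k$ differ only through clusters containing a contour of class $>k$; by an isoperimetric argument such a contour has a large support, so Lemma \ref{lemma_cluster_expansion} controls the remaining sum. The extra factor $\nicefrac{1}{\beta\delta^d}$ in the statement is exactly the normalization relating the truncated pressure to the cluster pressure of Annex A.

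Since by hypothesis every contour of class at most $n$ is $\tau$-stable and $\tau>\tau_0$ by \eqref{condition_beta_1}, Theorem \ref{theorem_cluster_expansion} applies to the truncated weights $\widehat{w}_\gamma^\either$ restricted to $\mathcal{C}_m^\either$ for each $m\le n$, giving $\widehat{\psi}_m^\either = \widehat{\psi}_0^\either + f_m^\either$ with $\beta\delta^d f_m^\either = \sum_{X:\,0\in\overline{X}} \frac{1}{|\overline{X}|}\Psi(X)$, the sum running over clusters whose contours all have class at most $m$. The key point is that the truncated weight of a contour of class $m$ is built only from the rank-$(m-1)$ truncated pressures, hence the weight of any contour of class at most $k$ is identical in the constructions at ranks $n$ and $k$; consequently every cluster made solely of contours of class at most $k$ contributes equally to both expansions and cancels (and $\mathcal{C}_k^\either\subset\mathcal{C}_n^\either$, so no rank-$k$ cluster is left over). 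This leaves
\begin{equation*}
\beta\delta^d\bigl(\widehat{\psi}_n^\either - \widehat{\psi}_k^\either\bigr) = \sum_{\substack{X:\,0\in\overline{X},\ X\subset\mathcal{C}_n^\either \\ X\ \text{has a contour of class}\,>k}} \frac{1}{|\overline{X}|}\,\Psi(X).
\end{equation*}

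Next I would bound the support of any such cluster. If $X$ contains a contour $\gamma_0$ of class $>k$, then $|\Int\gamma_0|\ge k+1$; since $\partial_v(\Int\gamma_0)\subseteq\overline{\gamma_0}$, the vertex-boundary isoperimetric inequality on $\Z^d$, in the form $|\Int\gamma_0|\le|\overline{\gamma_0}|^{d/(d-1)}$, gives $|\overline{\gamma_0}|\ge(k+1)^{(d-1)/d}>k^{(d-1)/d}$, while $\gamma_0$ non-empty gives $|\overline{\gamma_0}|\ge l_0$. As $\overline{\gamma_0}\subseteq\overline{X}$, we obtain $|\overline{X}|\ge L:=\max\{k^{(d-1)/d},l_0\}$. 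Bounding $\nicefrac{1}{|\overline{X}|}\le 1$ and enlarging the index set to all clusters of $\mathcal{C}_n^\either$ with $0\in\overline{X}$ and $|\overline{X}|\ge L$, Lemma \ref{lemma_cluster_expansion} (applicable since $L\ge l_0$) gives
\begin{equation*}
\beta\delta^d\,\bigl|\widehat{\psi}_n^\either - \widehat{\psi}_k^\either\bigr| \le \sum_{\substack{X:\,0\in\overline{X}\\ |\overline{X}|\ge L}} |\Psi(X)| \le e^{-\frac{\tau}{2}L} \le e^{-\frac{\tau}{2}k^{(d-1)/d}},
\end{equation*}
which is the first bound. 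The bound for $\widehat{\psi}_n-\widehat{\psi}_k$ then follows from $\widehat{\psi}_m=\max(\widehat{\psi}_m^0,\widehat{\psi}_m^1)$ together with $|\max(a_0,a_1)-\max(b_0,b_1)|\le\max_\either|a_\either-b_\either|$.

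The main obstacle is the geometric estimate $|\overline{\gamma}|\ge|\Int\gamma|^{(d-1)/d}$: one must verify that the support $\overline{\gamma}$, made of the non-correct sites separating the labelled interior from the exterior, genuinely dominates the vertex boundary of $\Int\gamma$, and that the implied isoperimetric constant is at least $1$ so that the clean exponent $k^{(d-1)/d}$ appears without a spurious prefactor (any residual constant could otherwise be absorbed, at the cost of a weaker $\tau$). The rest is cluster-expansion bookkeeping, where the only delicate points are the exact matching of the class-$\le k$ weights across the two ranks (ensuring cancellation) and the uniform validity of the hypotheses of Lemma \ref{lemma_cluster_expansion} on $U_\beta$, both of which follow from the construction and from the choice of $\beta$ via \eqref{condition_beta_1}.
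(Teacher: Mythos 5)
Your proof is correct and follows essentially the same route as the paper: both write $\widehat{\psi}^\either_n-\widehat{\psi}^\either_k=f_n^\either-f_k^\either$ as a difference of convergent cluster expansions, observe that the common clusters (those built solely from contours of class at most $k$, whose truncated weights are fixed once and for all and hence identical at both ranks) cancel, use the isoperimetric bound $|\overline{\gamma}|\ge|\Int\gamma|^{\nicefrac{(d-1)}{d}}$ to force $|\overline{X}|\ge k^{\nicefrac{(d-1)}{d}}$ for every surviving cluster, and conclude via Lemma \ref{lemma_cluster_expansion}. The only cosmetic difference is the second inequality, where your one-line bound $|\max(a_0,a_1)-\max(b_0,b_1)|\le\max_\either|a_\either-b_\either|$ replaces the paper's case analysis based on the monotonicity of $(\widehat{\psi}^\either_m)_m$; both are valid.
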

	
	\begin{proof}
		We have $|\widehat{\psi}^\either_{n} - \widehat{\psi}^\either_{k}| = |f_{n}^\either - f_k^\either|$. Since all contour of class at most $n$ are $\tau$-stable we know that the cluster expansion for $f_k^\either$ converges for $k \leq n$. We can notice that the clusters $X$ that contributes to $f_{n}^\either - f_k^\either$ must have at least one contour $\gamma$ of class greater than $k$ and thus by the isoperimetric inequality $|\overline{\gamma}| \geq k^{\nicefrac{d-1}{d}}$ which in turn implies that $|\overline{X}| \geq k^{\nicefrac{d-1}{d}}$ and thus by Lemma \ref{lemma_cluster_expansion}
		\begin{equation}\label{eq_inegalite_difference_pression_troncquee_ordre_k_n+1}
			|\widehat{\psi}^\either_{n} - \widehat{\psi}^\either_{k}| \leq \frac{1}{\beta \delta^d} \sum_{\substack{X : 0 \in \overline{X} \\ |\overline{X}| \geq k^{\nicefrac{d-1}{d}}} } |\widehat{\Psi}^\either(X)| \leq \frac{1}{\beta \delta^d} e^{-\frac{\tau}{2}k^{\nicefrac{d-1}{d}}}.
		\end{equation}
		Now we want to have the same kind of estimate for $|\widehat{\psi}_{n} - \widehat{\psi}_{k}|$. If $\widehat{\psi}_{n} = \widehat{\psi}_{n}^\either$ and $\widehat{\psi}_{k} = \widehat{\psi}_{k}^\either$, we get the same estimate since the difference is the same. In the case where $\widehat{\psi}_{n} = \widehat{\psi}_{n}^\either$ and $\widehat{\psi}_{k} = \widehat{\psi}_{k}^{\either^*}$, where $\either \neq \either^*$, we have on one side
		\begin{equation*}
			\widehat{\psi}_{n} - \widehat{\psi}_k = \widehat{\psi}_{n}^\either -\widehat{\psi}_{k}^\either +\widehat{\psi}_{k}^\either - \widehat{\psi}_{k}^{\either^*} \leq \widehat{\psi}_{n}^\either - \widehat{\psi}_{k}^\either
		\end{equation*}
		since by definition $\widehat{\psi}_{k}^\either - \widehat{\psi}_{k}^{\either^*} = \widehat{\psi}_{k}^\either - \widehat{\psi}_{k} \leq 0$. On the other side, we have 
		\begin{equation*}
			\widehat{\psi}_{n} - \widehat{\psi}_k = \widehat{\psi}_{n}^\either -\widehat{\psi}_{n}^{\either^*} +\widehat{\psi}_{n}^{\either^*} - \widehat{\psi}_{k}^{\either^*} \geq 0
		\end{equation*} 
		since $(\widehat{\psi}_{i}^{\either^*})_{i \in \N}$ is increasing and $\widehat{\psi}_{n}^\either -\widehat{\psi}_{n}^{\either^*} \geq 0$ by definition. In any case, we obtain
		\begin{equation*}
			|\widehat{\psi}_{n} - \widehat{\psi}_k | \leq \frac{1}{\beta \delta^d} e^{-\frac{\tau}{2}k^{\nicefrac{d-1}{d}}}.
		\end{equation*}
	\end{proof}
	
	We move on the proof that \eqref{bound_partition_function} holds if $|\Lambda|= n+1$. Note that any contour that appears inside of $\Lambda$ is at most of class $k\leq n$. We say that a contour $\gamma$ is stable if 
	\begin{align*}
		a_n^\either \delta^d |\Int \gamma|^{\nicefrac{1}{d}} \leq \frac{\rho_0 }{16}.
	\end{align*}
	This property is hereditary, in the sense that for all contours $\gamma'$ that can appear inside $\Int \gamma $ are stable as well. Since, we know that all contours of class at most $n$ are $\tau$-stable we can apply Lemma \ref{lemma_estimate_truncated_pressure_difference} and by \eqref{condition_beta_3} we have for any contour $\gamma$ of class $k \leq n$
	\begin{align*}
		a_k^\either \delta^d |\Int \gamma |^{\nicefrac{1}{d}} & = a_{n}^\either \delta^d |\Int \gamma|^{\nicefrac{1}{d}} + (a_k^\either - a_{n}^\either) \delta^d |\Int \gamma|^{\nicefrac{1}{d}} \\
		& \leq a_{n}^\either \delta^d |\Int \gamma|^{\nicefrac{1}{d}} + 2\beta^{-1} k^{\nicefrac{1}{d}}e^{-\nicefrac{\tau k^{\nicefrac{d-1}{d}}}{2}} \\
		& \leq  a_{n}^\either \delta^d |\Int \gamma|^{\nicefrac{1}{d}} + \frac{\rho_0 }{16}.
	\end{align*}
	Therefore, when the contours are stable it implies that $a_k^\either \delta^d |\Int \gamma |^{\nicefrac{1}{d}} \leq \nicefrac{\rho_0 }{8}$ and thus  $\widehat{w}_{\gamma}^\either = w_{\gamma}^\either$.
	In contrast, we would call contours that doesn't satisfy this condition unstable. The stability of a contour depends on the parameter $s$ as it affects the value of $a_n^\either$. Thus we have two cases to consider. The first case is   $a_n^\either=0$. Consequently all contours are stable, therefore according to Theorem \ref{theorem_cluster_expansion} we have 
	\begin{align*}
		Z_\Lambda^\either & = \widehat{Z}_\Lambda^\either = e^{\beta \delta^d \widehat{\psi}_n^\either|\Lambda| \delta^d + \Delta} \\
		& \leq e^{\beta  \delta^d \widehat{\psi}_n^\either|\Lambda| + |\partial_{ext} \Lambda| }
	\end{align*}
	and \eqref{bound_partition_function} is proved. Now let us consider $a_n^\either >0$, in this case some contours must be unstable. Therefore we can partition the configurations that generate among the external contours those that are unstable
	\begin{align*}
		Z_\Lambda^\either & = \sum_{\substack{\Gamma \in \mathcal{C}_{ext}^\either(\Lambda) \\ \text{unstable}}} \int e^{-\beta H_\Lambda(\omega) } \mathbbm{1}_{\{\forall i \in \partial_{int} \Lambda, \sigma(\omega,i)=\either \}} \mathbbm{1}_{\{\Gamma \subset \Gamma_{ext}(\omega)\}} \Pi_{\widehat{\Lambda}}^{s\beta}(d\omega).\\
	\end{align*}
	Similar to what we did in \eqref{eq_decomposition_product_integrals}, we can write each integral as a product of integrals with respect to Poisson point process distribution on different domains using the properties \eqref{energy_local_gamma_1}, \eqref{energy_local_gamma_2} and \eqref{energy_local_gamma_3}. The only difference is that we do consider for the moment only unstable contours and so inside $\Lambda_{ext}$ we have to account for stable contours. Furthermore, these stable contours that cannot encircle any external unstable contour due to the hereditary property of stable contours. 
	\begin{align*}
		Z_\Lambda^\either = \sum_{\substack{\Gamma \in \mathcal{C}_{ext}^\either(\Lambda) \\ \text{unstable}}} Z_{\Lambda_{ext}, stable}^\either \prod_{\gamma \in \Gamma} I_{\gamma} Z_{\Int_0 \gamma}^{(0)} Z_{\Int_1 \gamma}^{(1)},
	\end{align*}
	where $Z_{\Lambda_{ext}, stable}^\either$ denotes the partition function restricted to configurations for which all contours are stable and by construction they are of class at most $n$. Since all those contours are of class at most $n$ they are also $\tau$-stable therefore they can be studied using a convergent cluster expansion according to Theorem \ref{theorem_cluster_expansion} and thus
	\begin{align*}
		Z_{\Lambda_{ext}, stable}^\either & = g_\either^{|\Lambda_{ext}|} \widehat{\Phi}_{ n, stable}^\either(\Lambda_{ext}) \\
		& \leq g_\either^{|\Lambda_{ext}|} e^{\beta \delta^d f_{n, stable}^\either |\Lambda_{ext}| + |\partial_{ext} \Lambda_{ext}|} \\
		& \leq e^{\beta \delta^d (\widehat{\psi}_0^\either +f_{n,stable}^\either) |\Lambda_{ext}| + |\partial_{ext} \Lambda_{ext}|}
	\end{align*}
	where $f_{n,stable}^\either =\lim_{k \to \infty} \frac{1}{\beta |\Lambda_k| \delta^d} \ln\widehat{\Phi}_{n, stable}^\either(\Lambda_{ext})$. According to the induction hypothesis we have that 
	\begin{align*}
		Z_{\Int_0 \gamma}^{(0)} Z_{\Int_1 \gamma}^{(1)} & \leq e^{\beta \delta^d \widehat{\psi}_n|\Int \gamma|} e^{2(|\partial_{ext} \Int_1 \gamma| + |\partial_{ext} \Int_0 \gamma|)} \\
		& \leq e^{\beta \delta^d \widehat{\psi}_n|\Int \gamma|} e^{2|\overline{\gamma}|}.
	\end{align*}
	For any $\Gamma \in \mathcal{C}_{ext}^\either(\Lambda)$ we have $|\partial_{ext} \Lambda_{ext}| \leq |\partial_{ext} \Lambda| + \sum_{\gamma \in \Gamma} |\overline{\gamma}|$, thus we get that
	\begin{align*}
		Z_\Lambda^\either & \leq \sum_{\substack{\Gamma \in \mathcal{C}_{ext}^\either(\Lambda) \\ \text{unstable}}} e^{\beta \delta^d (\widehat{\psi}_0^\either +f_{n,stable}^\either) |\Lambda_{ext}|} e^{ |\partial_{ext} \Lambda| + \sum_{\gamma \in \Gamma} |\overline{\gamma}|} \prod_{\gamma \in \Gamma} I_{\gamma} e^{\beta \delta^d \widehat{\psi}_n |\Int \gamma|} e^{2|\overline{\gamma}|} \\
		& \leq e^{\beta \delta^d \widehat{\psi}_n |\Lambda| + |\partial_{ext} \Lambda|} \sum_{\substack{\Gamma \in \mathcal{C}_{ext}^\either(\Lambda) \\ \text{unstable}}} e^{-\beta \delta^d (\widehat{\psi}_n - \widehat{\psi}_{n, stable}^\either) |\Lambda_{ext}|} \prod_{\gamma \in \Gamma} I_{\gamma} e^{(3-\beta \delta^d \widehat{\psi}_n)|\overline{\gamma}| }
	\end{align*}
	where we define $\widehat{\psi}_{n, stable}^\either :=  \widehat{\psi}_0^\either + f_{n, stable}^\either$. Furthermore when we use the following inequalities $\widehat{\psi}_n \geq \widehat{\psi}_n^\either \geq \widehat{\psi}_0^\either = \frac{\ln(g_\either)}{\beta \delta^d}$ we get that $I_{\gamma} e^{-\beta \delta^d \widehat{\psi}_n |\overline{\gamma}|} \leq e^{-\tau |\overline{\gamma}|}$. Therefore we have
	\begin{align*}
		Z_\Lambda^\either & \leq e^{\beta \delta^d \widehat{\psi}_n |\Lambda| + |\partial_{ext} \Lambda|} \sum_{\substack{\Gamma \in \mathcal{C}_{ext}^\either(\Lambda) \\ \text{unstable}}} e^{-\beta \delta^d (\widehat{\psi}_n - \widehat{\psi}_{n, stable}^\either) |\Lambda_{ext}|} \prod_{\gamma \in \Gamma} e^{-(\tau - 3) |\overline{\gamma}|}.
	\end{align*}
	It remains to prove that the sum is bounded by $e^{|\partial_{ext} \Lambda|}$.  First we note that $\widehat{\psi}_n - \widehat{\psi}_{n, stable}^\either = a_n^\either + f_n^\either - f_{n, stable}^\either$. By construction, the clusters that appear in the cluster expansion of $f_n^\either - f_{n, stable}^\either$ contain at least one unstable contour $\gamma$. Therefore	
	\begin{align*}
		|\overline{\gamma}| \geq |\Int \gamma|^{\nicefrac{d-1}{d}} \geq \left( \frac{\rho_0 }{16 a_n^\either \delta^d}\right)^{(d-1)}
	\end{align*}	
	and by Lemma \ref{lemma_cluster_expansion} and \eqref{condition_beta_5}	
	\begin{align}
		&|f_n^\either - f_{n, stable}^\either| \leq \beta^{-1} \delta^{-d} \exp{(-\max\{(\nicefrac{\rho_0 }{16 a_n^\either \delta^d})^{d-1}, l_0 \} \frac{\tau}{2})} \leq \frac{a_n^\either}{2}.\label{toto}
	\end{align}	
	At the end we obtain
	\begin{align} \label{eq_borne_inf_psi_n_psi_n_stable}
		\widehat{\psi}_n - \widehat{\psi}_{n, \text{stable}}^\either \geq \frac{a_n^\either}{2}.
	\end{align}
	Now let us define new weights $w_\gamma^*$ as follow
	\begin{align*}
		w_{\gamma}^* = \left\{ 
		\begin{aligned}
			&e^{-(\tau-5)|\overline{\gamma}|} &\quad \text{if $\gamma$ is unstable} \\
			&0 & \quad \text{otherwise}.
		\end{aligned}
		\right.
	\end{align*}	
	We denote by $\Phi^*$ the associated polymer development and have
	\begin{equation*} 
		g^* = \lim_{k \rightarrow \infty} \frac{1}{\beta \delta^d |\Lambda_k|} \ln \Phi^*(\Lambda_k).
	\end{equation*}
	For sufficiently large $\beta$ we can assure by Theorem \ref{theorem_cluster_expansion} that it is a convergent cluster expansion. Since all contours that contribute to $g^*$ are all unstable, we obtain an inequality similar to \eqref{toto}
	\begin{align}\label{eq_borne_inf_g_star}
		|g^*| \leq \beta^{-1} \delta^{-d} \exp{(-\max\{(\nicefrac{\rho_0 }{16 a_n^\either \delta^d})^{d-1}, l_0 \} \frac{\tau}{2})} \leq \frac{a_n^\either}{2}.
	\end{align}
	Therefore with \eqref{eq_borne_inf_psi_n_psi_n_stable} and \eqref{eq_borne_inf_g_star} we have $ \widehat{\psi}_n - \widehat{\psi}_n^\either \geq g^*$ and thus
	\begin{align*}
		\sum_{\substack{\Gamma \in \mathcal{C}_{ext}^\either(\Lambda) \\ \text{unstable}}} e^{-\beta \delta^d (\widehat{\psi}_n - \widehat{\psi}_{n, stable}^\either) |\Lambda_{ext}|} \prod_{\gamma \in \Gamma} e^{-(\tau - 3) |\overline{\gamma}|} & \leq  \sum_{\substack{\Gamma \in \mathcal{C}_{ext}^\either(\Lambda) \\ \text{unstable}}} e^{-\beta \delta^d g^* |\Lambda_{ext}|} \prod_{\gamma \in \Gamma} e^{-(\tau - 3) |\overline{\gamma}|}\\
		& \leq e^{-\beta \delta^d g^* |\Lambda|} \sum_{\substack{\Gamma \in \mathcal{C}_{ext}^\either(\Lambda) \\ \text{unstable}}} \prod_{\gamma \in \Gamma} e^{-(\tau - 3) |\overline{\gamma}|} e^{\beta \delta^d g^* (|\overline{\gamma}|+ |\Int \gamma|)}.
	\end{align*}
	By \eqref{eq_borne_inf_g_star} we know that $\beta \delta^d g^* \le 1$, and again with Theorem \ref{theorem_cluster_expansion} we know that $\Phi_{\Int \gamma}^* \geq e^{\beta \delta^d g^* |\Int \gamma| - |\overline{\gamma}|}$ and so	
	\begin{align*}
		\sum_{\substack{\Gamma \in \mathcal{C}_{ext}^\either(\Lambda) \\ \text{unstable}}} e^{-\beta \delta^d (\widehat{\psi}_n - \widehat{\psi}_{n, stable}^\either) |\Lambda_{ext}|} \prod_{\gamma \in \Gamma} e^{-(\tau - 3) |\overline{\gamma}|} & \leq e^{-\beta \delta^d g^* |\Lambda|} \sum_{\substack{\Gamma \in \mathcal{C}_{ext}^\either(\Lambda) \\ \text{unstable}}} \prod_{\gamma \in \Gamma} e^{-(\tau - 5) |\overline{\gamma}|} \Phi_{\Int \gamma}^* \\
		& = e^{-\beta \delta^d  g^* |\Lambda|} \Phi_{\Lambda}^* \\
		& \leq e^{|\partial_{ext} \Lambda|}.
	\end{align*}	
	In summary, we have 
	\begin{equation*}
		Z_\Lambda^\either  \leq e^{\beta \delta^d \widehat{\psi}_n |\Lambda| + 2|\partial_{ext} \Lambda|}
	\end{equation*} 
	which is exactly \eqref{bound_partition_function} in the case where $|\Lambda|=n+1$. If $|\Lambda|\le n$ it is sufficient to notice that $(\widehat{\psi}_n^\either)_{n \in \N}$ is increasing. 
	Let us now prove that \eqref{bound_derivative_partition_function} holds. We start with the case $|\Lambda| = n+1$ and by a similar argument it is true for any smaller $\Lambda$. By a direct computation we have 
	\begin{align*}
		\frac{\partial Z_\Lambda^\either}{\partial z} &= -|\Lambda| \delta^d Z_\Lambda^\either + \frac{1}{z} \int N_{\widehat{\Lambda}}(\omega) e^{-\beta H_\Lambda(\omega)} \mathbbm{1}_{\{\forall i \in \partial_{int} \Lambda, \sigma(\omega, i) = \either \}} \Pi_{\widehat{\Lambda}}^z(d\omega) \\
		&= -|\Lambda| \delta^d Z_\Lambda^\either + \frac{1}{z} E_{P_\Lambda^\either}(N_{\widehat{\Lambda}}) Z_\Lambda^\either.
	\end{align*}
	We can observe that for any configuration $\omega \in \Omega_f$ and for any contour $\gamma \in \Gamma(\omega)$ created by this configuration we have $H_{\overline{\gamma}}(\omega) \geq |\overline{\gamma}_1|\delta^d + \rho_0 |\overline{\gamma}| >0$, thus $H_\Lambda(\omega) \geq 0$. By Donsker-Varadhan inequality for the Kullback-Liebler divergence, denoted by $I(\cdot | \cdot)$, we have
	\begin{align}
		E_{P_\Lambda^\either}(N_{\widehat{\Lambda}}) &\leq I(P_\Lambda^\either | \Pi_{\widehat{\Lambda}}^z) + \ln E_{\Pi_{\widehat{\Lambda}}^z}(e^{N_{\widehat{\Lambda}}}) \nonumber \\
		& \leq \int -\beta H_\Lambda dP_\Lambda^\either - \ln Z_\Lambda^\either + (e-1)z \delta^d |\Lambda| \nonumber \\
		& \leq - \ln Z_\Lambda^\either + (e-1)z \delta^d |\Lambda|. \label{choupi}
	\end{align}
	Furthermore we know that the contours which appear in $|\Lambda|$ are at most of the class $n$. Therefore we know that their truncated weights are $\tau$-stable and by Theorem \ref{theorem_cluster_expansion}  
	\begin{equation}\label{eq_inegalite_inf_fonction_partition_par_cluster}
		Z_\Lambda^\either \geq \widehat{Z}_\Lambda^\either \geq e^{\beta \delta^d \widehat{\psi}_n^\either|\Lambda| - |\partial_{ext} \Lambda|}.
	\end{equation}	
	From inequalities \eqref{eq_inegalite_inf_fonction_partition_par_cluster} and \eqref{choupi} and by using the fact that $(\widehat{\psi}_n^\either)_{n \in \N}$ is increasing we obtain 	
	\begin{equation*}
		E_{P_\Lambda^\either}(N_{\widehat{\Lambda}}) \leq  \left((e-1)z - \beta \widehat{\psi}_0^\either\right) \delta^d |\Lambda| + |\partial_{ext} \Lambda|.
	\end{equation*}	
	At the end we have	
	\begin{equation*}
		\left| \frac{\partial Z_\Lambda^\either}{\partial s} \right| \leq \left[ \left( e + \left|\frac{\widehat{\psi}_0^\either}{s} \right| \right) \beta \delta^d |\Lambda| + \frac{1}{s} |\partial_{ext} \Lambda| \right] Z_\Lambda^\either.
	\end{equation*}
	Now using \eqref{bound_partition_function} we obtain  for $s \in U_\beta$ 
	\begin{equation*}
		\left| \frac{\partial Z_\Lambda^\either}{\partial s} \right| \leq \left( C_1 \beta \delta^d |\Lambda| + C_2 |\partial_{ext} \Lambda| \right) e^{\beta \delta^d \widehat{\psi}_n |\Lambda| + 2|\partial_{ext} \Lambda|}
	\end{equation*}
	and \eqref{bound_derivative_partition_function} is proved. 
	\newline 

	Now let us prove \eqref{truncated_weights_tau_stability} which is the $\tau$-stability of truncated weights for contours of class $n+1$. We consider a contour $\gamma$ of class $n+1$. First of all, we can observe that $\widehat{w}_{\gamma}^\either = 0$ whenever $(\widehat{\psi}_n^{\either^*} - \widehat{\psi}_n^\either) \delta^d |\Int_{\either^*} \gamma|^{\nicefrac{1}{d}} > \nicefrac{\rho_0 }{4}$. So we can assume that 
	\begin{align}
		(\widehat{\psi}_n^{\either^*} - \widehat{\psi}_n^\either) \delta^d  |\Int_{\either^*} \gamma|^{\nicefrac{1}{d}} \leq \frac{\rho_0}{4}. \label{eq_hypothesis_cutoff_value} 
	\end{align}
	Since $|\Int \gamma| = n+1$  we can apply the induction hypothesis on the partition functions that appears in the truncated weights particularly we can use \eqref{bound_partition_function} and have 
	\begin{align} \label{ineq_induction_hypothesis_partition_function}
		Z_{\Int_{\either^*} \gamma}^{\either^*} \leq e^{\beta \delta^d \widehat{\psi}_n |\Int_{\either^*} \gamma| + 2|\partial_{ext} \Int_{\either^*} \gamma|}.
	\end{align}
	By combining the previous inequalities \eqref{ineq_induction_hypothesis_partition_function} and \eqref{eq_inegalite_inf_fonction_partition_par_cluster} we have
	\begin{align*}
		\frac{Z_{\Int_{\either^*} \gamma}^{\either^*}}{Z_{\Int_{\either^*} \gamma}^{\either}} & \leq e^{\beta \delta^d (\widehat{\psi}_n - \widehat{\psi}_n^\either) |\Int_{\either^*} \gamma| + 3|\partial_{ext} \Int_{\either^*} \gamma|} \\
		& \leq e^{\beta \delta^d (\widehat{\psi}_n - \widehat{\psi}_n^\either) |\Int_{\either^*} \gamma|^{\nicefrac{1}{d}} |\Int_{\either^*} \gamma|^{\nicefrac{(d-1)}{d}} + 3|\partial_{ext} \Int_{\either^*} \gamma|}.
	\end{align*}
	Furthermore, applying  hypothesis \eqref{eq_hypothesis_cutoff_value} and the isoperimetric inequality  we have 
	\begin{align}
		\frac{Z_{\Int_{\either^*} \gamma}^{\either^*}}{Z_{\Int_{\either^*} \gamma}^{\either}} & \leq e^{ \nicefrac{\beta  \rho_0 }{4} |\Int_{\either^*} \gamma|^{\nicefrac{(d-1)}{d}} + 3|\partial_{ext} \Int_{\either^*} \gamma|}\nonumber \\
		& \leq e^{ (\frac{1}{4} \beta \rho_0  + 3) |\partial_{ext} \Int_{\either^*} \gamma|}. \label{ineq_upper_bound_ratio_partition_function}
	\end{align} 
	Therefore with Proposition \ref{prop_peierls}, the upper bound on the ratio of partition functions in inequality \eqref{ineq_upper_bound_ratio_partition_function} and the fact that $|\partial_{ext} \Int_{\either^*} \gamma| \leq |\overline{\gamma}|$ we have
	\begin{align*}
		\widehat{w}_{\gamma}^\either  &\leq e^{-(\beta \rho_0 - 2)|\overline{\gamma}|} e^{(\frac{1}{4} \beta \rho_0  + 3) |\overline{\gamma}|} \\
		&\leq e^{-(\frac{3}{4} \beta \rho_0  - 5)
			|\overline{\gamma}|} \\
		&\leq e^{-\tau |\overline{\gamma}|}
	\end{align*}
	and the weight $\widehat{w}_{\gamma}^\either$ is $\tau$-stable. 	Let us now show that \eqref{bound_truncated_weight_derivative} holds for a contour $\gamma$ of class $n+1$. Similar to the proof of \eqref{truncated_weights_tau_stability} we consider only the case when $(\widehat{\psi}_n^{\either^*} - \widehat{\psi}_n^\either) \delta^d |\Int_{\either^*} \gamma|^{\nicefrac{1}{d}} \leq \nicefrac{\rho_0 }{4}$. By a direct computation	
	\begin{align*}
		\frac{\partial \widehat{w}_{\gamma}^\either}{\partial z} = & \left( -\frac{|\overline{\gamma}|}{g_\either} \frac{\partial g_\either}{\partial z} I_{\gamma} + \frac{\partial I_{\gamma}}{\partial z}\right) \left(g_\either^{-|\overline{\gamma}|} \kappa
		\frac{Z^{\either^*}_{\Int_{\either^*} \gamma}}{Z^\either_{\Int_{\either^*} \gamma}}\right) + \frac{\partial}{\partial z} \left( \kappa
		\frac{Z^{\either^*}_{\Int_{\either^*} \gamma}}{Z^{\either}_{\Int_{\either^*} \gamma}}\right) g_\either^{-|\overline{\gamma}|} I_{\gamma}.
	\end{align*}
	The first term of the derivative can be bounded in a similar fashion as in \eqref{ineq_derivative_weights_class0} using the facts that $s \in U_\beta$ and by Proposition \ref{prop_peierls}
	\begin{equation}\label{ineq_bound_first_term_of_derivative}
		\left| -\frac{|\overline{\gamma}|}{g_\either} \frac{\partial g_\either}{\partial z} I_{\gamma} + \frac{\partial I_{\gamma}}{\partial z}\right| \left(g_\either^{-|\overline{\gamma}|} \kappa
		\frac{Z^{\either^*}_{\Int_{\either^*} \gamma}}{Z^\either_{\Int_{\either^*} \gamma}}\right) \leq (2+2K) \delta^d |\overline{\gamma}| e^{-\tau |\overline{\gamma}|}.
	\end{equation}	
	For the other term, the derivative of the cut-off function satisfies	
	\begin{align*}
		\left| \frac{\partial \kappa}{\partial z} \right| \leq \left( \left|\frac{\partial \widehat{\psi}_n^\either}{\partial z}\right| + \left|\frac{\partial \widehat{\psi}_n^{\either^*}}{\partial z}\right| \right) \delta^d |\Int_{\either^*} \gamma|^{\frac{1}{d}} \| \kappa' \|.
	\end{align*}
	For $s \in U_\beta$ and by \eqref{bound_terme_perturbatif_pression} we have 	
	\begin{equation*}
		\frac{\partial \psi_n^\either}{\partial z} = \frac{1}{\beta \delta^d g_\either} \frac{\partial g_\either}{\partial z } + \frac{1}{\beta} \frac{\partial f_n^\either}{\partial s} \implies  \left| \frac{\partial \psi_n^\either}{\partial z} \right| \leq \frac{2}{\beta}.
	\end{equation*}	
	Therefore with the isoperimetric inequality we have 
	\begin{equation}\label{ineq_bound_derivative_cutoff}
		\left| \frac{\partial \kappa}{\partial z} \right| \leq \frac{4}{\beta} \delta^d |\overline{\gamma}| \| \kappa' \|.
	\end{equation}
	For the derivative of the ratio of partition functions	\begin{equation*}
		\left| \frac{\partial}{\partial z} \frac{Z_{\Int_{\either^*}\gamma}^{\either^*}}{Z_{\Int_{\either^*}\gamma}^{\either}} \right| \leq \left|\frac{\nicefrac{\partial Z_{\Int_{\either^*}\gamma}^{\either^*}}{\partial z}}{Z_{\Int_{\either^*}\gamma}^{\either}} \right| + \left|\frac{\nicefrac{\partial Z_{\Int_{\either^*}\gamma}^{\either}}{\partial z}}{Z_{\Int_{\either^*}\gamma}^{\either}} \right| \frac{Z_{\Int_{\either^*}\gamma}^{\either^*}}{Z_{\Int_{\either^*}\gamma}^{\either}}.
	\end{equation*}
	Similarly to how we bounded the ratio of partition function and using \eqref{bound_derivative_partition_function} we have for $\either^* \in \{0,1\}$
	\begin{equation*}
		\left|\frac{\nicefrac{\partial Z_{\Int_{\either^*}\gamma}^{\either^*}}{\partial z}}{Z_{\Int_{\either^*}\gamma}^{\either}} \right| \leq \left(C_1 \delta^d |\Int_{\either^*} \gamma| + \frac{C_2}{\beta} |\partial_{ext}\Int_{\either^*} \gamma| \right) e^{(\nicefrac{1}{4}\beta \rho_0  +3)|\overline{\gamma}|}.
	\end{equation*}
	Therefore using the isoperimetric inequality and the fact that $|\partial_{ext}\Int_{\either^*} \gamma| \leq |\overline{\gamma}|$ we have
	\begin{align}\label{ineq_bound_derivative_partition_function_ratio}
		\left| \frac{\partial}{\partial z} \frac{Z_{\Int_{\either^*}\gamma}^{\either^*}}{Z_{\Int_{\either^*}\gamma}^{\either}} \right|  \leq \left( C_1 \delta^d + \frac{C_2}{\beta} \right)  |\overline{\gamma}|^{\nicefrac{d}{(d-1)}} e^{(\nicefrac{1}{2}\beta \rho_0  +6)|\overline{\gamma}|}.
	\end{align}
	When combining inequalities \eqref{ineq_bound_first_term_of_derivative}, \eqref{ineq_bound_derivative_cutoff}, \eqref{ineq_bound_derivative_partition_function_ratio} and Proposition \ref{prop_peierls} we have 
	\begin{align*}
		\left| \frac{\partial \widehat{w}_{\gamma}^\either}{\partial s} \right| \leq \left((2+2K)\delta^d \beta + 4 \delta^d \| \kappa' \| +  C_1\beta \delta^d + C_2 \right) |\overline{\gamma}|^{\nicefrac{d}{(d-1)}} e^{-\tau |\overline{\gamma}|}.
	\end{align*}
	To finish the proof let us show that \eqref{implication_bound_a_n_to_stability} holds at the order $n+1$. Since we have proven this far that the truncated weights of class at most $n+1$ are $\tau$-stable we can apply use Lemma \ref{lemma_estimate_truncated_pressure_difference}. Let $\gamma$ of class $n+1$ if we have $a_{n+1}^\either \delta^d |\Int \gamma|^{\nicefrac{1}{d}} \leq \frac{\rho_0 }{16}$ then by definition of truncated weights we would have $\widehat{w}_{\gamma}^\either = w_{\gamma}^\either$. Now let's consider a contour $\gamma$ of class $k\leq n$, according to Lemma \ref{lemma_estimate_truncated_pressure_difference} and \eqref{condition_beta_3} we have 
	\begin{align*}
		a_k^\either \delta^d |\Int \gamma|^{\nicefrac{1}{d}} & = a_{n+1}^\either \delta^d |\Int \gamma|^{\nicefrac{1}{d}} + (a_k^\either - a_{n+1}^\either) \delta^d |\Int \gamma|^{\nicefrac{1}{d}} \\
		& \leq a_{n+1}^\either \delta^d |\Int \gamma|^{\nicefrac{1}{d}} + 2\beta^{-1} k^{\nicefrac{1}{d}}e^{-\nicefrac{\tau k^{\nicefrac{d-1}{d}}}{2}} \\
		& \leq  a_{n+1}^\either \delta^d |\Int \gamma|^{\nicefrac{1}{d}} + \frac{\rho_0 }{16}.
	\end{align*}
	Therefore if $a_{n+1}^\either \delta^d |\Int \gamma|^{\nicefrac{1}{d}} \leq \nicefrac{\rho_0 }{16}$ it implies that $a_k^\either \delta^d |\Int \gamma|^{\nicefrac{1}{d}} \leq \nicefrac{\rho_0}{8}$ which in turn would imply $\widehat{w}_{\gamma}^\either = w_{\gamma}^\either$ by definition of the truncated weights.

	\printbibliography
	
\end{document}